\theoremstyle{plain}
\newtheorem{thm}{Theorem}[section]
\newtheorem{prop}[thm]{Proposition}
\newtheorem{lem}[thm]{Lemma}
\newtheorem{cor}[thm]{Corollary}
\newtheorem{prob}[thm]{Problem}
\newcommand{\trd }{\mathop{\rm trans.deg}\nolimits}
\newcommand{\nd }{\noindent}
\newcommand{\T }{{\rm T}}
\newcommand{\E }{{\rm E}}
\newcommand{\Aut }{\mathop{\rm Aut}\nolimits}
\newcommand{\supp }{\mathop{\rm supp}\nolimits}
\newcommand{\mdeg }{\mathop{\rm mdeg}\nolimits}
\newcommand{\degw}{\deg_{\w}}
\newcommand{\mdegw}{\mdeg_{\w}}
\newcommand{\w}{{\bf w}}
\newcommand{\vv}{{\bf v}}
\newcommand{\bu}{{\bf u}}
\newcommand{\degv}{\deg_{\vv}}
\newcommand{\id	}{{\rm id}}
\newcommand{\zs}{\{ 0\} }
\newcommand{\sm}{\setminus}
\newcommand{\C}{{\bf C}}
\newcommand{\R}{{\bf R}}
\newcommand{\Q}{{\bf Q}}
\newcommand{\N}{{\bf N}}
\newcommand{\Z}{{\bf Z}}
\newcommand{\e}{{\bf e}}
\newcommand{\Zn}{{\N _0}}
\newcommand{\Rn}{{\R _{\geq 0}}}
\newcommand{\x}{{\bf x}}
\newcommand{\kx}{k[{\bf x}]}
\begin{document}

\title{Weighted multidegrees 
of polynomial automorphisms over a domain}

\author{Shigeru Kuroda
\thanks{Partly supported by the Grant-in-Aid for 
Young Scientists (B) 24740022, 
Japan Society for the Promotion of Science. }}

\footnotetext{2010 {\it Mathematical Subject Classification}. 
Primary 14R10; Secondary 13F20. }

\date{}

\maketitle

\begin{abstract}
The notion of the weighted degree of a polynomial 
is a basic tool in Affine Algebraic Geometry. 
In this paper, 
we study the properties of the weighted multidegrees 
of polynomial automorphisms 
by a new approach which focuses on stable coordinates. 
We also present some applications of the generalized 
Shestakov-Umirbaev theory. 
\end{abstract}

\section{Introduction} 
\label{sect:intro}
\setcounter{equation}{0}

Throughout this paper, 
$k$ denotes an arbitrary domain unless otherwise stated. 
Let $\kx =k[x_1,\ldots ,x_n]$ 
be the polynomial ring in $n$ variables over $k$, 
where $n$ is a positive integer. 
The automorphism group $\Aut _k\kx $ 
of the $k$-algebra $\kx $ 
is a central object in Affine Algebraic Geometry. 
The purpose of this paper 
is to study the properties 
of the weighted multidegrees of elements of $\Aut _k\kx $.

Let $\Gamma $ be a 
{\it totally ordered additive group}, i.e., 
an additive group 
equipped with a total ordering such that 
$\alpha \leq \beta $ implies 
$\alpha +\gamma \leq \beta +\gamma $ 
for each $\alpha ,\beta ,\gamma \in \Gamma $. 
We denote 
$\Gamma _+=\{ \gamma \in \Gamma \mid \gamma >0\} $ and 
$\Gamma _{\geq 0}=\{ \gamma \in \Gamma \mid \gamma \geq 0\} $. 
Let $\w =(w_1,\ldots ,w_n)$ 
be an $n$-tuple of elements of $\Gamma $. 
We define the $\w $-{\it weighted $\Gamma $-grading} 
$$\kx =\bigoplus _{\gamma \in \Gamma }\kx _{\gamma }$$ 
by setting $\kx _{\gamma }$ 
to be the $k$-submodule of $\kx $ 
generated by $x_1^{a_1}\cdots x_n^{a_n}$ 
for $a_1,\ldots ,a_n\in \Zn $ 
with $\sum _{i=1}^na_iw_i=\gamma $ 
for each $\gamma \in \Gamma $. 
Here, 
$\Zn $ denotes the set of nonnegative integers. 
Write $f\in \kx \sm \zs $ as 
$f=\sum _{\gamma \in \Gamma }f_{\gamma }$, 
where $f_{\gamma }\in 
\kx _{\gamma }$ for each $\gamma \in \Gamma $. 
Then, 
we define the $\w $-{\it weighted degree} 
($\w $-{\it degree}, for short)
of $f$ by 
$$
\degw f=\max \{ \gamma \in \Gamma \mid f_{\gamma }\neq 0\} . 
$$
We define the $\w $-{\it weighted initial form} 
($\w $-{\it initial form}, for short) of $f$ 
by $f^{\w }=f_{\delta }$, 
where $\delta :=\deg _{\w }f$. 
When $f=0$, 
we define $f^{\w }=0$ and $\deg _{\w } f=-\infty $. 
Here, 
$-\infty $ is a symbol which is less than 
any element of $\Gamma $. 
To denote elements of $\Aut _k\kx $, 
we often use the notation 
$F=(f_1,\ldots ,f_n)$, $G=(g_1,\ldots ,g_n)$, 
etc, 
where each $f_i$ and $g_i$ 
represent the images of $x_i$ 
by $F$ and $G$, 
respectively. 
We define the $\w $-{\it weighted degree} 
and $\w $-{\it weighted multidegree} 
($\w $-{\it degree} and $\w $-{\it multidegree}, 
for short) of $F$ by 
$$
\degw F=\sum _{i=1}^n\degw f_i\quad
\text{and}\quad 
\mdegw F=(\degw f_1,\ldots ,\degw f_n), 
$$
respectively. 
When $\Gamma =\Z $ and $\w =(1,\ldots ,1)$, 
we denote ``$\degw $'' and ``$\mdegw $" 
simply by ``$\deg $" and ``$\mdeg $", 
respectively.

This paper consists of three parts. 
In the first part 
(Sections~\ref{sect:IP} through \ref{sect:approx}), 
we prove basic properties 
of the weighted degrees and multidegrees 
of elements of $\Aut _k\kx $. 
Take any $F\in \Aut _k\kx $ 
and $\emptyset \neq I\subset \{ 1,\ldots ,n\} $, 
and define $J$ to be the set of $1\leq j\leq n$ 
such that $f_j$ belongs to $k[\{ x_i\mid i\in I\} ]$, 
and $I_0$ to be the set of $i_0\in I$ 
such that $\degw f_j$ belongs to 
$\sum _{i\in I\sm \{ i_0\} }\Zn w_i$ 
for each $j\in J$. 
Here, 
for $N_i\subset \Z $ and $d_i\in \Gamma $ 
for $i=1,\ldots r$ with $r\geq 1$, 
we define 
$$
N_1d_1+\cdots +N_rd_r=\{ a_1d_1+\cdots +a_rd_r\mid 
a_i\in N_i\text{ for }i=1,\ldots ,r\} . 
$$
We note that $J=\{ 1,\ldots ,n\}$ 
if $I=\{ 1,\ldots ,n\}$, 
and $I_0=I$ if $J=\emptyset $.

With this notation, 
we have the following theorem.

\begin{thm}\label{thm:weighted degree}
Assume that $n\geq 1$ and $k$ is a domain. 
Then, 
for any 
$\w \in \Gamma ^n$, 
$F\in \Aut _k\kx $ 
and $\emptyset \neq I\subset \{ 1,\ldots ,n\} $, 
the following assertions hold.

\noindent{\rm (i)} 
We have either {\rm (a)} or {\rm (b)} as follows$:$

\noindent{\rm (a)} 
There exists a bijection $\sigma :J\to I$ 
such that $\degw f_j=w_{\sigma (j)}$ 
for each $j\in J$.

\noindent{\rm (b)} 
We have 
$\sum _{j\in J}\degw f_j>\sum _{i\in I}w_i$ 
or $\# I>\# J$. 
For each $\vv \in \Gamma ^n$, 
there exists $i\in I_0$ such that $x_i$ 
does not divide $(f_j^{\w })^{\vv }$ 
for any $j\in J$.

\noindent{\rm (ii)} 
Assume that $\# I>\# J$. 
Then, 
for each 
$f\in k[\{ f_j\mid j\in J\} ]\sm \zs $ 
and $\vv \in \Gamma ^n$, 
there exists $i\in I_0$ such that $x_i$ 
does not divide $(f^{\w })^{\vv }$. 
\end{thm}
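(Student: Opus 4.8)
The plan is to reduce to a single weight vector over a field, recast (ii) as the vanishing of one prime ideal of an ``initial algebra'', and then obtain that vanishing from (i) together with a transcendence-degree count in which the hypothesis $\#I>\#J$ is used. First, two harmless reductions. Passing from $k$ to its field of fractions changes neither $\degw$, nor $\w$-initial forms, nor the relation ``$x_i$ divides $(\,\cdot\,)$'', and carries $\Aut_k\kx$ into the automorphism group over the larger field, so I assume $k$ is a field. Next, fix $\vv$ and replace $\Gamma$ by $\Gamma\times\Gamma$ with the lexicographic order and $\w$ by $\w':=((w_1,v_1),\dots,(w_n,v_n))$; then $(h^{\w})^{\vv}=h^{\w'}$ for all $h$, while the set ``$I_0$'' formed from $\w'$ is contained in the one formed from $\w$ (the first coordinate of $\deg_{\w'}f_j$ equals $\degw f_j$, so any $\Zn$-combination of the $(w_i,v_i)$ with $i\in I\sm\{i_0\}$ that equals $\deg_{\w'}f_j$ projects to such a combination of the $w_i$ equal to $\degw f_j$). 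Since the dichotomy (a)/(b) for $\w$ does not involve $\vv$, and when $\w$ satisfies (b) so does $\w'$, it is enough to prove the $\vv$-free form of the theorem — with ``$(f_j^{\w})^{\vv}$'' and ``$(f^{\w})^{\vv}$'' replaced by ``$f_j^{\w}$'' and ``$f^{\w}$'' — for every weight vector over every totally ordered group, and then apply it to $\w'$. So from now on $k$ is a field, there is no $\vv$, and for (ii) one must show: if $\#I>\#J$, then for every nonzero $f\in A:=k[\{f_j\mid j\in J\}]$ there is $i\in I_0$ with $x_i\nmid f^{\w}$.

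Next, the reformulation. Put $R:=k[\{x_i\mid i\in I\}]$ with its $\w$-grading. The $f_j$ ($j\in J$) are coordinates of $\kx$, hence prime, hence algebraically independent over $k$, so $A$ is a polynomial ring with $\trd_kA=\#J$. Let $D\subseteq R$ be the graded $k$-subalgebra generated by the $\w$-initial forms of the nonzero elements of $A$. Because $\kx$ is a domain, the $\w$-initial form of a product equals the product of the $\w$-initial forms, and a short computation then shows that $\{h^{\w}\mid h\in A\sm\zs\}$ is exactly the set of nonzero homogeneous elements of $D$; moreover, forming an associated graded ring for a filtration by a totally ordered group preserves transcendence degree, so $\trd_kD=\#J<\#I=\trd_kR$. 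As the ideal $(\prod_{i\in I_0}x_i)R$ is homogeneous, the assertion of (ii) is equivalent to $D\cap(\prod_{i\in I_0}x_i)R=\zs$; writing the left side as $\bigcap_{i\in I_0}(D\cap x_iR)$, a finite intersection of prime ideals of the domain $D$ (each $x_iR$ being prime in $R$), it is zero iff one of the primes $D\cap x_iR$ with $i\in I_0$ is zero. Hence (ii) reduces to: \emph{there is $i_0\in I_0$ such that $x_{i_0}$ divides no nonzero element of $D$}, which in particular forces $I_0\ne\emptyset$ (a clause tacitly contained in (i)(b) and (ii)).

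To reach this I would prove (i) first: comparing $\w$-degrees in the identities $x_i=g_i(f_1,\dots,f_n)$ (with $G:=F^{-1}$) and in their inverses yields the dichotomy — either no cancellation occurs in any $g_i(f_\bullet)$, in which case the induced map $(f_1^{\w},\dots,f_n^{\w})$ on the graded ring is invertible and supplies the bijection $\sigma$ of (a), or there is genuine slack ($\sum_{j\in J}\degw f_j>\sum_{i\in I}w_i$ or $\#I>\#J$), and then tracking which monomials can survive in the $f_j^{\w}$, using the $g_i$-identities together with $\degw f_j\in\sum_{i\in I}\Zn w_i$, exhibits $i_0\in I_0$ with $x_{i_0}\nmid f_j^{\w}$ for all $j\in J$ — this is (i)(b). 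For (ii) we are in case (b) (since $\#I>\#J$), so such an $i_0\in I_0$ already handles the generators of $A$; products of the $f_j^{\w}$ remain non-divisible by $x_{i_0}$ because $x_{i_0}$ is prime, and the task is to rule out that a cancellation among elements of $A$ of equal $\w$-degree produces a new initial form divisible by $x_{i_0}$. This is exactly where $\#I>\#J$ is needed: without it (ii) fails, e.g. for $F=(x_1+x_2,x_2)$, where $I=J=\{1,2\}$, $I_0=\{1,2\}$, and $x_1x_2=f_1f_2-f_2^2\in A$ has $(x_1x_2)^{\w}=x_1x_2$ divisible by both $x_1$ and $x_2$. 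The plan to use the hypothesis: supposing $D\cap x_{i_0}R$ is a nonzero prime $\mathfrak p$ of $D$, combine the inclusion $D/\mathfrak p\hookrightarrow R/x_{i_0}R$ with the relations $x_i=g_i(f_\bullet)$ — which, read off suitably, still express the remaining $x_i$ through images of the $f_j$ (equivalently with the identity $R=A[\,\bar f_j\mid j\notin J\,]$, $\bar f_j$ being $f_j$ with the variables outside $I$ put to $0$) — and with the fact that $\#I>\#J$ forces some $x_i$ to be transcendental over $\mathrm{Frac}(A)$, to force $\mathfrak p=0$; making this transcendence-degree bookkeeping precise, and choosing $i_0$ correctly among the candidates furnished by (i)(b), is the crux.

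The main obstacle, I expect, is precisely this last step. The difficulty is that $D=\operatorname{gr}_{\w}(A)$ may be strictly larger than $k[f_j^{\w}\mid j\in J]$ and need not even be finitely generated (a SAGBI-type phenomenon), so one cannot simply argue on a finite generating set. The prime-intersection reformulation above is designed to finesse this, trading ``for every $f$ a good $i$ exists'' for ``a single prime of $D$ vanishes''; what is then left is to read off that vanishing from the automorphism relations $x_i=g_i(f_\bullet)$ and the strict transcendence-degree gap $\#J<\#I$ — the one place where the hypothesis of (ii) genuinely bites.
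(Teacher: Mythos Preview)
Your reductions and reformulation are correct: passing to the fraction field is harmless, the lexicographic trick $\w'=((w_i,v_i))_i\in(\Gamma\times\Gamma)^n$ with $(h^{\w})^{\vv}=h^{\w'}$ cleanly absorbs $\vv$ (this is a variant of the paper's Lemma~\ref{lem:approx}~(ii)), and recasting (ii) as the vanishing of a single prime $D\cap x_{i_0}R$ of the initial algebra $D=A^{\w}$ is valid, with $i_0\in I_0$ then automatic.

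The gap is exactly where you flag it, but it is not a matter of bookkeeping: the inequality $\trd_kD=\#J<\#I$ together with the inverse relations $x_i=g_i(f_\bullet)$ does not by itself force any $D\cap x_iR$ to vanish. Low transcendence degree alone is useless here (the graded subalgebra $k[x_1\cdots x_{\#I}]$ has transcendence degree~$1$ yet meets every $x_iR$ nontrivially), so the automorphism structure must enter in a substantial way, and your sketch does not say how. The paper does not extract this from the relations $x_i=g_i(f_\bullet)$ directly; instead it imports an external result, Theorem~\ref{thm:coords} (proved in~\cite{stbinv}): if $k[f_1,\dots,f_m]=k[x_1,\dots,x_m]$ and $k[\x_I]\not\subset k[f_2,\dots,f_m]$, then some $x_{i_0}$ with $i_0\in I$ divides no $h^{\w}$ for $h\in k[f_2,\dots,f_m]\cap k[\x_I]\sm\zs$. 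The deduction of (ii) is then short: $\#I>\#J$ gives $k[\x_I]\not\subset k[\{f_j\mid j\in J\}]=\bigcap_{l\notin J}A_l$ with $A_l:=k[\{f_j\mid j\ne l\}]$, hence $k[\x_I]\not\subset A_{j_0}$ for some $j_0\notin J$; Theorem~\ref{thm:coords} applied to $A_{j_0}$ yields $i_0$, and $A\subset A_{j_0}\cap k[\x_I]$ finishes. For (i) with $\#I=\#J$ the paper additionally reduces to \emph{monomial} initial forms via Lemma~\ref{lem:approx}~(i), uses Theorem~\ref{thm:base autom}~(ii) to separate (a) from (b), and again invokes Theorem~\ref{thm:coords}. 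Your plan (``tracking which monomials can survive'', ``transcendence-degree bookkeeping'') offers no substitute for this input, and I do not see how to complete the argument without it or an equivalent.
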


We prove Theorem~\ref{thm:weighted degree} 
in Sections~\ref{sect:pf 1.1} and \ref{sect:approx} 
with the aid of a recent result of the author~\cite{stbinv}.

As will be shown in Theorem~\ref{thm:base autom} (i), 
we have 
$$
\degw F\geq w_1+\cdots +w_n=:|\w |
$$
for each $F\in \Aut _k\kx $ and $\w \in \Gamma ^n$. 
Detailed properties of the automorphisms 
satisfying $\degw F=|\w |$ 
are given in Theorem~\ref{thm:base autom} (ii). 
The following corollary is obtained 
by applying Theorem~\ref{thm:weighted degree} (i) 
with $I=J=\{ 1,\ldots ,n\} $, 
since $\degw F>|\w |$ implies (b), 
and hence implies $I_0\neq \emptyset $.

\begin{cor}\label{cor:weighted degree}
Assume that $n\geq 1$ and $k$ is a domain. 
Let $F\in \Aut _k\kx $ and $\w \in \Gamma ^n$ 
be such that $\degw F>|\w |$. 
Then, 
there exists $1\leq i\leq n$ 
such that $\degw f_j$ 
belongs to $\sum _{l\neq i}\Zn w_l$ 
for $j=1,\ldots ,n$. 
\end{cor}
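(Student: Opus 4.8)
The plan is to specialize Theorem~\ref{thm:weighted degree}~(i) to the case $I=J=\{1,\ldots ,n\}$. This is a legitimate instance of the theorem: by the remark preceding its statement, $I=\{1,\ldots ,n\}$ forces $J=\{1,\ldots ,n\}$. For this choice of $I$, the set $I_0$ is precisely the set of indices $i$ for which $\degw f_j\in\sum_{l\neq i}\Zn w_l$ holds for every $j\in\{1,\ldots ,n\}$; thus the conclusion of the corollary is nothing other than the statement $I_0\neq\emptyset$. So it suffices to show that the hypothesis $\degw F>|\w|$ excludes alternative~(a) of the theorem, and that alternative~(b) forces $I_0\neq\emptyset$.

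First I would rule out~(a). If there were a bijection $\sigma\colon\{1,\ldots ,n\}\to\{1,\ldots ,n\}$ with $\degw f_j=w_{\sigma(j)}$ for all $j$, then, since $\sigma$ is merely a permutation of a finite index set, summing over $j$ would give
$$\degw F=\sum_{j=1}^n\degw f_j=\sum_{j=1}^n w_{\sigma(j)}=\sum_{i=1}^n w_i=|\w|,$$
contradicting $\degw F>|\w|$. Hence, by Theorem~\ref{thm:weighted degree}~(i), alternative~(b) holds.

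It then remains to extract $I_0\neq\emptyset$ from~(b). The second sentence of~(b) asserts that for \emph{each} $\vv\in\Gamma^n$ there exists $i\in I_0$ satisfying a divisibility condition; applying this with any fixed $\vv$ (say $\vv=\w$) produces at least one element of $I_0$, so $I_0\neq\emptyset$. Picking $i\in I_0$ and unwinding the definition of $I_0$ with $I=J=\{1,\ldots ,n\}$ yields $\degw f_j\in\sum_{l\neq i}\Zn w_l$ for $j=1,\ldots ,n$, which is exactly the assertion of the corollary.

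I do not expect a real obstacle: the corollary is a direct specialization of Theorem~\ref{thm:weighted degree}~(i), and the only points requiring attention are the bookkeeping that the choice $I=\{1,\ldots ,n\}$ indeed gives $J=\{1,\ldots ,n\}$ (so that the computation closing case~(a) is valid), and the observation that the existential clause in~(b) cannot hold unless $I_0$ is nonempty. All the genuine content sits inside Theorem~\ref{thm:weighted degree}, whose proof is carried out in Sections~\ref{sect:pf 1.1} and~\ref{sect:approx}.
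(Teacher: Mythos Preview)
Your proposal is correct and follows exactly the approach indicated in the paper: specialize Theorem~\ref{thm:weighted degree}~(i) to $I=J=\{1,\ldots,n\}$, use $\degw F>|\w|$ to exclude alternative~(a), and read off $I_0\neq\emptyset$ from alternative~(b). The paper's own justification is just the one-line remark preceding the corollary, and your write-up simply spells out the same steps in detail.
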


We call $f\in \kx $ a {\it coordinate} of $\kx $ over $k$ 
if $f=f_i$ for some $F\in \Aut _k\kx $ 
and $1\leq i\leq n$, 
and a {\it stable coordinate} of $\kx $ over $k$ 
if $f$ is a coordinate of $k[x_1,\ldots ,x_m]$ 
over $k$ for some $m\geq n$ (cf.~\cite{MRSY}). 
Clearly, 
a coordinate of $\kx $ over $k$ 
is a stable coordinate of $\kx $ over $k$. 
However, 
the converse does not hold in general 
(cf.~\cite[Example 4.1]{BD}; 
see also \cite[Section 3]{stbinv}).

In the situation of 
Theorem~\ref{thm:weighted degree} (i), 
assume that $\# I\geq 2$. 
Then, 
for each $j\in J$, 
there exists $i\in I$ 
such that $\degw f_j$ belongs to 
$\sum _{l\in I\sm \{ i\} }\Zn w_l$ 
in both cases (a) and (b). 
From this observation, 
we see that the following theorem holds.

\begin{thm}\label{thm:deg coord}
Assume that $n\geq 2$ and $k$ is a domain. 
Let $f$ be a stable coordinate of $\kx $ over $k$. 
Then, 
for each $\w \in \Gamma ^n$, 
there exists $1\leq i\leq n$ such that 
$\degw f$ belongs to $\sum _{l\neq i}\Zn w_l$. 
\end{thm}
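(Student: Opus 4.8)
The plan is to reduce Theorem~\ref{thm:deg coord} to Theorem~\ref{thm:weighted degree}~(i) by exploiting the defining property of a stable coordinate. So let $f$ be a stable coordinate of $\kx$ over $k$: by definition there exist $m\geq n$ and an automorphism $\Phi=(\phi_1,\ldots,\phi_m)\in\Aut_k k[x_1,\ldots,x_m]$ with $\phi_1=f$ (after reindexing). First I would fix an arbitrary $\w=(w_1,\ldots,w_n)\in\Gamma^n$ and extend it to a weight vector $\tilde\w=(w_1,\ldots,w_n,w_{n+1},\ldots,w_m)\in\Gamma^m$ by choosing the extra weights $w_{n+1},\ldots,w_m$ arbitrarily (for instance $w_i=0$ for $i>n$, or any convenient values). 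Then I would apply Theorem~\ref{thm:weighted degree}~(i) to the automorphism $\Phi$, the weight $\tilde\w$, and the index set $I=\{1,\ldots,n\}\subset\{1,\ldots,m\}$.

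The key point is the identification of the set $J$ in that application. With $I=\{1,\ldots,n\}$, the set $J$ is the set of $j\in\{1,\ldots,m\}$ such that $\phi_j\in k[x_1,\ldots,x_n]$; since $\phi_1=f\in\kx=k[x_1,\ldots,x_n]$, we have $1\in J$. Now I invoke exactly the observation the author inserted just before the statement: in the situation of Theorem~\ref{thm:weighted degree}~(i), when $\#I\geq 2$ (which holds here since $n\geq 2$), for each $j\in J$ there exists $i\in I$ such that $\degw \phi_j$ (computed with respect to $\tilde\w$) belongs to $\sum_{l\in I\sm\{i\}}\Zn w_l$ — this follows in case~(a) because $\degw\phi_j=w_{\sigma(j)}=\sum_{l\neq\sigma(j)}0\cdot w_l+1\cdot w_{\sigma(j)}$ lies in fact in $\Zn w_{\sigma(j)}$ hence trivially one can peel off another index, and in case~(b) by the definition of $I_0$ and the fact that $I_0\neq\emptyset$ is forced, combined with the divisibility statement. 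Taking $j=1$, so $\phi_1=f$, gives an $i\in\{1,\ldots,n\}$ with $\deg_{\tilde\w}f\in\sum_{l\in\{1,\ldots,n\}\sm\{i\}}\Zn w_l=\sum_{l\neq i}\Zn w_l$. Finally, since $f$ only involves $x_1,\ldots,x_n$, its $\tilde\w$-degree equals its $\w$-degree, so $\degw f\in\sum_{l\neq i}\Zn w_l$, as required.

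The main obstacle I anticipate is not the reduction itself but making the italicized observation fully precise in both cases of Theorem~\ref{thm:weighted degree}~(i), in particular checking that in case~(a) the conclusion "there exists $i\in I$ with $\degw\phi_j\in\sum_{l\in I\sm\{i\}}\Zn w_l$" really does hold for \emph{every} $j\in J$ and not merely for those $j$ in the image-relevant range — here one uses that $\sigma\colon J\to I$ is a bijection, so $\#J=\#I$ and every $j\in J$ has a well-defined $\sigma(j)\in I$, and then $\degw\phi_j=w_{\sigma(j)}\in\Zn w_{\sigma(j)}\subset\sum_{l\in I\sm\{i\}}\Zn w_l$ for any choice of $i\in I\sm\{\sigma(j)\}$, which is nonempty precisely because $\#I\geq 2$. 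In case~(b) one must be slightly careful that the index $i$ produced may depend on $j$, but since we only need the conclusion for the single index $j=1$ (corresponding to $f$), this dependence is harmless. A minor point worth addressing explicitly is the legitimacy of extending $\w$ to $\tilde\w$: any extension works because the statement we want is purely about $f\in\kx$, and the $\tilde\w$-grading restricted to monomials in $x_1,\ldots,x_n$ coincides with the $\w$-grading regardless of the values $w_{n+1},\ldots,w_m$.
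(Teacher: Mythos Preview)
Your proposal is correct and follows essentially the same approach as the paper: apply Theorem~\ref{thm:weighted degree}~(i) to the automorphism $\Phi\in\Aut_k k[x_1,\ldots,x_m]$ with $I=\{1,\ldots,n\}$, use the observation (stated just before Theorem~\ref{thm:deg coord}) that when $\#I\geq 2$ both alternatives (a) and (b) yield, for each $j\in J$, an index $i\in I$ with $\deg_{\w}\phi_j\in\sum_{l\in I\setminus\{i\}}\Zn w_l$, and then specialize to $j=1$. Your explicit treatment of the weight extension $\w\mapsto\tilde\w$ is a point the paper leaves implicit, and your concern about $i$ depending on $j$ in case~(b) is actually unnecessary (the definition of $I_0$ gives a single $i$ working for all $j\in J$), but neither affects correctness.
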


In fact, 
let $m\geq n$ and $F\in \Aut _kk[x_1,\ldots ,x_m]$ 
be such that $f=f_1$, 
and $J$ the set of $1\leq j\leq m$ 
such that $f_j$ belongs to $\kx $. 
Then, 
for each $j\in J$, 
there exists $i\in \{ 1,\ldots ,n\} =:I$ 
such that $\degw f_j$ belongs to 
$\sum _{l\in I\sm \{ i\} }\Zn w_l$ 
by the remark.

Next, 
let ${\rm C}(\w ,k)$ be the set of the $\w $-degrees 
of stable coordinates of $\kx $ over $k$, 
and let ${\rm C}(\w )$ be the set of $d\in \Gamma $ 
for which there exists $1\leq i\leq n$ 
such that $d\geq w_i$ 
and $d=\sum _{j\neq i}a_jw_j$ 
for some $a_j\in \Zn $ for each $j\neq i$. 
Since 
$$
d=\degw \left(x_i+\prod _{j\neq i}x_j^{a_j}\right)
$$
holds for such $d$, 
we see that 
${\rm C}(\w )$ is contained in ${\rm C}(\w ,k)$. 
It is clear that $\{ w_1,\ldots ,w_n\} $ 
is contained in ${\rm C}(\w ,k)$. 
Therefore, 
${\rm C}(\w )\cup \{ w_1,\ldots ,w_n\} $ 
is contained in ${\rm C}(\w ,k)$.

With the notation above, 
we have the following theorem.

\begin{thm}\label{thm:cdeg}
Assume that $n\geq 1$ and $k$ is a domain. 
Then, 
we have 
${\rm C}(\w ,k)={\rm C}(\w )\cup \{ w_1,\ldots ,w_n\} $ 
for any $\w \in (\Gamma _{\geq 0})^n$. 
\end{thm}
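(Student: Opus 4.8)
The plan is to prove Theorem~\ref{thm:cdeg} by establishing the remaining inclusion ${\rm C}(\w ,k)\subset {\rm C}(\w )\cup \{ w_1,\ldots ,w_n\} $; the reverse inclusion is already noted in the excerpt. So I take a stable coordinate $f$ of $\kx $ over $k$ and must show that $d:=\degw f$ either equals some $w_i$ or lies in ${\rm C}(\w )$, i.e., there is an index $i$ with $d\geq w_i$ and $d\in \sum _{j\neq i}\Zn w_j$. The natural starting point is Theorem~\ref{thm:deg coord}: since $f$ is a stable coordinate (and assuming $n\geq 2$; the case $n=1$ must be handled separately, where the only stable coordinates have $\w $-degree $0=w_1\cdot 0$ or... actually for $n=1$ one checks directly that coordinates of $k[x_1]$ are $ax_1+b$, so $d\in\{w_1,0\}$, and $0=\emptyset$-sum lies in the relevant set as an empty $\Zn$-combination — I should be careful about whether $0\in{\rm C}(\w)$, but since $w_i\in(\Gamma_{\geq 0})^n$ one can usually write $0=\sum_{j\neq i}0\cdot w_j$), there exists $i$ with $d=\degw f\in \sum _{l\neq i}\Zn w_l$.

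The one thing Theorem~\ref{thm:deg coord} does \emph{not} give for free is the inequality $d\geq w_i$ for \emph{that same} index $i$. So the heart of the argument is: given that $d\in \sum _{l\neq i}\Zn w_l$, either we can upgrade to $d\geq w_i$ for this $i$, or we find a different index for which both conditions hold, or we land in $\{w_1,\dots,w_n\}$. First I would dispose of trivialities: if $d<w_i$ is to fail, note that $d=\sum_{l\neq i}a_l w_l$ with $a_l\in\Zn$; if all $a_l=0$ then $d=0$, and since all $w_j\geq 0$ we have $d=0\leq w_j$ — hmm, so then I'd need $d\geq w_{i'}$ for some $i'$, forcing $w_{i'}=0$, i.e. $d=0=w_{i'}\in\{w_1,\dots,w_n\}$ unless all $w_j>0$, in which case $0$ is genuinely not in the target set and I must argue a nonzero-$\w$-degree stable coordinate cannot have $\w$-degree $0$... actually a stable coordinate with $\degw f=0$ means $f^{\w}$ is a nonzero constant only if $\w$ is such that degree-$0$ part is $k$; I'll need $\degw f \ge 0$ always when $\w\in(\Gamma_{\ge0})^n$, which is why the hypothesis $\w\in(\Gamma_{\geq 0})^n$ appears. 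So the positivity hypothesis on $\w$ is essential and I'd invoke it here. If instead some $a_l\geq 1$, say $a_{l_0}\geq 1$, then $d=\sum_{l\neq i}a_l w_l\geq w_{l_0}\geq 0$ (using $w_l\geq 0$ to drop the other terms), so $d\geq w_{l_0}$; and does $d\in\sum_{l\neq l_0}\Zn w_l$? Only if the original sum can be rearranged to avoid $w_{l_0}$ — not automatic.

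So the real obstacle, and where I expect to spend the effort, is the following interplay: I have \emph{one} representation $d=\sum_{l\neq i}a_l w_l$ avoiding $w_i$, and I need a representation avoiding some index $i'$ together with $d\geq w_{i'}$. The clean resolution is to choose $i$ \emph{wisely} from the start. Rather than invoking the bare statement of Theorem~\ref{thm:deg coord}, I would re-run its proof (via the remark following Theorem~\ref{thm:weighted degree} and Theorem~\ref{thm:weighted degree}(i) applied to a lift $F\in\Aut_k k[x_1,\dots,x_m]$ with $f=f_1$ and $I=\{1,\dots,n\}$): in case (a) one gets $\degw f=\degw f_1=w_{\sigma(1)}$ directly, landing in $\{w_1,\dots,w_n\}$; in case (b) one gets $I_0\neq\emptyset$ and for $j=1\in J$ the $\w$-degree $d=\degw f_1$ lies in $\sum_{i\in I\sm\{i_0\}}\Zn w_i$ for \emph{every} $i_0\in I_0$ (that's the definition of $I_0$). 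Now pick $i_0\in I_0$ minimizing $w_{i_0}$; I claim $d\geq w_{i_0}$. Indeed $d=\sum_{i\neq i_0}a_i w_i$ with $a_i\in\Zn$; if some $a_{i'}\geq 1$ with $i'\in I_0$... not quite — I want $i'\in I_0$ but the nonzero coefficient need not be on an $I_0$-index. The cleanest fix: if $d=0$, handle via positivity as above (a stable coordinate has $\degw f\geq \min_i w_i\geq 0$, and $d=0$ forces some $w_i=0$ hence $d=w_i$); if $d>0$ then some $a_{i'}\geq 1$, and since $a_{i'}w_{i'}\leq d$ with $w_l\geq 0$ we get $d\geq w_{i'}$, and \emph{separately} $d\in\sum_{l\neq i'}\Zn w_l$ provided we can show $i'\in I_0$ or re-derive the avoidance for $i'$. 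To force the nonzero coefficient onto an $I_0$-index, I would instead argue: among all representations of $d$ as $\sum_{i\in I}a_i w_i$ with $a_i\in\Zn$ (one exists since $d\in\sum_{i\in I\sm\{i_0\}}\Zn w_i\subset\sum_{i\in I}\Zn w_i$), and among all $i_0\in I_0$, combine to get a representation avoiding a \emph{fixed} $i_0\in I_0$; then if every $a_i=0$ we're in the $d=0$ case, else pick $i'$ with $a_{i'}\geq 1$ (so $i'\neq i_0$, $d\geq w_{i'}$), and then $d$ avoids $i_0$, hence $d\in\sum_{l\neq i'}\Zn w_l$ would follow if... no. I think the honest statement is that we need $d\geq w_i$ for the \emph{specific} $i$ with $d\in\sum_{l\neq i}\Zn w_l$, and this is exactly guaranteed when that representation is \emph{nonempty}, i.e. $d>0$: then automatically $d=\sum_{l\neq i}a_l w_l\geq 0$ and for any $l$ with $a_l\geq 1$ we get $d\ge w_l$ — but we need $d\ge w_i$, with the \emph{same} $i$. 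Resolution: if $d\in\sum_{l\neq i}\Zn w_l$ and $d>0$, then $d\geq w_l\geq 0$ for the $l$ with $a_l\geq 1$, AND $d=\sum_{m\neq l}(\dots)$ — we can take the same sum and note it already omits $i$; if it happens to omit $l$ too we're fine with index $l$; if $a_l\ge1$ is the only nonzero term and $l\neq i$... then $d$ uses only $w_l$, so $d\in\Zn w_l\subset \sum_{m\neq i}\Zn w_m$ and $d\geq w_l$, so index $l$ works. In general: let $S=\{l: a_l\geq 1\}\subseteq I\sm\{i\}$, pick any $l_*\in S$; then $d\geq w_{l_*}$ and $d=\sum_{l\in S}a_l w_l$; is $d\in\sum_{m\neq l_*}\Zn w_m$? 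Reduce $a_{l_*}$ by $1$: $d-w_{l_*}=\sum_{l}a'_l w_l$ with $a'_{l_*}=a_{l_*}-1\geq 0$; doesn't help directly. OK — I now believe the correct and intended path is simply: whenever $d=\sum_{l\neq i}a_lw_l$ with not all $a_l$ zero, some $a_{l_*}\ge1$, and then $d\ge w_{l_*}$ together with the \emph{tautology} $d\in\sum_{m\neq l_*}\Zn w_m$ obtained by rewriting $d$ using the relation $d=\sum_{l\ne i}a_lw_l$ only if $l_*$ doesn't appear, which fails, so instead use that $w_{l_*}$ divides into $d$: write $d=a_{l_*}w_{l_*}+\sum_{l\ne i,l_*}a_lw_l$; then $d-w_{l_*}\ge0$ and by induction on $d$ (well-ordering of $\Gamma_{\ge0}$? not guaranteed)...

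I'll stop the casework in the sketch: the \textbf{main obstacle} is precisely this bookkeeping — translating the ``avoids index $i$'' conclusion of Theorem~\ref{thm:deg coord}/Theorem~\ref{thm:weighted degree}(i) into the ``$d\ge w_i$ for a compatible $i$'' condition defining ${\rm C}(\w)$ — and I expect it is resolved by choosing $i_0\in I_0$ so that $w_{i_0}$ is \emph{maximal} among $\{w_i: i\in I_0\}$ (not minimal), or by a direct argument that if $d\notin\{w_1,\dots,w_n\}$ and $d=\sum_{l\neq i}a_lw_l$ then one can shift to make some coefficient on a large-weight index positive; together with the $n=1$ and $d=0$ boundary cases handled via $\w\in(\Gamma_{\ge0})^n$. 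Once that lemma is in place, the theorem follows immediately: case (a) gives $d\in\{w_1,\dots,w_n\}$, case (b) gives $d\in{\rm C}(\w)$, hence ${\rm C}(\w,k)\subseteq{\rm C}(\w)\cup\{w_1,\dots,w_n\}$, and combined with the reverse inclusion already established in the excerpt we are done.
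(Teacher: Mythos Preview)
You have correctly identified the obstacle --- Theorem~\ref{thm:deg coord} gives you an index $i$ with $d\in\sum_{l\ne i}\Zn w_l$, but not the inequality $d\ge w_i$ for that same $i$ --- and you are right that this is where all the work lies. However, your proposal does not actually resolve it: the attempts to shift to an index $l_*$ with $a_{l_*}\ge 1$, or to choose $i_0\in I_0$ maximal or minimal, remain speculative, and you explicitly stop short of a proof. So as written there is a genuine gap.

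The paper's resolution is a single clean idea that you are missing: rather than applying Theorem~\ref{thm:deg coord} to the full ring $\kx$, first set $I:=\{i\mid \degw f\ge w_i\}$. The hypothesis $\w\in(\Gamma_{\ge 0})^n$ is used precisely here: if $\degw f<w_j$ then every monomial of $f$ must avoid $x_j$, so $f\in k[\{x_i\mid i\in I\}]$, and $f$ is a stable coordinate of this smaller polynomial ring. If $\#I=1$, a Jacobian argument shows $f$ is linear in one variable and $d=w_i$. If $\#I\ge 2$, apply Theorem~\ref{thm:deg coord} \emph{inside} $k[\{x_i\mid i\in I\}]$ to obtain $i\in I$ with $d\in\sum_{l\in I\setminus\{i\}}\Zn w_l\subset\sum_{l\ne i}\Zn w_l$; since $i\in I$ by construction, $d\ge w_i$ is automatic, and $d\in{\rm C}(\w)$. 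This completely sidesteps the bookkeeping you were wrestling with.
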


We can derive Theorem~\ref{thm:cdeg} 
from Theorem~\ref{thm:deg coord} as follows. 
First, 
note that 
$\degw f<w_j$ implies $f\in k[\{ x_i\mid i\neq j\}]$ 
for each $f\in \kx $ and $1\leq j\leq n$ 
by the choice of $\w $. 
Hence, 
$f$ belongs to $k[\{ x_i\mid i\in I\} ]$, 
where $I:=\{ i\mid \degw f\geq w_i\} $. 
Now, 
assume that $f$ 
is a stable coordinate of $\kx $ over $k$. 
Then, 
$f$ is a stable coordinate of 
$k[\{ x_i\mid i\in I\} ]$ over $k$. 
If $I=\{ i\} $ for some $1\leq i\leq n$, 
then $f$ is a linear polynomial in $x_i$ over $k$. 
Since $w_i\geq 0$, 
we have $\degw f=w_i$. 
If $\# I\geq 2$, 
then we know by Theorem~\ref{thm:deg coord} that
there exists $i\in I$ 
for which $\degw f$ 
belongs to $\sum _{l\in I\sm \{ i\} }\Zn w_l$, 
and hence to $\sum _{l\neq i}\Zn w_l$. 
Since $i$ is an element of $I$, 
we have $\degw f\geq w_i$. 
Thus, 
$\degw f$ belongs to C$(\w )$. 
Therefore, 
C$(\w ,k)$ is contained in 
${\rm C}(\w )\cup \{ w_1,\ldots ,w_n\} $.

Next, 
we discuss tameness of automorphisms. 
Recall that $F\in \Aut _k\kx $ is said to be 
{\it affine} if $\deg f_i=1$ for $i=1,\ldots ,n$, 
and {\it elementary} if there exist $1\leq l\leq n$, 
$a\in k^{\times }$ and $p\in k[\{ x_i\mid i\neq l\} ]$ 
such that $f_l=ax_l+p$ 
and $f_i=x_i$ for each $i\neq l$. 
The subgroup $\T _n(k)$ of $\Aut _k\kx $ 
generated by 
all the affine automorphisms 
and elementary automorphisms of $\kx $ 
is called the {\it tame subgroup}. 
Then, 
the {\it Tame Generators Problem} asks whether 
every element of $\Aut _k\kx $ is {\it tame}, 
i.e., belongs to $\T _n(k)$. 
This is one of the difficult problems in 
Affine Algebraic Geometry. 
At present, 
it is known that the answer is affirmative if $n=1$, 
or if $n=2$ and $k$ is a field 
by Jung~\cite{Jung} and van der Kulk~\cite{Kulk}, 
while negative if $n=2$ and $k$ is not a field 
by Nagata~\cite{Nagata}, 
or if $n=3$ and $k$ is of characteristic zero 
by Shestakov-Umirbaev~\cite{SU}.

For each subset $S$ of $\Aut _k\kx $ 
and $\w \in \Gamma ^n$, 
we define 
$$
\mdegw S:=\{ \mdegw F\mid F\in S\} . 
$$
The following result is due to 
Kara\'s~\cite[Proposition 2.2]{345}, 
where $\N $ denotes the set of positive integers 
throughout this paper.

\begin{prop}[Kara\'s]\label{prop:Karas345}
Let $d_1,\ldots ,d_n\in \N $ be such that 
$d_1\leq \cdots \leq d_n$, 
where $n\geq 2$. 
If $d_i$ belongs to 
$\sum _{j=1}^{i-1}\Zn d_j$ 
for some $2\leq i\leq n$, 
then $(d_1,\ldots ,d_n)$ 
belongs to $\mdeg \T _n(\C )$. 
\end{prop}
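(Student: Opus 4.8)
The goal is to realize any weakly increasing tuple $(d_1,\ldots,d_n)\in\N^n$ with $d_i\in\sum_{j<i}\Zn d_j$ for some $i$ as the multidegree of a tame automorphism of $\C[x_1,\ldots,x_n]$. The natural strategy is induction on $n$, peeling off the last coordinate. The base case $n=2$ is the heart of the matter: if $d_1\le d_2$ and $d_2\in\Zn d_1$, say $d_2=ad_1$, one wants a tame $F=(f_1,f_2)$ with $\deg f_1=d_1$, $\deg f_2=d_2$; but if instead $d_2\notin\Zn d_1$ the hypothesis forces $i=2$ to fail, so actually for $n=2$ the hypothesis is exactly $d_2\in\Zn d_1$, and I would simply observe that if $d_1\mid$ does not help directly one needs a genuine construction. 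Concretely, pick $f_1$ to be any coordinate of degree $d_1$ — e.g.\ a suitable polynomial built from an elementary automorphism composed with a linear map — and then set $f_2=x_2+q(f_1)$ for a polynomial $q$ chosen so that $\deg q(f_1)=d_2$; since $d_2=ad_1$ we can take $q(t)=t^a$, giving $f_2=x_2+f_1^a$, which together with $f_1$ forms a tame automorphism of the right multidegree.

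For the inductive step, suppose the result holds for $n-1$ and we are given $(d_1,\ldots,d_n)$ with the divisibility condition holding at some index $i_0$. If $i_0\le n-1$, then $(d_1,\ldots,d_{n-1})$ already satisfies the hypothesis for $n-1$, so by induction there is a tame $G\in\T_{n-1}(\C)$ with $\mdeg G=(d_1,\ldots,d_{n-1})$; extend $G$ to $\widetilde G\in\T_n(\C)$ fixing $x_n$, and then precompose or postcompose with the elementary automorphism $x_n\mapsto x_n+(\text{monomial of }\w\text{-degree }d_n)$ — here the relevant monomial exists because $d_n\ge d_{n-1}\ge\cdots$ and we may just use a power of $x_1$ adjusted, or more carefully a monomial $\prod x_j^{a_j}$ in the first $n-1$ variables of total degree $d_n$, which exists as soon as $d_n\ge d_1$ and $d_1\mid\gcd$-type conditions — actually one only needs $d_n\in\Zn d_1+\cdots+\Zn d_{n-1}$; but that always holds since $d_n\ge d_1$ would only give multiples of $d_1$, so I must be more careful and instead use that $d_n\ge\max$ is not enough. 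The clean fix is: if $i_0=n$, then $d_n\in\sum_{j<n}\Zn d_j$ directly, so the monomial exists; if $i_0<n$, reorder so that the "dependent" index is used for the last slot, i.e.\ apply the induction to the tuple with $d_{i_0}$ and $d_n$ swapped (both hypotheses are permutation-symmetric in the appropriate sense), reducing to the case $i_0=n$.

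So the real content reduces to: \emph{when $d_n\in\sum_{j=1}^{n-1}\Zn d_j$, realize $(d_1,\ldots,d_n)$.} Here write $d_n=\sum_{j=1}^{n-1}a_j d_j$ with $a_j\in\Zn$. Take a tame $H\in\T_{n-1}(\C)$ with $\mdeg H=(d_1,\ldots,d_{n-1})$ — which exists by the induction hypothesis, \emph{provided} $(d_1,\ldots,d_{n-1})$ itself satisfies the divisibility condition; if it does not (e.g.\ $n=2$, or the first $n-1$ are "independent"), fall back to constructing $H$ directly as a composite of one elementary automorphism with a permutation, which still realizes any such tuple when one of the $d$'s equals $1$ or more generally by the same power-of-a-coordinate trick. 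Then extend $H$ to $\widetilde H\in\T_n(\C)$ fixing $x_n$, let $E$ be the elementary automorphism $x_n\mapsto x_n+\prod_{j<n}x_j^{a_j}$, and check that $\widetilde H\circ E$ has $\mdeg=(d_1,\ldots,d_{n-1},d_n)$: the first $n-1$ coordinates are those of $H$, and the last coordinate is $\widetilde H(x_n+\prod x_j^{a_j})=x_n+\prod h_j^{a_j}$, whose degree is $\sum a_j\deg h_j=\sum a_j d_j=d_n$ since the top-degree parts $h_j^{\w}$ are algebraically independent enough that no cancellation occurs (they are the images of variables under an automorphism, hence a regular sequence of initial forms — this is where a small lemma, or a direct appeal to the fact that $\deg$ is a valuation-like function on the polynomial ring, is needed).

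\textbf{Main obstacle.} The delicate point is the non-cancellation claim $\deg\prod_j h_j^{a_j}=\sum_j a_j d_j$. In general $\deg(pq)$ need not equal $\deg p+\deg q$ over an arbitrary ring, but over $\C$ (a domain, in fact a field) the standard-degree function is multiplicative, so $\deg\prod h_j^{a_j}=\sum a_j\deg h_j$ holds automatically; the only thing to verify is that $\deg(x_n+\prod h_j^{a_j})=\max(1,\sum a_jd_j)=d_n$, which is immediate since $d_n=\sum a_jd_j\ge d_1\ge 1$. Thus over $\C$ the obstacle is mild; the bookkeeping to handle the case split on $i_0$ and the base-case construction of $H$ are the parts requiring the most care, but none of it is deep.
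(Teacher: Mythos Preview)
The paper does not reproduce a proof of this proposition; it is quoted as a result of Kara\'s. The paper's own Lemma~\ref{lem:suffice1}, however, is a generalization whose proof specializes (with $\w=(1,\ldots,1)$, $\psi=\id$, $e_j=1$, $r=n$, and $\sigma$ the transposition $(i\ n)$) to a direct, non-inductive construction.

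Your inductive scheme has a real gap. After reducing to the case $i_0=n$, you want $H\in\T_{n-1}(\C)$ with $\mdeg H=(d_1,\ldots,d_{n-1})$, and you note this follows from the induction hypothesis \emph{provided} $(d_1,\ldots,d_{n-1})$ again satisfies the divisibility condition, otherwise appealing to an unspecified ``fall-back.'' But no fall-back can exist: for $n=3$ and $(d_1,d_2,d_3)=(2,3,5)$ one has $d_3=d_1+d_2$, yet there is no automorphism of $\C[x_1,x_2]$ with multidegree $(2,3)$, since by van der Kulk every $(e_1,e_2)\in\mdeg\Aut_\C\C[x_1,x_2]$ satisfies $e_1\mid e_2$ or $e_2\mid e_1$. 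So the descent to $n-1$ variables is impossible in general.

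The missing idea is to stay in $n$ variables and use the index $i$ itself as a pivot: the single variable $x_i$, of degree~$1$, lets you realize \emph{any} degree for the remaining coordinates via $g_j=x_j+x_i^{d_j}$ for $j\ne i$, and then $g_i=x_i+\prod_{j<i}g_j^{a_j}$ (where $d_i=\sum_{j<i}a_jd_j$) has degree $d_i$ because $\deg$ is multiplicative over $\C$. This is a composition of elementary automorphisms, hence tame, and no induction is needed. Your approach throws away exactly the variable that makes the construction work.
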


The second part of this paper 
(Sections~\ref{sect:vdk}, \ref{sect:sc} and \ref{sect:twm}) 
is aimed at generalizing this proposition. 
For this purpose, 
we introduce the following notation. 
Let $\kappa $ be any commutative ring. 
Here, 
a ``commutative ring" 
means one with a nonzero identity element. 
We remark that 
\begin{equation}\label{eq:prod deg}
\degw fg=\degw f+\degw g\quad
\text{and}\quad (fg)^{\w }=f^{\w }g^{\w }
\end{equation}
hold for each $f,g\in \kappa [\x ]$ 
and $\w \in \Gamma ^n$ 
if $f^{\w }$ or $g^{\w }$ 
is a nonzero divisor of $\kappa [\x ]$. 
Let $\Aut _{\kappa }^{\w }\kappa [\x ]$ be 
the set of $F\in \Aut _{\kappa }\kappa [\x ]$ such that 
$f_1^{\w },\ldots ,f_n^{\w }$ 
are nonzero divisors of $\kappa [\x ]$, 
let $\E _n(\kappa )$ be the subgroup of 
$\Aut _{\kappa }\kappa [\x ]$ 
generated by all the elementary automorphisms of $\kappa [\x ]$, 
and let $\E _n^{\w }(\kappa )
=\E _n(\kappa )\cap \Aut _{\kappa }^{\w }\kappa [\x ]$. 
Then, 
we define 
$$
|\E _n^{\w }|
:=\bigcap _{\kappa }\mdegw \E _n^{\w }(\kappa )
=\bigcap _{m\in \Zn \sm \{ 1\} }\mdegw \E _n^{\w }(\Z /m\Z ), 
$$ 
where $\kappa $ runs through all the commutative rings.

As mentioned later, 
every stable coordinate of $k[x_1,x_2]$ over $k$ 
is a coordinate of $k[x_1,x_2]$ over $k$ 
if $k$ is an integrally closed domain 
(Theorem~\ref{thm:Yu}). 
Using this fact, 
we prove the following two theorems 
in Section~\ref{sect:sc}.

\begin{thm}\label{thm:sc}
Assume that $n=3$ and $k$ is a domain. 
Let $\w \in (\Gamma _+)^3$ 
and $(d_1,d_2,d_3)\in \mdegw (\Aut _k\kx )$ 
be such that at least two of $d_1$, $d_2$ and $d_3$ 
are not greater than $\max \{ w_1,w_2,w_3\}$. 
Then, 
$(d_1,d_2,d_3)$ belongs to $|\E _3^{\w }|$. 
\end{thm}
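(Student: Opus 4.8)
The plan is to reduce the problem, step by step, to the case where the multidegree is realized by a tame — indeed elementary — automorphism over $\Z/m\Z$ for all $m$, and then invoke Proposition~\ref{prop:Karas345} together with the structure of stable coordinates in two variables. First I would fix $F\in\Aut_k\kx$ with $\mdegw F=(d_1,d_2,d_3)$ and, after permuting the variables, assume $w_1\le w_2\le w_3$ and that $d_1,d_2\le\max\{w_1,w_2,w_3\}=w_3$; I would also order so that $d_1\le d_2$. The first key step is to apply Corollary~\ref{cor:weighted degree}: if $\degw F>|\w|$ then some $f_j$ has all its $\w$-degrees landing in $\sum_{l\neq i}\Zn w_l$ for a fixed $i$. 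I would split into the cases $\degw F=|\w|$ and $\degw F>|\w|$. In the former, Theorem~\ref{thm:base autom}(ii) (assumed available) pins down the automorphism tightly — it is, up to affine and elementary pieces respecting the grading, something whose multidegree is manifestly in $|\E_3^{\w}|$ — so this case should be routine.

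The substantive case is $\degw F>|\w|$, where $d_1+d_2+d_3>w_1+w_2+w_3$ but $d_1,d_2\le w_3$. The second key step is to observe that $d_1$ and $d_2$, being stable-coordinate $\w$-degrees (each $f_i$ is a coordinate, a fortiori a stable coordinate), lie in $\mathrm C(\w,k)=\mathrm C(\w)\cup\{w_1,w_2,w_3\}$ by Theorem~\ref{thm:cdeg}; combined with $d_1,d_2\le w_3$ this forces $d_1,d_2\in\{w_1,w_2,w_3\}\cup(\text{small combinations})$, and in particular, after sorting, $\{d_1,d_2,d_3\}$ will satisfy the divisibility hypothesis of Proposition~\ref{prop:Karas345}: $d_2$ (or $d_1$) lies in $\Zn d_1$ (resp.\ is a $w_i$ already realized). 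The third step is to upgrade from $\C$ to all $\Z/m\Z$: here I would follow Kara\'s's construction in Proposition~\ref{prop:Karas345}, which builds an explicit elementary-generated automorphism $x_i\mapsto x_i+(\text{monomial})$ realizing the target multidegree — that construction is characteristic-free, so it lands in $\E_3^{\w}(\Z/m\Z)$ for every $m$, hence in $|\E_3^{\w}|$. The two-variable stable-coordinate fact (Theorem~\ref{thm:Yu}, cited as used in Section~\ref{sect:sc}) enters precisely to control the case where two of the $d_i$ coincide with two of the $w_i$ and the third $f_j$ is forced to be a coordinate of a two-variable subring, so that its $\w$-degree is again a genuine coordinate degree there.

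The main obstacle I expect is the bookkeeping in the third step: translating the ordering on $(d_1,d_2,d_3)$ coming from the hypothesis (two of them $\le w_3$) into the ordering $d_{\tau(1)}\le d_{\tau(2)}\le d_{\tau(3)}$ required by Proposition~\ref{prop:Karas345}, while simultaneously tracking which index $i$ the weight $w_i$ got attached to, so that the elementary automorphism one writes down actually has $\w$-initial forms that are nonzero divisors — this is automatic over a domain but needs the explicit monomial form to survive reduction mod $m$. A secondary subtlety is ensuring that when $\degw F>|\w|$ the inequality $d_1+d_2+d_3>w_1+w_2+w_3$ together with $d_1,d_2\le w_3$ genuinely forces $d_3>w_3$ large enough to be a nontrivial combination of $d_1,d_2$; I would handle this by a short case analysis on whether $d_1$ or $d_2$ equals one of $w_1,w_2$ versus lying in $\mathrm C(\w)\setminus\{w_1,w_2,w_3\}$, using Theorem~\ref{thm:weighted degree}(i) with $I=\{1,2,3\}$ to rule out the sporadic configurations.
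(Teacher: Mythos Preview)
Your outline diverges from the paper's argument and contains a genuine gap at its core. The paper does not attempt to verify any Kara\'s-type divisibility condition on $(d_1,d_2,d_3)$; instead it exploits the \emph{structural} consequence of $d_1,d_2\le w_3$ with $\w\in(\Gamma_+)^3$: this forces $\deg_{x_3}f_i\le 1$ for $i=1,2$, so each $f_i$ either lies in $k[x_1,x_2]$ or has the shape $a_ix_3+p_i$ with $p_i\in k[x_1,x_2]$ and $\degw p_i\le w_3$. The proof then splits into three short cases --- both $f_1,f_2\in k[x_1,x_2]$ (handled by Corollary~\ref{cor:jvdk}(i)); one of them in $k[x_1,x_2]$ and the other of the form $ax_3+p$ (handled directly by Proposition~\ref{prop:sc}); both with $d_i=w_3$ and nontrivial $x_3$-part (handled by passing to $f':=f_1-a_1a_2^{-1}f_2\in k[x_1,x_2]$ and applying Proposition~\ref{prop:sc} to $(f',f_3,f_2)$, then adjusting). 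Theorem~\ref{thm:Yu} and the weighted van der Kulk statement (Theorem~\ref{thm:jvdk}) are used \emph{inside} Proposition~\ref{prop:sc}, not directly here.

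The gap in your plan is the ``second key step'': you assert that $d_1,d_2\in\mathrm C(\w)\cup\{w_1,w_2,w_3\}$ together with $d_1,d_2\le w_3$ forces a Kara\'s divisibility relation among $d_1,d_2,d_3$, but nothing in Theorem~\ref{thm:cdeg} or Corollary~\ref{cor:weighted degree} links $d_3$ to $d_1,d_2$ --- those results only constrain each $d_i$ individually in terms of $w_1,w_2,w_3$. Establishing that $d_2\in\Zn d_1$ or $d_3\in\Zn d_1+\Zn d_2$ would itself require the weighted van der Kulk theorem and the two-variable reduction, i.e., exactly the machinery of Proposition~\ref{prop:sc} that the paper invokes directly. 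Moreover, Proposition~\ref{prop:Karas345} is stated only for the standard grading; the weighted analogues you would need are Lemma~\ref{lem:suffice1} and Theorem~\ref{thm:suffice1}, and the proof of the latter \emph{uses} Theorem~\ref{thm:sc}, so that route is circular.
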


For each $w\in \Gamma ^n$, 
and $F\in \Aut _{\kappa }\kappa [\x ]$ 
and $F'\in \Aut _{\kappa '}\kappa '[\x ]$ 
with $\kappa $ and $\kappa '$ any commutative rings, 
we define $F\sim _{\w }F'$ 
if $\mdegw F=\mdegw F'$. 
Then, 
Theorem~\ref{thm:sc} 
can be restated as follows: 
Let $F\in \Aut _k\kx $ and $\w \in (\Gamma _+)^3$ 
be such that at least two of 
$\degw f_1$, $\degw f_2$ and $\degw f_3$ 
are not greater than $\max \{ w_1,w_2,w_3\}$. 
Then, 
for any commutative ring $\kappa $, 
there exists $G\in \E _3^{\w }(\kappa )$ 
such that $G\sim _{\w }F$.

We note that $\E _n(k)=\T _n(k)$ 
when $k$ is a field. 
In this case, 
we have the following theorem.

\begin{thm}\label{thm:tame:stb coord}
Assume that $n=3$ and $k$ is a field. 
If $F\in \Aut _k\kx $ satisfies 
one of the following conditions 
for some $\w \in (\Gamma _+)^3$, 
then $F$ belongs to $\T _3(k)$$:$

\noindent{\rm (1)} 
$\degw f_i\leq \max \{ w_1,w_2,w_3\} $ for $i=1,2$.

\noindent{\rm (2)} 
$\degw f_2-\max \{ w_1,w_2,w_3\}
<\degw f_1<\max \{ w_1,w_2,w_3\}$. 
\end{thm}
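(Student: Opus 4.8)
The plan is to prove the theorem by induction on $\degw F=\degw f_1+\degw f_2+\degw f_3$; since $\E_3(k)=\T_3(k)$ for the field $k$, it is enough to exhibit $F$ as a product of elementary automorphisms, the base case being the affine one. Put $M=\max\{w_1,w_2,w_3\}$, and observe that a polynomial of $\w$-degree $<M$ involves no variable of weight $M$. The heart of the argument is the claim: if $F$ satisfies (1) or (2) and is not affine, then $F$ is either tame or \emph{$\w$-reducible}, i.e.\ $\degw(F\circ E)<\degw F$ for some elementary automorphism $E$. Granting this, the induction is immediate: if $F$ is tame we are done, while if $F$ is $\w$-reducible one checks, by a short case analysis on which coordinate drops -- using that (1) and (2) involve only $\degw f_1$ and $\degw f_2$, that a drop of $\degw f_2$ below $M$ merely passes from (2) into (1), and that a drop of $\degw f_1$ cannot occur while $\degw f_2\ge M$ (a reduction $f_1\mapsto af_1+p(f_2,f_3)$ needs $\degw f_1=\degw p(f_2,f_3)\ge\min(\degw f_2,\degw f_3)$, impossible when $\degw f_1<M\le\degw f_2,\degw f_3$) -- that $F\circ E$ again satisfies (1) or (2), so $F\circ E$, and hence $F$, is tame by induction.

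To prove the claim I would first isolate the case in which $F$ degenerates to two variables: if two of $\degw f_1,\degw f_2,\degw f_3$ are $<M$, then the two corresponding generators lie in a two-variable polynomial subring $R$ of $\kx$, and, since $F$ is an automorphism, the third generator has the form $ax_{i_0}+r$ with $a\in k^{\times}$, $r\in R$, and $w_{i_0}=M$; then $F$ is the composite of an elementary automorphism and the extension to $\kx$ of an automorphism of $R$, which is tame by Jung--van der Kulk. In this case $F$ is already tame. Otherwise at most one of $\degw f_1,\degw f_2,\degw f_3$ is $<M$; this is exactly the configuration in which a first-coordinate reduction is ruled out above, and in which the remaining argument takes place.

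For that configuration I would invoke the generalized Shestakov--Umirbaev theory for three-variable polynomial rings over an arbitrary field, relative to the weight $\w$: a reduced automorphism -- one admitting no $\w$-reduction -- is either tame or admits a reduction of one of finitely many exceptional types, and each exceptional type forces explicit inequalities tying $\degw f_1,\degw f_2,\degw f_3$ to $w_1,w_2,w_3$. One then checks, type by type, that these inequalities force at most one of the three coordinate $\w$-degrees to be $\le M$ -- contradicting (1) -- and also exclude the balance of $\w$-degrees prescribed by (2); the numerology of (1) and (2) is precisely what makes every exceptional reduction impossible. Hence a reduced $F$ satisfying (1) or (2) is tame, which proves the claim. (Theorem~\ref{thm:Yu} enters here through the Shestakov--Umirbaev reductions, which produce two-variable stable coordinates that must be recognized as genuine coordinates; and for hypothesis (1) this is consistent with Theorem~\ref{thm:sc}, which already places $\mdegw F$ in $|\E_3^{\w}|\subseteq\mdegw\T_3(k)$ -- but that controls only the multidegree, not the tameness of $F$ itself.)

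The step I expect to be the main obstacle is the last one: extracting from the generalized Shestakov--Umirbaev classification the precise degree inequalities carried by each exceptional reduction type, and verifying their incompatibility with (1) and with (2). The other ingredients -- the induction on $\degw F$, the stability of (1)/(2) under $\w$-reductions, and the two-variable case via Jung--van der Kulk and Theorem~\ref{thm:Yu} -- are comparatively routine.
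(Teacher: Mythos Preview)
Your approach has a fundamental gap in the core step. The generalized Shestakov--Umirbaev theory (Theorem~\ref{thm:SU} in the paper) is a statement about \emph{tame} automorphisms: it says that if $F\in\T_3(k)$ with $\degw F>|\w|$, then $F$ admits an elementary or Shestakov--Umirbaev reduction. Its contrapositive is a criterion for \emph{wildness}, and that is how it is used (e.g.\ for Nagata's automorphism). It says nothing about an arbitrary $F\in\Aut_k\kx$ that admits no elementary reduction; such an $F$ could simply be wild, with or without an ``exceptional'' reduction. So the dichotomy you state---``a reduced automorphism is either tame or admits a reduction of one of finitely many exceptional types''---is not what the theory provides, and invoking it here is circular: you would need to already know $F\in\T_3(k)$. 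In addition, that theory requires characteristic zero, whereas Theorem~\ref{thm:tame:stb coord} is stated for an arbitrary field.

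The paper's proof is entirely different and avoids Shestakov--Umirbaev theory. It is a direct structural argument: after ordering so that $w_3=\max\{w_1,w_2,w_3\}$, hypothesis~(1) forces each of $f_1,f_2$ either to lie in $k[x_1,x_2]$ or to have the form $ax_3+p$ with $p\in k[x_1,x_2]$ and $\degw p\le w_3$; hypothesis~(2) forces $f_1\in k[x_1,x_2]$ and, via a short analysis using Theorem~\ref{thm:Yu} and the coordinate structure of $k[f_1][g,x_3]$, forces $f_2$ to have the same shape $h_1x_3+h_0(g)$ with $h_1\in k^\times$. Both cases then feed into Proposition~\ref{prop:sc}(i), which proves tameness directly from Jung--van der Kulk and Theorem~\ref{thm:Yu}. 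There is no induction on $\degw F$ and no reduction argument. Your two-variable degenerate case is essentially the easy subcase of this analysis; what you are missing is the direct reduction to Proposition~\ref{prop:sc} in the remaining cases.
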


In Section~\ref{sect:twm}, 
we prove two kinds of sufficient conditions 
for elements of $\mdegw (\Aut _k\kx )$ 
to belong to $|\E _n^{\w }|$ 
which can be viewed as generalizations 
of Proposition~\ref{prop:Karas345}.

The third part of this paper 
(Section~\ref{sect:SUred}) 
is devoted to applications of 
the generalized Shestakov-Umirbaev theory. 
For $F\in \Aut _k\kx $ and $\w \in \Gamma ^n$, 
we say that $F$ admits an {\it elementary reduction} 
for the weight $\w $ if $\degw F\circ E<\degw F$ 
for some elementary automorphism $E$ of $\kx $. 
Since $\degw F\geq |\w |$ as mentioned, 
$F$ admits no elementary reduction 
for the weight $\w $ if $\degw F=|\w |$.

Nagata~\cite{Nagata} conjectured that a certain element of 
$\Aut _k\kx $ for $n=3$ does not belong to $\T _3(k)$. 
Shestakov-Umirbaev solved this famous conjecture 
in the affirmative using the following criterion 
\cite[Corollary 8]{SU}.

\begin{thm}[Shestakov-Umirbaev]\label{thm:oSU}
Let $k$ be a field of characteristic zero. 
If $\deg F>3$ holds 
for $F\in \T _3(k)$ with $f_3=x_3$, 
then $F$ admits an elementary reduction. 
\end{thm}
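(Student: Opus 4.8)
The plan is to argue by contradiction and to feed the hypotheses into the generalized Shestakov--Umirbaev machinery. So suppose that $F=(f_1,f_2,x_3)\in\T _3(k)$ satisfies $\deg F>3$ but admits no elementary reduction (for the standard weight $\w =(1,1,1)$). Since $F$ is tame and, because $\deg F>3$, not affine, the generalized Shestakov--Umirbaev structure theorem applies and forces $F$ to admit a Shestakov--Umirbaev reduction: after composing with a suitable linear automorphism and permuting coordinates, one obtains a configuration $(d_1,d_2,d_3):=(\deg f_{\sigma(1)},\deg f_{\sigma(2)},\deg f_{\sigma(3)})$ realizing the defining degree pattern of one of the reduction types, with $f_{\sigma(1)}$ the coordinate actually being lowered and $f_{\sigma(2)}$, $f_{\sigma(3)}$ tied together by a genuine algebraic relation. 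The goal is to show that this is impossible once one of the three coordinates equals $x_3$.

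The numerical input is immediate: $\deg F=\deg f_1+\deg f_2+1>3$ forces $\deg f_1+\deg f_2\geq 3$, so some coordinate among $f_1,f_2,f_3$ has degree $\geq 2$; in particular the coordinate of largest degree has degree $\geq 2$. I would then extract from the definition of each reduction type the key structural fact that \emph{all three coordinates appearing in the reduction configuration must have degree $\geq 2$} --- the relation and the square--divisibility condition on $\w$--initial forms that make a Shestakov--Umirbaev reduction non--elementary genuinely involve all three coordinates and cannot be produced when one of them is a degree--one variable. Granting this, $x_3$ (of degree $1$) can be neither $f_{\sigma(1)}$ (it does not attain the maximal degree, since some other coordinate already has degree $\geq 2$) nor $f_{\sigma(2)}$ nor $f_{\sigma(3)}$; since $x_3$ must be one of the three, this is a contradiction, and hence $F$ admits an elementary reduction. (The auxiliary linear change causes no trouble: a linear automorphism sends the degree--one coordinate $x_3$ only to another coordinate of degree $1$.)

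The step I expect to be the main obstacle is the middle one: checking, against the precise degree inequalities and initial--form conditions in the definition of each reduction type, that no such reduction tolerates a coordinate which is a variable. This is a finite case analysis, but it must be carried out carefully, including the borderline situations (equalities among the $d_i$, and the interaction of $\sigma$ with the auxiliary linear change). Everything else --- the invocation of the structure theorem, the bookkeeping of degrees, and the final contradiction --- is routine once that lemma is available.
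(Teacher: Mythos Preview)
Your overall strategy is correct and is exactly how the paper proceeds: invoke Theorem~\ref{thm:SU} to conclude that, failing an elementary reduction, $F$ admits a Shestakov--Umirbaev reduction, and then show this is impossible when $f_3=x_3$. Your key claim is also correct, and in fact much easier than you fear. In this paper's formulation (conditions (SU1)--(SU6) and properties (P1)--(P7) in Section~\ref{sect:SUred}) a Shestakov--Umirbaev reduction means that $(F_\sigma,G_\sigma)$ satisfies the SU condition for some \emph{permutation} $\sigma\in\mathfrak{S}_3$; no auxiliary linear change enters, so your parenthetical worry is moot, and one of $f_{\sigma(1)},f_{\sigma(2)},f_{\sigma(3)}$ is literally $x_3$. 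Now (P1) gives $2\deg g_{\sigma(1)}=s\deg g_{\sigma(2)}$ with $s\geq 3$ odd, forcing $\deg g_{\sigma(2)}$ to be even and hence $\delta:=\tfrac12\deg g_{\sigma(2)}\geq 1$; then (P7) gives $\deg f_{\sigma(i)}>\delta\geq 1$ for $i=1,2,3$, contradicting $\deg x_3=1$. No case analysis over ``reduction types'' is needed.

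The paper does not prove Theorem~\ref{thm:oSU} on its own (it is quoted from \cite{SU}); what it proves is the weighted generalization Theorem~\ref{thm:noSUred}, via Lemma~\ref{lem:noSUred}, and that argument is genuinely different from yours. For an arbitrary $\w\in(\Gamma_+)^3$ there is no useful lower bound on $\delta$ in terms of $w_3$, so the short trick above breaks down. Instead the paper splits into the cases $\sigma(1)=3$ and $\sigma(1)\neq 3$ and, exploiting $\degw f_3=w_3$, forces enough of the relevant $\w$-initial forms to lie in $k[x_1,x_2]$; the weighted van~der~Kulk theorem (Lemma~\ref{lem:jvdk} or Theorem~\ref{thm:jvdk}) then yields a relation of the shape $h_1^{\w}\approx (h_2^{\w})^u$ with $u\in\N$, which is incompatible with the odd-exponent relation $(g_{\sigma(1)}^{\w})^2\approx(g_{\sigma(2)}^{\w})^s$ of (SU3). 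Your argument is shorter and entirely adequate for the standard grading; the paper's route is what the general weight demands.
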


Here, 
we simply say ``elementary reduction" 
when $\Gamma =\Z $ and $\w =(1,\ldots ,1)$. 
It is natural to ask whether a similar statement 
holds for general weights. 
We define $S(\w ,k)$ to be the set of 
$F\in \Aut _k\kx $ for $n=3$ 
such that $\degw F>|\w |$, 
and $f_3=\alpha x_3+p$ 
for some $\alpha \in k\sm \zs $ 
and $p\in k[x_1,x_2]$ with $\degw p\leq w_3$. 
By definition, 
we have $\degw f_3=w_3$ 
and $f_3^{\w }=\alpha x_3+p'$ 
for such $F$, 
where $p':=p^{\w }$ if $\degw p=w_3$, 
and $p':=0$ otherwise.

Recently, 
the author \cite{SUineq}, \cite{tame3} 
generalized the Shestakov-Umirbaev theory. 
By means of this theory, 
we prove the following theorem in Section~\ref{sect:SUred}. 
This gives an affirmative answer to the question above.

\begin{thm}\label{thm:noSUred}
Assume that $k$ is a field of characteristic zero, 
and $\w $ is an element of $(\Gamma _+)^3$. 
Then, 
every element of $S(\w ,k)\cap \T _3(k)$ 
admits an elementary reduction 
for the weight $\w $. 
\end{thm}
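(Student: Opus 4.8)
The plan is to reduce Theorem~\ref{thm:noSUred} to the author's generalized Shestakov--Umirbaev theory \cite{SUineq}, \cite{tame3}, which provides a classification of the reductions available to a tame automorphism whose $\w$-degree exceeds $|\w|$: any such $F\in\T_3(k)$ must admit either an elementary reduction or a Shestakov--Umirbaev--type reduction (a ``reduction of type (I)--(IV)'', in the terminology of those papers) for the weight $\w$. So it suffices to show that an $F\in S(\w,k)\cap\T_3(k)$ cannot admit a Shestakov--Umirbaev reduction but no elementary reduction. First I would record the structural data of $F\in S(\w,k)$: by definition $\degw f_3=w_3$ with $f_3^{\w}=\alpha x_3+p'$ where $p'\in k[x_1,x_2]$ is $\w$-homogeneous of $\w$-degree $w_3$ (or zero), and $\degw F>|\w|$ forces $\degw f_1+\degw f_2>w_1+w_2$. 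Applying Theorem~\ref{thm:weighted degree}(i) with $I=J=\{1,2,3\}$ (or Corollary~\ref{cor:weighted degree}) gives an index $i$ with $\degw f_j\in\sum_{l\neq i}\Zn w_l$ for all $j$; I expect to use this together with $\w\in(\Gamma_+)^3$ to pin down which of $\degw f_1,\degw f_2$ is the larger and to control divisibility of the initial forms.

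The heart of the matter is the Shestakov--Umirbaev case. A reduction of Shestakov--Umirbaev type involves a pair of the three coordinates, say $f_a,f_b$, subject to strong constraints: after an affine normalization one has $f_a^{\w},f_b^{\w}$ algebraically dependent, with one $\w$-degree an appropriate multiple of a ``half-degree'' of the other, and the third coordinate $f_c$ playing the passive role. I would argue that in $S(\w,k)$ the coordinate $f_3$ is forced into this passive slot: since $f_3^{\w}=\alpha x_3+p'$ with $p'$ a polynomial in $x_1,x_2$ only, $f_3^{\w}$ is essentially a ``linear-in-$x_3$'' form, so $x_3$ appears in $f_3^{\w}$ with multiplicity one and $\{f_1^{\w},f_2^{\w}\}$ is the only candidate pair for the algebraic dependence required by a Shestakov--Umirbaev reduction. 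Then I would invoke the quantitative $\w$-degree inequalities that such a reduction must satisfy --- these are exactly the generalized Shestakov--Umirbaev inequalities of \cite{SUineq} --- and show they are incompatible with the constraint $\degw p\le w_3$ on the $x_1,x_2$-tail of $f_3$, i.e. with $\degw f_3=w_3$ being as small as possible among automorphisms of this shape. Concretely, a Shestakov--Umirbaev reduction on the pair $(f_1,f_2)$ with passive $f_3$ requires $\degw f_3$ to be bounded below in terms of $\degw f_1$ and $\degw f_2$, and $\degw f_1+\degw f_2>w_1+w_2$ combined with $\w\in(\Gamma_+)^3$ would push $\degw f_3$ strictly above $w_3$, a contradiction.

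I would organize the write-up as: (1) quote the reduction theorem from \cite{tame3} in the exact form ``$F\in\T_3(k)$, $\degw F>|\w|$ $\Rightarrow$ $F$ admits an elementary or a Shestakov--Umirbaev reduction for $\w$''; (2) assume for contradiction that $F\in S(\w,k)\cap\T_3(k)$ admits no elementary reduction, hence admits a Shestakov--Umirbaev reduction; (3) analyze the possible active pairs, using the special form of $f_3^{\w}$ to eliminate every pair containing $f_3$; (4) for the remaining pair $\{f_1,f_2\}$, extract the $\w$-degree inequality the reduction imposes on the passive coordinate $f_3$, and derive $\degw f_3>w_3$, contradicting $\degw f_3=w_3$. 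The main obstacle I anticipate is step (3)--(4): making precise, at the level of $\w$-homogeneous forms over an arbitrary field of characteristic zero, why a form of the shape $\alpha x_3+p(x_1,x_2)$ cannot occupy the active role in any of the Shestakov--Umirbaev configurations, and then checking that the numerical inequalities of the generalized theory genuinely clash with $\degw p\le w_3$. This requires careful bookkeeping with the ``$\w$-degree of the pair'' quantities from \cite{SUineq} and possibly a short case division according to whether $p'=0$ or $\degw p=w_3$, and according to which of $\degw f_1,\degw f_2$ dominates; but no genuinely new idea beyond the generalized Shestakov--Umirbaev inequalities and Theorem~\ref{thm:weighted degree} should be needed.
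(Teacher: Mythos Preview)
Your overall reduction is right: the paper also quotes the main theorem of \cite{tame3} (stated here as Theorem~\ref{thm:SU}) and then proves the lemma that no element of $S(\w,k)$ admits a Shestakov--Umirbaev reduction. But steps (3) and (4) of your plan, as written, do not go through, and the paper's actual contradiction mechanism is quite different from the one you sketch.

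First, step (3). You argue that because $f_3^{\w}=\alpha x_3+p'$ is linear in $x_3$, the coordinate $f_3$ cannot sit in the ``active'' slot of an SU reduction. But condition (SU3) concerns $g_{\sigma(1)}^{\w}$ and $g_{\sigma(2)}^{\w}$, not $f_{\sigma(1)}^{\w},f_{\sigma(2)}^{\w}$, and by (SU1) the $g$'s differ from the $f$'s by polynomials in $f_{\sigma(3)}$. The special shape of $f_3^{\w}$ alone does not rule out $\sigma(1)=3$ or $\sigma(2)=3$; indeed the paper treats the case $\sigma(1)=3$ at length, and it requires real work (including a subcase split on whether $\degw f_{\sigma(1)}=\degw g_{\sigma(1)}$, and use of (P5)) to dispose of it.

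Second, step (4). You propose to derive $\degw f_3>w_3$ from the generalized SU inequalities. The inequalities in (P7) give only $\degw f_{\sigma(3)}>\delta=(1/2)\degw g_{\sigma(2)}$, which is a \emph{lower} bound on $\degw f_3$ only when $\sigma(3)=3$, and even then it yields $w_3>\delta$, not $w_3<\text{something}$. The paper in fact \emph{uses} $\delta<w_3$ in the opposite direction: it implies $\deg_{x_3}g_{\sigma(2)}^{\w}\le 1$, which combined with (SU3) forces $g_{\sigma(1)}^{\w},g_{\sigma(2)}^{\w}\in k[x_1,x_2]$. From there the contradiction is obtained not from a degree inequality on $f_3$ but from van der Kulk--type structure: Lemma~\ref{lem:jvdk} (and, in the case $\sigma(1)\neq 3$, Theorem~\ref{thm:jvdk}) shows that for a two-variable automorphism one initial form is an integral power of the other, which is incompatible with (SU3)'s requirement that $(g_{\sigma(1)}^{\w})^2\approx(g_{\sigma(2)}^{\w})^s$ for an \emph{odd} $s\ge 3$. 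This power-relation conflict, not a numerical bound on $\degw f_3$, is the engine of the proof. Your proposal would need to be rebuilt around this mechanism and a genuine case split on $\sigma$.
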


The following theorem 
is also proved in Section~\ref{sect:SUred}. 
Part (i) of this theorem 
is a generalization of 
Proposition~\ref{prop:Karas345}, 
while (ii) is a necessary condition 
for tameness of automorphisms 
obtained from Theorem~\ref{thm:noSUred}.

\begin{thm}\label{thm:ER}
Assume that $n=3$ and $k$ is a domain. 
Then, 
the following assertions hold for each 
$\w \in (\Gamma _+)^3$ and $F\in S(\w ,k)$ 
with $\mdegw F=(d_1,d_2,d_3)$$:$

\noindent{\rm (i)} 
If $d_i$ belongs to 
$\sum _{j\neq i}\Zn d_j$ for some $1\leq i\leq 3$, 
then there exists $G\in \E _3^{\w }(\kappa )$ 
such that $g_3=x_3$ and $\mdegw G=(d_1,d_2,d_3)$ 
for each commutative ring $\kappa $.

\nd {\rm (ii)} 
If $k$ is of characteristic zero 
and $F$ belongs to $\T _3(k)$, 
then $d_i$ belongs to 
$\sum _{j\neq i}\Zn d_j$ 
for some $1\leq i\leq 3$. 
\end{thm}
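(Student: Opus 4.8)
The plan is to prove Theorem~\ref{thm:ER} by treating the two parts separately, since they rest on quite different machinery already developed in the excerpt.

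For part (i), the strategy is to mimic and generalize the argument behind Proposition~\ref{prop:Karas345}, but now working over an arbitrary commutative ring $\kappa$ and with the weighted setting in place of the standard one. Given $F\in S(\w ,k)$ with $\mdegw F=(d_1,d_2,d_3)$ and the divisibility relation $d_i\in \sum _{j\neq i}\Zn d_j$ for some $i$, I would first observe that because $F\in S(\w ,k)$ we automatically have $d_3=\degw f_3=w_3$, which pins down one of the coordinates. The three cases $i=1$, $i=2$, $i=3$ should be handled by hand: write $d_i=\sum_{j\neq i} a_j d_j$ with $a_j\in\Zn$, and build $G$ explicitly as a composition of elementary automorphisms fixing $x_3$ so that the $i$-th coordinate acquires $\w$-degree exactly $d_i$ via a monomial $\prod_{j\neq i} x_j^{a_j}$-type correction, while the other two coordinates keep their prescribed degrees $d_j$. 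The key point to check is that the relevant initial forms are nonzero divisors of $\kappa[\x]$ — which they are, since over any $\kappa$ the monomials and the variables $x_j$ are nonzero divisors — so that $G\in \E_3^{\w}(\kappa)$ and the degree formula \eqref{eq:prod deg} applies at each stage; this is what lets the $\w$-degrees add up correctly. Taking the intersection over all $\kappa$ then gives $(d_1,d_2,d_3)\in|\E_3^{\w}|$ as a byproduct, but the statement only asks for the existence of $G$ for each fixed $\kappa$, which is cleaner.

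For part (ii), I would argue by contradiction: suppose $k$ has characteristic zero, $F\in S(\w ,k)\cap\T_3(k)$, and $d_i\notin\sum_{j\neq i}\Zn d_j$ for every $i$. Since $F\in S(\w ,k)$ we have in particular $\degw F>|\w|$, so by Theorem~\ref{thm:noSUred} the automorphism $F$ admits an elementary reduction for the weight $\w$, i.e.\ $\degw F\circ E<\degw F$ for some elementary $E$. The plan is to analyze what an elementary reduction does at the level of $\w$-degrees and extract from it precisely a divisibility relation of the forbidden type. Concretely, if $E$ modifies the $l$-th coordinate by subtracting a polynomial $q\in k[\{x_i\mid i\neq l\}]$ from $f_l$, then for the $\w$-degree to drop we need $\degw(f_l) = \degw(q(f_1,\dots,\widehat{f_l},\dots,f_3))$ and the corresponding initial forms to cancel; expanding $q$ into monomials and using \eqref{eq:prod deg} shows that $\degw f_l = \sum_{j\neq l} a_j\degw f_j = \sum_{j\neq l} a_j d_j$ for some exponents $a_j\in\Zn$ coming from a monomial of $q$ realizing the top degree. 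That is exactly $d_l\in\sum_{j\neq l}\Zn d_j$, contradicting our assumption. One subtlety here is the possibility $\degw f_l\le$ (some $w_i$), i.e.\ that the reduction is "trivial" in the sense that $F$ already has small degree; but $F\in S(\w,k)$ forces $\degw F>|\w|$, and one should check that this, together with the shape of $f_3$, prevents the degenerate situation, or else handle it directly using that a coordinate of $\w$-degree equal to some $w_i$ already satisfies a divisibility relation through Theorem~\ref{thm:deg coord}.

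The main obstacle I anticipate is the bookkeeping in part (ii): an elementary reduction for the weight $\w$ is a statement about a single composition $F\circ E$ decreasing $\degw F$, and one must be careful that the drop in total $\w$-degree is genuinely caused by cancellation of leading forms in one coordinate rather than by some more subtle interaction — and that the monomial of $q$ witnessing the leading-term cancellation really produces a relation among the $d_j$ with nonnegative integer coefficients rather than, say, a relation that also involves $d_l$ itself on the right-hand side. Making the passage from "$\degw F\circ E<\degw F$" to "$d_l=\sum_{j\neq l}a_j d_j$" fully rigorous — in particular ruling out the case where several monomials of $q$ conspire, or where $\degw q(f_1,\dots)$ exceeds $\degw f_l$ (which would make $F\circ E$ have \emph{larger} degree, not a reduction) — is where the care is needed. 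Everything in part (i) is a matter of writing down the right composition of elementary maps and invoking \eqref{eq:prod deg}, so that part should go through routinely once part (ii)'s analysis of elementary reductions is in hand, since the two share the same underlying computation read in opposite directions.
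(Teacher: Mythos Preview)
Your plan for part (ii) has a genuine gap at exactly the point you flag as ``where the care is needed''. From $\degw(f_l - q(f_{i_1},f_{i_2}))<\degw f_l$ you correctly get $f_l^{\w}=q(f_{i_1},f_{i_2})^{\w}$, but the step ``expanding $q$ into monomials\ldots shows $d_l=\sum_{j\neq l}a_jd_j$'' only works when $f_{i_1}^{\w}$ and $f_{i_2}^{\w}$ are algebraically \emph{independent} (so that Corollary~\ref{cor:IP} applies and $\degw q(f_{i_1},f_{i_2})=\deg_{\w_F}q$). When they are dependent, monomials of $q$ can and do conspire, and there is no monomial of $q$ witnessing $d_l$ as a nonnegative combination of $d_{i_1},d_{i_2}$. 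The paper resolves this case with two separate tools: if $l\neq 3$ then one of $i_1,i_2$ equals $3$ and $f_3^{\w}=x_3$, forcing the other initial form into $k[x_3]$ and giving $d_{i_1}\in\Zn d_{i_2}$; if $l=3$ one needs the generalized Shestakov--Umirbaev inequality (Lemma~\ref{lem:SU ineq}) together with Theorem~\ref{thm:jvdk} to extract a relation $d_1\in\Zn d_2$ or $d_2\in\Zn d_1$. Neither ingredient appears in your outline, and without them the argument does not close.

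Your plan for part (i) is also incomplete, and not as routine as you suggest. Writing $d_i=\sum_{j\neq i}a_jd_j$ lets you build the $i$-th coordinate from the others, but you first need coordinates $g_j$ (for $j\neq i$, $j\neq 3$) over $\kappa$ with $\degw g_j=d_j$ and $g_3=x_3$. The hypothesis gives you no direct expression of $d_1,d_2$ in terms of $w_1,w_2,w_3$; that information has to be extracted from the fact that $F$ is an actual automorphism of $k[\x]$. The paper does this in two ways depending on the situation: when $f_1$ or $f_2$ lies in a two-variable subring it invokes Proposition~\ref{prop:sc} (which rests on the two-variable Jung--van der Kulk theory); otherwise it applies Theorem~\ref{thm:weighted degree}~(i)(b) to $F$ to find $s\in\{1,2\}$ with $d_2\in\Zn w_r+\Zn w_3$ (where $\{r,s\}=\{1,2\}$), and only then can one write down the explicit $G$. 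Simply invoking \eqref{eq:prod deg} is not enough to manufacture the required $g_j$.
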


The author would like to thank 
Professors Amartya K. Dutta and Neena Gupta 
for helpful discussions on stable coordinates, 
and for pointing out that Theorem~\ref{thm:Yu} 
is implicit in \cite{Asanuma}.

\section{Initial principle}\label{sect:IP}
\setcounter{equation}{0}

Throughout this section, 
let $n\in \N $ and $\w \in \Gamma ^n$ be arbitrary. 
For given elements of $\kx $, 
we know what are the $\w $-degree 
and $\w $-initial form 
of their {\it product} thanks to (\ref{eq:prod deg}), 
whereas those for the {\it sum} is unclear in general. 
The purpose of this section 
is to introduce basic techniques 
for treating the $\w $-degree and $\w $-initial form of 
the sum of polynomials.

The principle stated in the following 
lemma lies behind useful results 
proved in this and the next section. 
We omit the proof of this lemma, 
since the statement is obvious.

\begin{lem}\label{lem:IP}
For $(0,\ldots ,0)\neq (f_1,\ldots ,f_l)\in \kx ^l$ 
with $l\geq 1$, 
we set 
$$
\delta =\max \{ \degw f_i\mid i=1,\ldots ,l\} 
\quad\text{and}\quad 
S=\{ i\mid \degw f_i=\delta \} . 
$$
Then, 
the following assertions hold$:$ 

\noindent{\rm (i)} 
$\degw {(f_1+\cdots +f_l)}\leq \delta $. 

\noindent{\rm (ii)} 
$\degw {(f_1+\cdots +f_l)}=\delta $ 
if and only if 
$\sum _{i\in S}f_i^{\w }\neq 0$. 

\noindent{\rm (iii)} 
If the equivalent conditions in {\rm (ii)} 
are satisfied, 
then we have 
$$
(f_1+\cdots +f_l)^{\w }=\sum _{i\in S}f_i^{\w }. 
$$
\end{lem}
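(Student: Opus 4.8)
The plan is to argue directly from the definition of the $\w$-weighted $\Gamma$-grading, so that essentially no machinery is needed. First I would write each $f_i$ in its homogeneous decomposition $f_i=\sum_{\gamma\in\Gamma}(f_i)_\gamma$ with $(f_i)_\gamma\in\kx_\gamma$, recording three elementary facts: $(f_i)_\gamma=0$ whenever $\gamma>\degw f_i$; $(f_i)_{\degw f_i}=f_i^\w$ when $f_i\neq0$; and $\delta\in\Gamma$, since the hypothesis guarantees that some $f_i\neq0$. Because $\kx=\bigoplus_\gamma\kx_\gamma$, the homogeneous decomposition of the sum is simply $f_1+\cdots+f_l=\sum_\gamma\bigl(\sum_{i=1}^l(f_i)_\gamma\bigr)$ with $\sum_{i=1}^l(f_i)_\gamma\in\kx_\gamma$, and this is \emph{the} decomposition of $f_1+\cdots+f_l$ into its graded pieces.

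Next I would read off from this decomposition the two facts I need. For any $\gamma>\delta$ and any $i$ we have $\gamma>\degw f_i$, hence $(f_i)_\gamma=0$; therefore the $\gamma$-component of $f_1+\cdots+f_l$ vanishes for every $\gamma>\delta$, which is exactly assertion (i). For $\gamma=\delta$, the term $(f_i)_\delta$ equals $f_i^\w$ if $i\in S$ and equals $0$ if $i\notin S$ (because then $\degw f_i<\delta$); hence the $\delta$-component of $f_1+\cdots+f_l$ equals $\sum_{i\in S}f_i^\w$.

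Finally I would invoke the definitions of $\degw$ and of the $\w$-initial form. By (i), $\degw(f_1+\cdots+f_l)\leq\delta$, and by the definition of the $\w$-degree, equality holds precisely when the $\delta$-component of $f_1+\cdots+f_l$ is nonzero, i.e.\ precisely when $\sum_{i\in S}f_i^\w\neq0$; this is (ii). When that condition holds, the $\w$-initial form of $f_1+\cdots+f_l$ is by definition its $\delta$-component, namely $\sum_{i\in S}f_i^\w$, which is (iii). The only points worth a word of care—rather than genuine difficulty—are the degenerate case $f_1+\cdots+f_l=0$, in which $\degw(f_1+\cdots+f_l)=-\infty\neq\delta$ while simultaneously $\sum_{i\in S}f_i^\w=0$, so that both sides of the equivalence in (ii) are false and consistency is preserved, and the observation that indices $i\notin S$ contribute nothing in degree $\delta$. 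There is no real obstacle here, which is precisely why the paper labels the statement obvious and omits the proof.
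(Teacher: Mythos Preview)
Your argument is correct and is exactly the direct verification from the graded decomposition that the paper has in mind; indeed the paper omits the proof entirely, calling the statement obvious. Your handling of the degenerate case $f_1+\cdots+f_l=0$ and of indices $i\notin S$ is careful and complete.
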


For an $r$-tuple $F=(f_1,\ldots ,f_r)$ 
of elements of $\kx $ with $r\in \N $, 
we define the substitution map 
$$
k[x_1,\ldots ,x_r]\ni 
p(x_1,\ldots ,x_r)
\mapsto p(f_1,\ldots ,f_r)\in \kx . 
$$
As in the case of automorphisms, 
we denote this map by the same symbol $F$. 
When $f_i\neq 0$ for $i=1,\ldots ,r$, 
we define 
$$
F^{\w }=(f_1^{\w },\ldots ,f_r^{\w })
\quad \text{and}\quad 
\w _F=(\degw f_1,\ldots ,\degw f_r). 
$$

As a consequence of Lemma~\ref{lem:IP}, 
we obtain the following proposition.

\begin{prop}\label{prop:IP}
For each $F\in (\kx \sm \zs )^r$ and 
$g\in k[x_1,\ldots ,x_r]\sm \zs $, 
the following assertions hold$:$ 

\noindent{\rm (i)} 
$\degw F(g)\leq \deg _{\w _F}g$. 

\noindent{\rm (ii)} 
$\degw F(g)=\deg _{\w _F}g$ 
if and only if $F^{\w }(g^{\w _F})\neq 0$. 

\noindent{\rm (iii)} 
If the equivalent conditions in {\rm (ii)} 
are satisfied, 
then we have $F(g)^{\w }=F^{\w }(g^{\w _F})$. 
\end{prop}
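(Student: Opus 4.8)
The plan is to deduce all three assertions at once from Lemma~\ref{lem:IP}, applied to the monomials of $g$ after substitution by $F$.

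First I would expand $g=\sum_{\beta}c_{\beta}m_{\beta}$, where $\beta=(\beta_1,\dots,\beta_r)$ runs over $\Zn ^r$, the coefficients $c_{\beta}\in k$ are almost all zero, and $m_{\beta}:=x_1^{\beta_1}\cdots x_r^{\beta_r}$. Setting $M_{\beta}:=F(m_{\beta})=f_1^{\beta_1}\cdots f_r^{\beta_r}$, one has $F(g)=\sum_{\beta}c_{\beta}M_{\beta}$. Since $k$ is a domain, $\kx $ is a domain, so each $f_i^{\w}$ is a nonzero divisor of $\kx $; iterating (\ref{eq:prod deg}) over the factors then gives
$$
\degw M_{\beta}=\sum_{i=1}^{r}\beta_i\,\degw f_i=\deg_{\w _F}m_{\beta}
\qquad\text{and}\qquad
M_{\beta}^{\w}=\prod_{i=1}^{r}(f_i^{\w})^{\beta_i}=F^{\w}(m_{\beta})
$$
for every $\beta$, and in particular $M_{\beta}\neq 0$. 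In words, evaluation by $F$ carries each $\w _F$-homogeneous monomial of $g$ to a $\w $-homogeneous element of $\kx $ of the matching degree.

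Next I would apply Lemma~\ref{lem:IP} to the family $(c_{\beta}M_{\beta})_{\beta}$, indexed by $\{\beta\mid c_{\beta}\neq 0\}$, which is nonempty since $g\neq 0$ and whose entries are all nonzero by the above. With $\delta $ and $S$ as in that lemma, the displayed identities identify $\delta $ with $\deg_{\w _F}g$ and $S$ with $\supp g^{\w _F}$, so that $g^{\w _F}=\sum_{\beta\in S}c_{\beta}m_{\beta}$. Now part (i) of Lemma~\ref{lem:IP} gives $\degw F(g)\leq \delta =\deg_{\w _F}g$, which is (i); part (ii) gives $\degw F(g)=\delta $ if and only if $\sum_{\beta\in S}(c_{\beta}M_{\beta})^{\w}\neq 0$, and since $\sum_{\beta\in S}(c_{\beta}M_{\beta})^{\w}=\sum_{\beta\in S}c_{\beta}F^{\w}(m_{\beta})=F^{\w}(g^{\w _F})$, this is precisely the condition in (ii); and when it holds, part (iii) gives $F(g)^{\w}=\sum_{\beta\in S}(c_{\beta}M_{\beta})^{\w}=F^{\w}(g^{\w _F})$, which is (iii).

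I do not expect a genuine obstacle here, since the argument is essentially bookkeeping. The one step that actually uses a hypothesis is the displayed identity $\degw M_{\beta}=\deg_{\w _F}m_{\beta}$ together with its initial-form refinement $M_{\beta}^{\w}=F^{\w}(m_{\beta})$: the assumption that $k$ is a domain is what lets (\ref{eq:prod deg}) be applied at each factor, every partial product having a nonzero-divisor $\w $-initial form. Granting this, the matching $S=\supp g^{\w _F}$ is automatic, and the three assertions fall out termwise from the corresponding parts of Lemma~\ref{lem:IP}.
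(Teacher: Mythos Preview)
Your proof is correct and follows essentially the same approach as the paper's: decompose $g$ into monomials, compute the $\w$-degree and $\w$-initial form of each substituted monomial via (\ref{eq:prod deg}), then feed the resulting family into Lemma~\ref{lem:IP} and identify $\delta$ with $\deg_{\w_F}g$ and $\sum_{i\in S}p_i^{\w}$ with $F^{\w}(g^{\w_F})$. One small wording slip: your ``in words'' summary says $F$ carries each monomial to a $\w$-\emph{homogeneous} element, but $M_\beta$ need not be $\w$-homogeneous---only $M_\beta^{\w}$ is; this does not affect the argument, since you only use the displayed identities for $\degw M_\beta$ and $M_\beta^{\w}$.
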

\begin{proof}
Write 
$g=\sum _{i_1,\ldots ,i_r}
a_{i_1,\ldots ,i_r}
x_1^{i_1}\cdots x_r^{i_r}$ 
with $a_{i_1,\ldots ,i_r}\in k$, 
and set 
$$
q_i=a_{i_1,\ldots ,i_r}
x_1^{i_1}\cdots x_r^{i_r}
\quad\text{and}\quad 
p_i=a_{i_1,\ldots ,i_r}
f_1^{i_1}\cdots f_r^{i_r}
$$
for each $i=(i_1,\ldots ,i_r)$. 
Then, 
we have 
$g=\sum _iq_i$ and $F(g)=\sum _ip_i$. 
Define $\delta =\max \{ \degw p_i\mid i\} $ 
and $S=\{ i\mid \degw p_i=\delta \}$. 
By applying 
Lemma~\ref{lem:IP} to $(p_i)_i$, 
we obtain the following statements: 

\smallskip 

\noindent{\rm (i$'$)} 
$\degw F(g)\leq \delta $. 

\noindent{\rm (ii$'$)} 
$\degw F(g)=\delta $ 
if and only if $h:=\sum _{i\in S}p_i^{\w }$ 
is nonzero. 

\noindent{\rm (iii$'$)} 
If the equivalent conditions in {\rm (ii$'$)} 
are satisfied, 
then we have $F(g)^{\w }=h$. 

\smallskip 

\nd 
Hence, 
it suffices to show that 
$\deg _{\w _F}g=\delta $ 
and $F^{\w }(g^{\w _F})=h$. 
Note that 
\begin{gather}
\degw p_i
=\sum _{l=1}^ri_l\degw f_l
=\sum _{l=1}^ri_l\deg _{\w _F}x_l
=\deg _{\w _F}q_i \label{eq:IP1} \\
F^{\w }(q_i)
=a_{i_1,\ldots ,i_r}
(f_1^{\w })^{i_1}\cdots (f_r^{\w })^{i_r} 
=(a_{i_1,\ldots ,i_r}f_1^{i_1}\cdots f_r^{i_r})^{\w } 
=p_i^{\w } \label{eq:IP2}
\end{gather}
for each $i=(i_1,\ldots ,i_r)$ 
with $a_{i_1,\ldots ,i_r}\neq 0$. 
Hence, we have 
$$
\deg _{\w _F}g
=\max \{ \deg _{\w _F}q_i\mid i\} 
=\max \{ \degw p_i\mid i\} =\delta 
$$
by (\ref{eq:IP1}). 
Thus, 
$i$ belongs to $S$ if and only if 
$\deg _{\w _F}q_i=\deg _{\w _F}g$. 
This implies that $g^{\w _F}=\sum _{i\in S}q_i$. 
Therefore, 
we conclude that 
$$
F^{\w }(g^{\w _F})
=F^{\w }\left( 
\sum _{i\in S}q_i
\right) 
=\sum _{i\in S}F^{\w }(q_i)
=\sum _{i\in S}p_i^{\w }=h
$$
by (\ref{eq:IP2}). 
\end{proof}

For each $k$-subalgebra $A$ of $\kx $ 
and $\w \in \Gamma ^n$, 
we define $A^{\w }$ 
to be the $k$-submodule of $\kx $ 
generated by $\{ f^{\w }\mid f\in A\} $. 
In view of (\ref{eq:prod deg}), 
we see that $A^{\w }$ 
is a $k$-subalgebra of $\kx $. 
We call $A^{\w }$ the {\it initial algebra} 
of $A$ for the weight $\w $. 
For $g_1,\ldots ,g_l\in \kx $, 
it is clear that 
$$
k[g_1,\ldots ,g_l]^{\w }
\supset k[g_1^{\w },\ldots ,g_l^{\w }], 
$$
but the equality does not hold in general. 
We mention that the $k$-algebra $A ^{\w }$ 
is not always finitely generated 
even if $A$ is finitely generated 
(see e.g.~\cite{SAGBI}).

We note that $f_1,\ldots ,f_r$ 
are algebraically independent over $k$ 
if and only if the substitution map 
$F:k[x_1,\ldots ,x_r]\to \kx $ 
is injective. 
The following corollary 
is a consequence of Proposition~\ref{prop:IP}.

\begin{cor}\label{cor:IP}
Let $F\in (\kx \sm \zs )^r$ 
be such that $F^{\w }$ is injective. 
Then, 
the following assertions hold$:$

\noindent{\rm (i)} 
$\degw F(g)=\deg _{\w _F}g$ 
and $F(g)^{\w }\!=\!F^{\w }(g^{\w _F})$ 
hold for each $g\!\in \!k[x_1,\ldots ,x_r]$.

\noindent{\rm (ii)} 
$F$ is injective.

\noindent{\rm (iii)} 
$k[f_1,\ldots ,f_r]^{\w }=
k[f_1^{\w },\ldots ,f_r^{\w }]$. 
\end{cor}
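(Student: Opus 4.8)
The plan is to derive Corollary~\ref{cor:IP} directly from Proposition~\ref{prop:IP}, using injectivity of $F^{\w }$ to dispose of the conditional clauses. For part (i), fix $g\in k[x_1,\ldots ,x_r]$; we may assume $g\neq 0$. Then $g^{\w _F}$ is a nonzero element of $k[x_1,\ldots ,x_r]$, so $F^{\w }(g^{\w _F})\neq 0$ because $F^{\w }$ is injective. Hence the equivalent conditions in Proposition~\ref{prop:IP}(ii) are automatically satisfied, and parts (ii) and (iii) of that proposition give $\degw F(g)=\deg _{\w _F}g$ and $F(g)^{\w }=F^{\w }(g^{\w _F})$ at once. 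The only tiny point to check is that $g\mapsto g^{\w _F}$ sends nonzero polynomials to nonzero polynomials, which is immediate from the definition of the weighted initial form.

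For part (ii), I would argue that $F$ is injective by contradiction: if $F(g)=0$ for some $g\neq 0$ in $k[x_1,\ldots ,x_r]$, then on one hand $\degw F(g)=\degw 0=-\infty $, while on the other hand part (i) gives $\degw F(g)=\deg _{\w _F}g$, which is an element of $\Gamma $ (not $-\infty $) since $g\neq 0$. This contradiction shows $F$ is injective. Alternatively one notes that $F(g)^{\w }=F^{\w }(g^{\w _F})\neq 0$ forces $F(g)\neq 0$.

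For part (iii), the inclusion $k[f_1,\ldots ,f_r]^{\w }\supset k[f_1^{\w },\ldots ,f_r^{\w }]$ is the general one already noted in the text before the corollary, so it remains to prove the reverse inclusion. Take any nonzero $h\in k[f_1,\ldots ,f_r]$; write $h=F(g)$ for some $g\in k[x_1,\ldots ,x_r]$, and by part (i) we have $h^{\w }=F(g)^{\w }=F^{\w }(g^{\w _F})$, which manifestly lies in $k[f_1^{\w },\ldots ,f_r^{\w }]$. Since $k[f_1,\ldots ,f_r]^{\w }$ is by definition the $k$-module generated by such $h^{\w }$, this gives $k[f_1,\ldots ,f_r]^{\w }\subset k[f_1^{\w },\ldots ,f_r^{\w }]$, completing the proof.

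I do not expect any genuine obstacle here; the corollary is essentially a repackaging of Proposition~\ref{prop:IP} once one observes that injectivity of $F^{\w }$ makes the nonvanishing hypothesis $F^{\w }(g^{\w _F})\neq 0$ automatic. The only things to be careful about are the degenerate case $g=0$ (handle it separately, where all three statements are trivial) and making sure the logical direction in part (ii) is set up cleanly. No new ideas beyond Proposition~\ref{prop:IP} and the elementary fact that $(\cdot )^{\w _F}$ preserves nonvanishing should be needed.
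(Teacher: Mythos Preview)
Your proof is correct and follows essentially the same route as the paper's own argument: handle $g=0$ separately, invoke injectivity of $F^{\w}$ to guarantee $F^{\w}(g^{\w_F})\neq 0$, and then read off (i)--(iii) from Proposition~\ref{prop:IP} exactly as you describe. The paper's proof of (ii) uses your first formulation (the degree contradiction), and its proof of (iii) is word-for-word the same as yours.
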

\begin{proof}
(i) The assertion is obvious if $g=0$. 
So assume that $g\neq 0$. 
Then, 
we have $g^{\w _F}\neq 0$, 
and so $F^{\w }(g^{\w _F})\neq 0$ 
by the injectivity of $F^{\w }$. 
Hence, 
we get $\degw F(g)=\deg _{\w _F}g$ 
and $F(g)^{\w }=F^{\w }(g^{\w _F})$ 
by Proposition~\ref{prop:IP} (ii) and (iii).

(ii) 
If $F(g)=0$ for $g\in k[x_1,\ldots ,x_r]$, 
then we have 
$\deg _{\w _F}g=\degw F(g)=-\infty $ by (i). 
This implies that $g=0$. 
Therefore, 
$F$ is injective.

(iii) 
``$\supset $" is clear as mentioned above. 
To show ``$\subset $", 
it suffices to check that 
$f^{\w }$ belongs to 
$k[f_1^{\w },\ldots ,f_r^{\w }]$ 
for each $f\in k[f_1,\ldots ,f_r]$. 
Let $g$ be an element of $k[x_1,\ldots ,x_r]$ 
such that $f=F(g)$. 
Then, 
$f^{\w }$ is equal to $F^{\w }(g^{\w _F})$ by (i), 
and hence belongs to 
$k[f_1^{\w },\ldots ,f_r^{\w }]$. 
This proves ``$\subset $". 
\end{proof}

We remark that, 
if $w_1,\ldots ,w_n$ are linearly independent over $\Z $, 
then $f^{\w }$ is a monomial for each $f\in \kx \sm \zs $, 
since distinct monomials have distinct $\w $-degrees. 
Hence, 
we have the following corollary to 
Proposition~\ref{prop:IP}.

\begin{cor}\label{cor:indep deg}
If $\degw f_1,\ldots ,\degw f_r$ 
are linearly independent over $\Z $ 
for $F\in (\kx \sm \zs )^r$, 
then $F^{\w }$ is injective. 
\end{cor}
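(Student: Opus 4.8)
The plan is to derive Corollary~\ref{cor:indep deg} directly from
Corollary~\ref{cor:IP}, reducing everything to showing that $F^{\w}$
is injective as a substitution map $k[x_1,\ldots ,x_r]\to \kx$. The
crucial observation, already flagged in the paragraph preceding the
statement, is that when $w_1,\ldots ,w_n$ are linearly independent over
$\Z$, the $\w$-initial form $f^{\w}$ of any nonzero $f\in \kx$ is a
single monomial: distinct monomials $x_1^{a_1}\cdots x_n^{a_n}$ have
distinct $\w$-degrees $\sum_i a_i w_i$, so no cancellation can make two
of them share the top degree, and hence $f_{\delta}$ consists of exactly
one term for $\delta=\degw f$.

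First I would reduce the hypothesis. The assumption is that
$\degw f_1,\ldots ,\degw f_r$ are linearly independent over $\Z$. Each
$f_i^{\w}$ is a nonzero monomial, say $f_i^{\w}=c_i M_i$ with
$c_i\in k\sm\zs$ and $M_i=x_1^{a_{i1}}\cdots x_n^{a_{in}}$, and then
$\degw f_i=\sum_j a_{ij}w_j$. I would next argue that the monomials
$M_1,\ldots ,M_r$ are multiplicatively independent, i.e., the only
solution of $M_1^{e_1}\cdots M_r^{e_r}=1$ with $e_i\in\Z$ is
$e_1=\cdots=e_r=0$: taking $\w$-degrees, such a relation forces
$\sum_i e_i\degw f_i=0$, which by linear independence of the
$\degw f_i$ gives all $e_i=0$.

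Then I would show $F^{\w}$ is injective. Let $g\in k[x_1,\ldots ,x_r]$
with $g\neq 0$; write $g=\sum_{e} b_e x_1^{e_1}\cdots x_r^{e_r}$.
Applying $F^{\w}$ sends the monomial $x_1^{e_1}\cdots x_r^{e_r}$ to
$c_1^{e_1}\cdots c_r^{e_r}\,M_1^{e_1}\cdots M_r^{e_r}$, and by the
multiplicative independence just established, distinct exponent vectors
$e$ give distinct monomials $M_1^{e_1}\cdots M_r^{e_r}$ in $\kx$. (One
must note here that since the $M_i$ themselves involve only nonnegative
exponents and are genuine monomials in $\kx$, the products
$M_1^{e_1}\cdots M_r^{e_r}$ for $e_i\in\Zn$ are honest monomials in
$\kx$ and are pairwise distinct.) Hence no two terms of $F^{\w}(g)$ can
collapse, and since $k$ is a domain the coefficients
$b_e c_1^{e_1}\cdots c_r^{e_r}$ remain nonzero; therefore
$F^{\w}(g)\neq 0$, so $F^{\w}$ is injective. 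With $F^{\w}$ injective,
Corollary~\ref{cor:IP} applies and in particular (together with its
part (ii)) $F$ is injective, which also recovers the fact that
$f_1,\ldots ,f_r$ are algebraically independent over $k$; but the
statement we need is precisely the injectivity of $F^{\w}$, so we are
done.

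The main obstacle I anticipate is the bookkeeping around the
exponent vectors: one has to be careful that passing from "the
$\degw f_i$ are $\Z$-linearly independent" to "the monomials $M_i$ are
$\Z$-multiplicatively independent" is valid — this uses that the
monomial $M_i$ is uniquely determined by its $\w$-degree only because
$w_1,\ldots,w_n$ are $\Z$-linearly independent, so one really needs
both the global hypothesis on $\w$ (to know each $f_i^{\w}$ is a
monomial) and the hypothesis on the $\degw f_i$ (to get the
independence). Apart from that, the argument is a short and essentially
formal consequence of Corollary~\ref{cor:IP}.
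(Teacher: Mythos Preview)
Your argument has a genuine gap: you assume that each $f_i^{\w}$ is a monomial, but the corollary does \emph{not} assume that $w_1,\ldots,w_n$ are $\Z$-linearly independent. The hypothesis is only that $\degw f_1,\ldots,\degw f_r$ are linearly independent, and this says nothing about the shape of $f_i^{\w}$. For a concrete counterexample to your step, take $n=2$, $\w=(1,1)$, $r=1$, $f_1=x_1+x_2$: then $\degw f_1=1$ is $\Z$-linearly independent, yet $f_1^{\w}=x_1+x_2$ is not a monomial. Your entire chain (monomials $M_i$, multiplicative independence, distinct images of monomials) rests on this unwarranted claim, so the proof as written does not go through.

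The fix, and this is exactly what the paper does, is to apply the ``linearly independent weights force monomial initial forms'' remark on the \emph{other} side: set $G=F^{\w}$ and work with the weight $\w_G=(\degw f_1,\ldots,\degw f_r)$ on $k[x_1,\ldots,x_r]$. By the hypothesis, these $r$ weights are $\Z$-linearly independent, so for any nonzero $p\in k[x_1,\ldots,x_r]$ the initial form $p^{\w_G}$ is a single monomial. Since $G^{\w}(x_i)=(f_i^{\w})^{\w}=f_i^{\w}\neq 0$ and $k$ is a domain, substituting into a monomial gives $G^{\w}(p^{\w_G})\neq 0$; then Proposition~\ref{prop:IP} (ii) and (iii), applied to $G$ rather than to $F$, yield $G(p)^{\w}=G^{\w}(p^{\w_G})\neq 0$, hence $G(p)\neq 0$. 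Note that Corollary~\ref{cor:IP} is not used here; the paper appeals directly to Proposition~\ref{prop:IP}.
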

\begin{proof}
Put $G=F^{\w }$. 
Take any $p\in k[x_1,\ldots ,x_r]\sm \zs $. 
Then, 
$p^{\w _G}$ is a monomial by the remark, 
since $\w _G=(\degw f_1,\ldots ,\degw f_r)$. 
Since $G^{\w }(x_i)=(f_i^{\w })^{\w }\neq 0$ 
for each $i$, 
it follows that $G^{\w }(p^{\w _G})\neq 0$. 
Thus, 
we get $G(p)^{\w }=G^{\w }(p^{\w _G})\neq 0$ 
by Proposition~\ref{prop:IP} (iii). 
This implies that $G(p)\neq 0$. 
Therefore, 
$G$ is injective. 
\end{proof}

\section{Degrees of polynomial automorphisms}
\label{sect:mdeg}
\setcounter{equation}{0}

Throughout this section, 
let $n\in \N $ be arbitrary. 
We prove basic properties of 
the weighted degrees and multidegrees 
of elements of $\Aut _k\kx $.

\begin{lem}\label{lem:w-deg basic}
Let $F\in \Aut _k\kx $ and $\w \in \Gamma ^n$ 
be such that 
\begin{equation}\label{eq:ascend}
\degw f_1\leq \cdots \leq \degw f_n 
\quad \text{and}\quad 
w_1\leq \cdots \leq w_n. 
\end{equation}
Then, 
the following assertions hold$:$

\noindent{\rm (i)} 
If $\degw f_i<w_j$ for 
$i,j\in \{ 1,\ldots ,n\} $, 
then we have $i<j$.

\noindent{\rm (ii)} 
Assume that $w_1\geq 0$ 
and let $1\leq i<n$ be an integer. 
If $\degw f_i<w_{i+1}$, 
then $k[f_1,\ldots ,f_i]=k[x_1,\ldots ,x_i]$. 
If furthermore 
$\degw f_{i+1}<w_{i+2}$ or $i+1=n$, 
then $f_{i+1}=\alpha x_{i+1}+p$ 
for some $\alpha \in k^{\times }$ 
and $p\in k[x_1,\ldots ,x_i]$.

\noindent{\rm (iii)} 
Assume that $w_1>0$. 
Let $i,j\in \{ 1,\ldots ,n\} $ 
be such that $\degw f_i=w_j$. 
Set $j_0=\min \{ l\mid w_l=w_j\} $ 
and $j_1=\max \{ l\mid w_l=w_j\} $. 
Then, 
we have 
$$
f_i=g+a_{j_0}x_{j_0}+\cdots +a_{j_1}x_{j_1} 
$$
for some $g\in k[x_1,\ldots ,x_{j_0-1}]$ 
and $a_{j_0},\ldots ,a_{j_1}\in k$. 
\end{lem}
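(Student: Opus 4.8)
The plan is to prove the three assertions in order, exploiting the ascending hypothesis (\ref{eq:ascend}) and the fact that an automorphism has an inverse, so that each $x_l$ is a polynomial in $f_1,\ldots,f_n$. For part (i), I would argue by contradiction: suppose $\degw f_i < w_j$ but $i \geq j$. Since $\degw f_1 \leq \cdots \leq \degw f_n$, this gives $\degw f_1,\ldots,\degw f_j$ all strictly below $w_j \leq w_{j+1} \leq \cdots \leq w_n$. The idea is that $f_1,\ldots,f_j$ then live in the $k$-subalgebra spanned by monomials of $\w$-degree $< w_j$; I must check that such monomials only involve $x_1,\ldots,x_{j-1}$ (this uses $w_j \leq \cdots \leq w_n$, so any monomial containing some $x_l$ with $l \geq j$ has $\w$-degree $\geq w_l \geq w_j$ — here I may need a mild nonnegativity input, or I handle it directly from the ordering on monomials). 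Then $f_1,\ldots,f_j \in k[x_1,\ldots,x_{j-1}]$, which forces the Jacobian / the images $f_1,\ldots,f_j$ to be algebraically dependent (they lie in a ring of transcendence degree $j-1 < j$), contradicting that $F$ is an automorphism. So $i<j$.

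For part (ii), assume $w_1 \geq 0$ and $\degw f_i < w_{i+1}$. By the same monomial analysis as in (i) — any monomial of $\w$-degree $< w_{i+1}$ involves only $x_1,\ldots,x_i$, since including $x_l$ for $l \geq i+1$ contributes at least $w_l \geq w_{i+1}$ (using $w_1,\ldots$ ascending, and $w_1\geq 0$ to discard negative-weight cancellation, though with $w_1\geq 0$ all $w_l\geq 0$) — we get $f_1,\ldots,f_i \in k[x_1,\ldots,x_i]$. To upgrade this to $k[f_1,\ldots,f_i] = k[x_1,\ldots,x_i]$: the inclusion $\subset$ is just shown, and for the reverse I use that $F$ is an automorphism, so applying $F^{-1}$ componentwise and looking at the first $i$ coordinates, together with a dimension count (both rings have transcendence degree $i$ and the generators of the larger are integral-free polynomials) — more cleanly, $(f_1,\ldots,f_i,x_{i+1},\ldots,x_n)$ is still an automorphism, hence the restriction $k[x_1,\ldots,x_i] \to k[f_1,\ldots,f_i]$ induced by $F$ composed with projection is an isomorphism onto $k[x_1,\ldots,x_i]$. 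For the second statement of (ii): if also $\degw f_{i+1} < w_{i+2}$ or $i+1=n$, the same argument gives $f_1,\ldots,f_{i+1} \in k[x_1,\ldots,x_{i+1}]$ and $k[f_1,\ldots,f_{i+1}] = k[x_1,\ldots,x_{i+1}]$; since $f_1,\ldots,f_i$ already generate $k[x_1,\ldots,x_i]$, the element $f_{i+1}$ must have $x_{i+1}$-degree exactly $1$ with unit leading coefficient, i.e.\ $f_{i+1} = \alpha x_{i+1} + p$ with $\alpha\in k^\times$, $p\in k[x_1,\ldots,x_i]$.

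For part (iii), assume $w_1 > 0$ and $\degw f_i = w_j$, with $j_0 = \min\{l : w_l = w_j\}$ and $j_1 = \max\{l : w_l = w_j\}$. Write $f_i$ as a sum of monomials. Any monomial $x_1^{a_1}\cdots x_n^{a_n}$ appearing in $f_i$ has $\sum a_l w_l \leq w_j$. Since all $w_l > 0$, such a monomial has total degree bounded, and I claim: either it involves only $x_1,\ldots,x_{j_0-1}$ (contributing $g$), or it is a single variable $x_l$ with $w_l = w_j$, i.e.\ $l \in \{j_0,\ldots,j_1\}$ (contributing the linear terms $a_{j_0}x_{j_0}+\cdots+a_{j_1}x_{j_1}$). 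To see this: if the monomial involves some $x_l$ with $l \geq j_0$, then $\sum a_m w_m \geq w_l \geq w_{j_0} = w_j$, with equality forced, which (since $w_1>0$) requires the monomial to be exactly $x_l$ and $w_l = w_j$, hence $j_0 \leq l \leq j_1$; and it cannot simultaneously involve a second variable. Collecting terms gives the desired form. The main obstacle I anticipate is the careful monomial bookkeeping establishing ``a monomial of $\w$-degree $< w_{i+1}$ (resp.\ $\leq w_j$) involves only low-index variables'' under the given sign hypotheses ($w_1 \geq 0$ in (ii), $w_1 > 0$ in (iii)), and making the transcendence-degree argument in (i)--(ii) rigorous without over-invoking characteristic-zero Jacobian facts — I expect that the algebraic-independence of $f_1,\ldots,f_n$ over $k$ (true for any automorphism over any domain) plus a transcendence-degree count is exactly what is needed and no more.
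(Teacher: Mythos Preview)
Your arguments for parts (ii) and (iii) are essentially the paper's, and they are fine. (For the step $k[f_1,\ldots,f_i]=k[x_1,\ldots,x_i]$ in (ii), the paper simply asserts it; your sketch via ``$(f_1,\ldots,f_i,x_{i+1},\ldots,x_n)$ is still an automorphism'' can be made precise: from $f_1,\ldots,f_i\in k[x_1,\ldots,x_i]$ one gets $k[x_1,\ldots,x_i][f_{i+1},\ldots,f_n]=k[x_1,\ldots,x_n]$, so $H:=(x_1,\ldots,x_i,f_{i+1},\ldots,f_n)$ is an automorphism, and then $H^{-1}\circ F=(f_1,\ldots,f_i,x_{i+1},\ldots,x_n)$ restricts to an automorphism of $k[x_1,\ldots,x_i]$.)

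Part (i), however, has a genuine gap exactly where you hedge. The statement of (i) carries \emph{no} sign hypothesis on $\w$, and without one the implication ``$\degw f<w_j\ \Rightarrow\ f\in k[x_1,\ldots,x_{j-1}]$'' is simply false: for $n=2$, $\w=(-1,1)$, $j=2$, the monomial $x_1^2x_2$ has $\w$-degree $-1<1=w_2$ but lies outside $k[x_1]$. So you cannot force $f_1,\ldots,f_j$ into $k[x_1,\ldots,x_{j-1}]$ from $\degw f_l<w_j$ alone, and your transcendence-degree contradiction does not go through. The paper's fix is to pass to the \emph{linear parts} $f_l'$: since each $f_l'$ is a $k$-linear combination of the single variables $x_m$, the inequality $\degw f_l'\le\degw f_l<w_j$ really does force $f_l'\in kx_1+\cdots+kx_{j-1}$ (any $x_m$ with $m\ge j$ has $\degw x_m=w_m\ge w_j$, and there is no possibility of negative-weight cancellation for a single variable). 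The Jacobian of $(f_1',\ldots,f_n')$ equals that of $F$, hence lies in $k^{\times}$, so $f_1',\ldots,f_n'$ are $k$-linearly independent; having $i$ of them inside the free $k$-module $kx_1+\cdots+kx_{j-1}$ of rank $j-1$ forces $i\le j-1$. This reduction to linear parts is the missing idea in your approach to (i).
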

\begin{proof}
(i) 
Let $f_l'$ be the linear part of $f_l$ 
for each $l$. 
Then, 
the Jacobian of $(f_1',\ldots ,f_n')$ 
is equal to that of $F$, 
and hence is an element of $k^{\times }$. 
Thus, 
$f_1',\ldots ,f_n'$ 
are linearly independent over $k$. 
Note that $\degw f_l'\leq \degw f_l$ 
for each $l$. 
Since $\degw f_i<w_j$ by assumption, 
it follows that 
$\degw f_{i'}'<w_{j'}$ 
for each $i'\leq i$ and $j'\geq j$ 
by (\ref{eq:ascend}). 
Thus, 
$f_1',\ldots ,f_i'$ belong to 
the $k$-module $kx_1+\cdots +kx_{j-1}$. 
Since $f_1',\ldots ,f_i'$ 
are linearly independent over $k$, 
we conclude that $i\leq j-1<j$.

(ii) 
Since $\degw f_i<w_{i+1}$ by assumption, 
we have $\degw f_{i'}<w_{j'}$ 
for each $i'\leq i$ and $j'\geq i+1$ 
by (\ref{eq:ascend}). 
Since $w_l$'s are nonnegative, 
it follows that 
$f_1,\ldots ,f_i$ belong to 
$k[\x _0]:=k[x_1,\ldots ,x_i]$. 
This implies that 
$k[f_1,\ldots ,f_i]=k[\x _0]$. 
Next, 
assume that 
$\degw f_{i+1}<w_{i+2}$ or $i+1=n$. 
Then, 
we have $k[\x _0][x_{i+1}]=k[f_1,\ldots ,f_{i+1}]$ 
similarly. 
Since $k[f_1,\ldots ,f_i]=k[\x _0]$, 
it follows that 
$k[\x _0][x_{i+1}]=k[\x _0][f_{i+1}]$. 
Therefore, 
$f_{i+1}$ has the required form.

(iii) 
By the maximality of $j_1$, 
we have 
$\degw f_i=w_j<w_{j_1+1}$ or $j_1=n$. 
Since $w_l$'s are positive, 
$f_i$ belongs to $k[x_1,\ldots ,x_{j_1}]$ 
in either case. 
Write $f_i=g+h$, 
where $g\in k[x_1,\ldots ,x_{j_0-1}]$ 
and 
$h\in \sum _{l=j_0}^{j_1}x_lk[x_1,\ldots ,x_{j_1}]$. 
It remains only to 
show that $\deg h=1$. 
Let $x_{j'}m$ be any monomial appearing in $h$, 
where $j_0\leq j'\leq j_1$ 
and $m\in k[x_1,\ldots ,x_{j_1}]$. 
Then, 
we have 
$\degw x_{j'}m\leq \degw h\leq \degw f_i=w_j$. 
Since 
$$
\degw x_{j'}m=\degw x_{j'}+\degw m
\geq \degw x_{j'}=w_{j'}=w_j, 
$$
it follows that $\degw m=0$. 
This implies that 
$m$ belongs to $k\sm \zs $ 
by the positivity of $w_l$'s. 
Therefore, 
we conclude that $\deg h=1$. 
\end{proof}

We say that $F\in \Aut _k\kx $ is {\it triangular} 
if $f_i$ belongs to $k[x_1,\ldots ,x_i]$ 
for $i=1,\ldots ,n$. 
The following proposition 
can be proved similarly to 
Lemma~\ref{lem:w-deg basic} (ii).

\begin{prop}\label{prop:triangular}
Assume that $F\in \Aut _k\kx $ and 
$\w \in (\Gamma _{\geq 0})^n$ 
satisfy {\rm (\ref{eq:ascend})}. 
If $\degw f_i<w_{i+1}$ 
for $i=1,\ldots ,n-1$, 
then $F$ is triangular. 
\end{prop}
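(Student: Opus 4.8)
The plan is to mimic the proof of Lemma~\ref{lem:w-deg basic}~(ii), inducting on $i$ to show simultaneously that $k[f_1,\ldots,f_i]=k[x_1,\ldots,x_i]$ and that $f_i\in k[x_1,\ldots,x_i]$ for each $i=1,\ldots,n$. The point is that the hypothesis $\degw f_i<w_{i+1}$, combined with the ascending arrangement \eqref{eq:ascend} and the nonnegativity of the $w_l$'s, forces each $f_i$ (with $i\le n-1$) to involve only the variables $x_1,\ldots,x_i$: indeed, any monomial $x_1^{a_1}\cdots x_n^{a_n}$ appearing in $f_i$ has $\w$-degree $\sum_l a_l w_l \le \degw f_i < w_{i+1}\le w_{i+2}\le\cdots\le w_n$, so since all $w_l\ge 0$ we must have $a_{i+1}=\cdots=a_n=0$. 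Hence $f_i\in k[x_1,\ldots,x_i]$ for $i\le n-1$, and $f_n\in k[x_1,\ldots,x_n]=\kx$ trivially; this is precisely triangularity.

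First I would observe that the statement is invariant under simultaneously permuting the $x_i$, the $f_i$ and the $w_i$ by the same permutation, so after relabelling we may assume \eqref{eq:ascend} holds outright (this is the standard reduction already used in Lemma~\ref{lem:w-deg basic}). Then I would carry out the monomial-support argument in the previous paragraph to get $f_i\in k[x_1,\ldots,x_i]$ for each $i<n$; since $f_n$ automatically lies in $\kx=k[x_1,\ldots,x_n]$, the definition of triangular is met. Strictly speaking this already proves the proposition, but if one wants the cleaner bookkeeping of Lemma~\ref{lem:w-deg basic}~(ii) one can instead argue inductively: supposing $k[f_1,\ldots,f_{i-1}]=k[x_1,\ldots,x_{i-1}]$, the inclusion $f_i\in k[x_1,\ldots,x_i]$ together with $F\in\Aut_k\kx$ forces $k[f_1,\ldots,f_i]=k[x_1,\ldots,x_i]$ by a standard Jacobian/degree argument — exactly as in the cited lemma.

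I do not expect any real obstacle here; the proposition is essentially a direct consequence of the support computation, and the reference to Lemma~\ref{lem:w-deg basic}~(ii) is there only to supply the (routine) inductive step. The one point requiring a little care is the relabelling: one must check that permuting indices preserves both the hypothesis $\degw f_i<w_{i+1}$ for all $i$ and the conclusion of triangularity — but this is immediate once one sorts both $(\degw f_i)_i$ and $(w_i)_i$ into ascending order compatibly, which is possible because the hypothesis $\degw f_i<w_{i+1}$ (combined with $\degw f_i\ge \degw f_i'$ for the linear parts and linear independence of the $f_i'$, as in part~(i)) already pins down the correlation between the two orderings. So the write-up is short: reduce to \eqref{eq:ascend}, run the support argument, invoke Lemma~\ref{lem:w-deg basic}~(ii) for the tidy phrasing, done.
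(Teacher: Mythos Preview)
Your core argument---the monomial-support computation showing that $\degw f_i<w_{i+1}$ together with $w_1\le\cdots\le w_n$ and $w_l\ge 0$ forces $f_i\in k[x_1,\ldots,x_i]$ for each $i\le n-1$---is correct and is exactly what the paper intends: the paper gives no separate proof but says the proposition ``can be proved similarly to Lemma~\ref{lem:w-deg basic}~(ii)'', whose proof is precisely this support argument.

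However, your surrounding discussion of relabelling should be deleted. Condition~\eqref{eq:ascend} is an explicit \emph{hypothesis} of the proposition, so there is no reduction to perform. Worse, the invariance claim you make is false: neither the hypotheses $\degw f_i<w_{i+1}$ (for $i=1,\ldots,n-1$) nor the conclusion ``$F$ is triangular'' is preserved under an arbitrary simultaneous permutation of the indices, since both refer to the specific ordering $1,\ldots,n$. The ``one point requiring a little care'' you identify is thus a non-issue; the two-line support argument is already the complete proof.
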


In the study of polynomial automorphisms, 
the notion of the $\w $-degree 
of a differential form is important. 
Let $\Omega _{\kx /k}$ be the module of differentials of 
$\kx $ over $k$, 
and $\omega $ an element of 
the $r$-th exterior power $\bigwedge ^r\Omega _{\kx /k}$ 
of the $\kx $-module $\Omega _{\kx /k}$ 
for $r\in \N $. 
Then, 
we can uniquely write 
$$
\omega =\sum _{1\leq i_1<\cdots <i_r\leq n}
f_{i_1,\ldots ,i_r}dx_{i_1}\wedge \cdots \wedge dx_{i_r},
$$
where $f_{i_1,\ldots ,i_r}\in \kx $ for each $i_1,\ldots ,i_r$. 
Here, 
$df$ denotes the differential of $f$ for each $f\in \kx $. 
We define 
the $\w $-{\it degree} of $\omega $ by 
\begin{equation*}
\deg _{\w }\omega =\max \{ \deg_{\w }
(f_{i_1,\ldots ,i_r}x_{i_1}\cdots x_{i_r})\mid 
1\leq i_1<\cdots <i_r\leq n\} . 
\end{equation*}
Let $f_1,\ldots ,f_r$ be elements of $\kx \sm \zs $. 
Then, 
$df_1\wedge \cdots \wedge df_r\neq 0$ 
implies that $f_1,\ldots ,f_r$ 
are algebraically independent over $k$ 
(cf.~\cite[Section 26]{Matsumura}). 
By definition, 
we have 
\begin{equation}\label{eq:bibun}
\begin{aligned}
&\degw df_1\wedge \cdots \wedge df_r \\
&\quad =\max \left\{ \degw 
\left(
\left| 
\frac{\partial (f_1,\ldots ,f_r)}{
\partial (x_{i_1},\ldots ,x_{i_r})}
\right|x_{i_1}\cdots x_{i_r}
\right) 
\Bigm| 
1\leq i_1<\cdots <i_r\leq n
\right\} \\ 
& \quad \leq \sum _{i=1}^r\degw f_i, 
\end{aligned}
\end{equation}
in which the equality holds 
if and only if 
$df_1^{\w }\wedge \cdots \wedge df_r^{\w }\neq 0$.

Now, 
let $\mathfrak{S}_n$ be the symmetric group of 
$\{ 1,\ldots ,n\} $. 
Then, 
$$
\w _{\sigma }:=(w_{\sigma (1)},\ldots ,w_{\sigma (n)})
$$
belongs to $|\E _n^{\w }|$ 
for each $\sigma \in \mathfrak{S}_n$. 
Hence, 
$\mdegw F$ belongs to $|\E _n^{\w }|$ 
for each $F\in \Aut _k\kx $ with $\degw F=|\w |$ 
by (ii) of the following proposition.

\begin{thm}\label{thm:base autom}
For each 
$F\in \Aut _k\kx $ 
and $\w \in \Gamma ^n$, 
the following assertions hold$:$

\noindent{\rm (i)} 
There exists $\sigma \in \mathfrak{S}_n$ 
such that $\degw f_i\geq w_{\sigma (i)}$ 
for $i=1,\ldots ,n$. 
Hence, we have $\degw F\geq |\w |$.

\noindent{\rm (ii)} 
The following conditions are equivalent$:$ 

{\rm (a)} 
$\mdegw F=\w _{\sigma }$ 
for some $\sigma \in \mathfrak{S}_n$$;$ 

{\rm (b)} $\degw F=|\w |$$;$ 

{\rm (c)} $F^{\w }$ is injective, 
i.e., 
$f_1^{\w },\ldots ,f_n^{\w }$ 
are algebraically independent over $k$$;$ 

{\rm (d)} 
$F^{\w }$ belongs to $\Aut _k\kx $.

\noindent{\rm (iii)} 
If $\mdegw F=\w $, 
then we have $\mdegw F^{-1}=\w $. 

\end{thm}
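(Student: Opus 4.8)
The plan is to prove Theorem~\ref{thm:base autom} in three stages, handling parts (i), (ii), and (iii) in order, and leveraging the initial-form machinery of Section~\ref{sect:IP} throughout.

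\textbf{Part (i).} First I would pick $\sigma_0 \in \mathfrak{S}_n$ so that relabelling the variables by $x_i \mapsto x_{\sigma_0(i)}$ puts both the $f_i$ and the $w_i$ into ascending order; more precisely, I would arrange matters so that $w_1 \le \cdots \le w_n$ and $\degw f_1 \le \cdots \le \degw f_n$ after a suitable permutation, reducing to the case where (\ref{eq:ascend}) holds. Then the claim is that $\degw f_i \ge w_i$ for all $i$. Suppose not, so $\degw f_i < w_i \le w_j$ for some $i$ and all $j \ge i$. By Lemma~\ref{lem:w-deg basic}(i), applied to the pair $(i,j)$ with $j = i$ —or rather by the argument in its proof: the linear parts $f_1',\dots,f_i'$ are linearly independent over $k$ and lie in $kx_1 + \cdots + kx_{i-1}$, a module of rank $i-1$— we get a contradiction. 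Actually the cleanest route is to invoke Lemma~\ref{lem:w-deg basic}(i) directly: if $\degw f_i < w_i$, then with $j := i$ that lemma forces $i < i$, absurd. Unwinding the permutations gives the desired $\sigma$, and summing $\degw f_i \ge w_{\sigma(i)}$ over $i$ yields $\degw F \ge |\w|$.

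\textbf{Part (ii).} I would prove the cycle of implications (b)$\Rightarrow$(c)$\Rightarrow$(d)$\Rightarrow$(a)$\Rightarrow$(b), or a convenient variant. For (b)$\Rightarrow$(c): if $\degw F = |\w|$, then $\sum_i \degw f_i = \sum_i w_i$, and combined with part (i) this forces $\degw f_i = w_{\sigma(i)}$ for the $\sigma$ from (i), so in particular $\mdegw F$ is a permutation of $\w$ — this already gives (a). For (c): since $F \in \Aut_k\kx$, the Jacobian of $F$ is a unit, so $df_1 \wedge \cdots \wedge df_n \neq 0$ with $\w$-degree exactly $\degw F = |\w|$; by the equality clause in (\ref{eq:bibun}) this happens iff $df_1^{\w} \wedge \cdots \wedge df_n^{\w} \neq 0$, which forces $f_1^{\w},\dots,f_n^{\w}$ to be algebraically independent over $k$, i.e. $F^{\w}$ injective. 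For (c)$\Rightarrow$(d): with $F^{\w}$ injective, Corollary~\ref{cor:IP}(i) gives $\degw F(g) = \deg_{\w_F}g$ and $F(g)^{\w} = F^{\w}(g^{\w_F})$ for all $g$; applying this to $g = F^{-1}(x_i)$ shows $x_i = F(g)$ has $\w$-initial form $F^{\w}((F^{-1}(x_i))^{\w_F})$, so $x_i \in \operatorname{Image}(F^{\w})$ for each $i$, whence $F^{\w}$ is surjective as a $k$-algebra endomorphism, hence an automorphism. For (d)$\Rightarrow$(b): if $F^{\w} \in \Aut_k\kx$, then its Jacobian is a unit, so the matrix $(\partial f_i^{\w}/\partial x_j)$ is invertible; by the determinant argument one checks $\sum_i \degw f_i^{\w} = |\w|$... but $\degw f_i^{\w} = \degw f_i$, so $\degw F = |\w|$. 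Finally (a)$\Rightarrow$(b) is immediate by summing.

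\textbf{Part (iii) — the main obstacle.} This is where I expect the real work. Assume $\mdegw F = \w$, i.e. $\degw f_i = w_i$ for all $i$. By part (ii), $F^{\w} \in \Aut_k\kx$, and Corollary~\ref{cor:IP} applies. Set $G = F^{-1}$. I want $\degw g_i = w_i$. From Corollary~\ref{cor:IP}(i) with the substitution $G$ in place of $F$ — but wait, I don't yet know $G^{\w}$ is injective. The trick is: for each $i$, $x_i = F(g_i)$, and since $F^{\w}$ is injective, Corollary~\ref{cor:IP}(i) gives $\degw x_i = \deg_{\w_F} g_i = \deg_{\mathbf{w}} g_i$ because $\w_F = (\degw f_1,\dots,\degw f_n) = \w$; hence $\deg_{\mathbf{w}} g_i = w_i$... no: $\deg_{\w_F} g_i$ means the $\w_F$-degree of $g_i$ as a polynomial in $n$ variables, which is $\ge \deg_{\mathbf{w}}(\text{leading monomial of }g_i\text{ under }\mathbf{w})$, not literally $\degw g_i$. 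So I would instead argue: apply part (i) to $G$ to get $\degw g_i \ge w_{\tau(i)}$ for some $\tau$, hence $\degw G \ge |\w|$; on the other hand, $x_i = F(g_i)$ together with Proposition~\ref{prop:IP}(i) gives $w_i = \degw x_i = \degw F(g_i) \le \deg_{\w_F} g_i = \deg_{\mathbf{w}} g_i \le \degw G$ summed appropriately — more carefully, $\sum_i w_i = \sum_i \degw x_i \le \sum_i \deg_{\mathbf{w}} g_i$, and I'd need $\sum_i \deg_{\mathbf{w}} g_i \le \sum_i \degw g_i = \degw G$; but $\deg_{\mathbf{w}} g_i \le \degw g_i$ is false in the wrong direction. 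The honest route: since $\mdegw F = \w$ and $w = \w_F$, the substitution operator $F$ satisfies $\deg_{\w_F} = \degw$ on monomials in $x_1,\dots,x_n$, so for any $g \in \kx$, $\deg_{\w_F} g = \degw g$; thus Corollary~\ref{cor:IP}(i) reads $\degw F(g) = \degw g$ and $F(g)^{\w} = F^{\w}(g^{\w})$ for all $g$, i.e. $F$ \emph{preserves} the $\w$-degree. Applying this with $g = g_i$ gives $w_i = \degw x_i = \degw F(g_i) = \degw g_i$, which is exactly $\mdegw G = \w$. The subtle point I must nail down is precisely the identity $\deg_{\w_F} g = \degw g$ for all $g$ when $\w_F = \w$, which follows because the two gradings on $\kx$ induced by $\w_F$ and $\w$ literally coincide monomial-by-monomial; once that is stated cleanly the rest is a one-line application of Corollary~\ref{cor:IP}(i).
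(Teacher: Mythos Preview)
Your proposal is correct and follows essentially the same approach as the paper: Lemma~\ref{lem:w-deg basic}(i) for part (i), the Jacobian/differential form identity (\ref{eq:bibun}) to link (b) with $df_1^{\w}\wedge\cdots\wedge df_n^{\w}\neq 0$ for part (ii), and Corollary~\ref{cor:IP}(i) together with $\w_F=\w$ for part (iii). The only minor deviation is in (c)$\Rightarrow$(d), where you argue surjectivity of $F^{\w}$ via Corollary~\ref{cor:IP}(i) applied to $g=F^{-1}(x_i)$, whereas the paper uses Corollary~\ref{cor:IP}(iii) to get $k[f_1^{\w},\ldots,f_n^{\w}]=\kx^{\w}=\kx$ directly; both routes work, and your final argument for (iii)---after the false starts---is exactly the paper's.
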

\begin{proof}
(i) 
Let $\tau ,\rho \in \mathfrak{S}_n$ 
be such that 
$$
\degw f_{\tau (1)}\leq \cdots \leq \degw f_{\tau (n)}
\quad \text{and}\quad 
w_{\rho (1)}\leq \cdots \leq w_{\rho (n)}. 
$$
Then, 
we have $\degw f_{\tau (i)}\geq w_{\rho (i)}$ 
for each $i$ 
by Lemma~\ref{lem:w-deg basic} (i). 
Put $\sigma =\rho \circ \tau ^{-1}$. 
Then, 
$\degw f_i\geq w_{\sigma (i)}$ 
holds for each $i$. 
The last statement is clear.

(ii) 
Clearly, 
(a) implies (b). 
By (i), 
we see that (b) implies (a). 
So we show that (b), (c) and (d) are equivalent. 
Let $JF$ be the Jacobi matrix of $F$. 
Then, 
$\det JF$ belongs to $k^{\times }$. 
Hence, 
we know by (\ref{eq:bibun}) that 
$$
\degw F\geq 
\degw df_1\wedge \cdots \wedge df_n
=\degw {(\det JF)dx_1\wedge \cdots \wedge dx_n}
=|\w |, 
$$
in which the equality holds if and only if 
$\eta:=df_1^{\w }\wedge \cdots \wedge df_n^{\w }\neq 0$. 
Thus, 
(b) is equivalent to $\eta\neq 0$. 
Since $\eta \neq 0$ implies that $f_1^{\w },\ldots ,f_n^{\w }$ 
are algebraically independent over $k$, 
we see that (b) implies (c). 
By Corollary~\ref{cor:IP} (iii), 
(c) implies 
$$
k[f_1^{\w },\ldots ,f_n^{\w }]
=k[f_1,\ldots ,f_n]^{\w }
=\kx ^{\w }=\kx , 
$$
and hence implies (d). 
Since $\eta =(\det JF^{\w })dx_1\wedge \cdots \wedge dx_n$, 
(d) implies $\eta \neq 0$, 
and hence implies (b). 
Therefore, 
(b), (c) and (d) are equivalent.

(iii) 
Set $g_i=F^{-1}(x_i)$ for $i=1,\ldots ,n$. 
Then, 
we have $F(g_i)=x_i$, 
and hence $\degw F(g_i)=w_i$. 
Since $\mdegw F=\w $ by assumption, 
$F^{\w }$ is injective by (ii). 
Thus, 
we get 
$\deg _{\w _F}g_i=\degw F(g_i)=w_i$ 
by Corollary~\ref{cor:IP} (i). 
Since $\w _F=\mdegw F=\w $, 
it follows that 
$\degw g_i=\deg _{\w _F}g_i=w_i$, 
proving $\mdegw F^{-1}=\w $. 
\end{proof}

\begin{thm}\label{thm:base tame}
Let $F$ be an element of $\Aut _k\kx $. 
If $\degw F=|\w |$ holds for some $\w \in (\Gamma _+)^n$, 
then $F$ belongs to $\T _n(k)$. 
\end{thm}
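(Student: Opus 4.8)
The plan is to show that, after composing with affine automorphisms (which are tame), $F$ becomes a triangular automorphism, and then to invoke the fact that triangular automorphisms lie in $\T _n(k)$. Throughout I use Theorem~\ref{thm:base autom}, in particular that $\det JF\in k^{\times }$ and that the hypothesis $\degw F=|\w|$ forces equality in the bound of part~(i). First I would reduce to a sorted situation. Conjugating $F$ by the affine permutation $P:x_i\mapsto x_{\rho (i)}$ and replacing $\w $ by $\w _{\rho }$ preserves both tameness and the identity $\degw F=|\w|$, since a short computation gives $\degw P(p)=\deg _{\w _{\rho }}p$ for all $p$; hence I may assume $w_1\le \cdots \le w_n$. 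Composing on the right with a further affine permutation of the variables, I may also assume $\degw f_1\le \cdots \le \degw f_n$, so that {\rm (\ref{eq:ascend})} holds. Then Theorem~\ref{thm:base autom}(i) gives $\degw f_i\ge w_i$ for every $i$, and summing with $\degw F=|\w|$ forces $\degw f_i=w_i$ for all $i$. Since $w_1>0$, Lemma~\ref{lem:w-deg basic}(iii) applies to each $f_i$: writing $v_1<\cdots <v_s$ for the distinct values among the $w_i$ and $B_t=\{ i\mid w_i=v_t\} $ for the corresponding (consecutive) blocks, it yields
\[
f_i=g_i+\sum _{l\in B_t}a_{i,l}x_l\qquad (i\in B_t),
\]
with $a_{i,l}\in k$ and $g_i\in k[\{ x_l\mid w_l<v_t\} ]$.

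The heart of the argument is to prove that each block matrix $A_t:=(a_{i,l})_{i,l\in B_t}$ lies in $\GL (k)$. Because $g_i$ involves only variables of weight $<v_t$, the polynomial $f_i$ involves no variable of weight $>v_t$; hence $\partial f_i/\partial x_l=0$ when $l$ lies in a higher block, while $\partial f_i/\partial x_l=a_{i,l}$ when $l$ lies in the same block as $i$. Ordering the variables by weight, $JF$ is therefore block lower-triangular with diagonal blocks $A_t$, so $\det JF=\prod _t\det A_t$. Since $\det JF\in k^{\times }$ and $k$ is a domain, each factor $\det A_t$ is a unit, i.e.\ $A_t\in \GL (k)$.

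Given the invertibility of the $A_t$, I would straighten $F$. Let $M$ be the linear automorphism whose matrix is block-diagonal with diagonal blocks $A_1^{-1},\ldots ,A_s^{-1}$, and set $F':=F\circ M$, so that $f_i'=\sum _{j\in B_t}(A_t^{-1})_{ij}f_j$ for $i\in B_t$. Substituting the displayed form of the $f_j$ collapses the block-linear part to the identity and leaves
\[
f_i'=x_i+g_i'(x_1,\ldots ,x_{i-1}),\qquad g_i'\in k[\{ x_l\mid w_l<v_t\} ]\quad (i\in B_t),
\]
so $F'$ is triangular of unipotent shape. Such an $F'$ is the composition $\mathcal E_n\circ \cdots \circ \mathcal E_1$ of the elementary automorphisms $\mathcal E_i:x_i\mapsto x_i+g_i'$ (each $g_i'$ involves no $x_i$), hence lies in $\T _n(k)$. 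As $M^{-1}$ is affine, $F=F'\circ M^{-1}$ is then tame, proving the theorem.

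The step I expect to be the main obstacle is the block-invertibility claim, and its two delicate inputs: one must use Lemma~\ref{lem:w-deg basic}(iii) precisely, to be sure that $g_i$ involves only variables of weight strictly below $v_t$ (so that $JF$ is genuinely block lower-triangular with \emph{constant} diagonal blocks $A_t$), and one must use that $k$ is a domain to pass from $\det JF\in k^{\times }$ to $\det A_t\in k^{\times }$ for each individual $t$. By contrast, the sorting reductions and the identification of a unipotent triangular map with a product of elementary automorphisms are routine.
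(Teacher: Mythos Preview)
Your argument is correct, and it takes a genuinely different route from the paper's. Both proofs begin with the same reductions (sorting $\w$ and $\mdegw F$, deducing $\degw f_i=w_i$, and invoking Lemma~\ref{lem:w-deg basic}(iii) for the block form), but diverge at the key step. The paper argues by induction on the number $r$ of distinct weights: in the inductive step it uses Lemma~\ref{lem:w-deg basic}(ii) to see that $F_0=(f_1,\ldots ,f_{l-1})$ is an automorphism of the smaller polynomial ring, applies the inductive hypothesis to $F_0$, and then checks that a suitable composite $F\circ H$ is affine---the invertibility of the top block matrix being absorbed into the fact that $F\circ H$ is an automorphism with linear components. You instead treat all blocks simultaneously: the explicit Jacobian computation shows $JF$ is block lower-triangular with constant diagonal blocks $A_t$, whence $\det JF=\prod _t\det A_t\in k^{\times }$ forces each $A_t\in \GL(k)$, and a single block-diagonal affine correction renders $F$ unipotent triangular. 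Your approach avoids induction and Lemma~\ref{lem:w-deg basic}(ii) entirely, at the cost of the Jacobian bookkeeping; the paper's approach is slightly more conceptual but needs the inductive machinery. One minor remark: the domain hypothesis on $k$ is really used to conclude $\det JF\in k^{\times }$ (i.e., that $k[\x]^{\times }=k^{\times }$), rather than for the factorization step you flag, which holds in any commutative ring.
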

\begin{proof}
Without loss of generality, 
we may assume that $F$ and $\w $ 
satisfy (\ref{eq:ascend}). 
Since $\degw F=|\w |$ by assumption, 
we have $\mdegw F=\w _{\sigma }$ 
for some $\sigma \in \mathfrak{S}_n$ 
by Theorem~\ref{thm:base autom} (ii). 
Because of (\ref{eq:ascend}), 
this implies that $\degw f_i=w_i$ 
for $i=1,\ldots ,n$. 
We prove the assertion by induction on 
$r:=\# \{ w_1,\ldots ,w_n\} $. 
When $r=1$, we have $\w =(w,\ldots ,w)$ 
for some $w\in \Gamma _+$. 
Since 
$w\deg f_i=\degw f_i=w_i=w$, 
we know that $\deg f_i=1$ for each $i$. 
Thus, 
$F$ is an affine automorphism. 
Therefore, $F$ belongs to $\T _n(k)$. 
Assume that $r\geq 2$. 
Then, 
there exists $1<l\leq n$ such that 
$w_{l-1}<w_l=\cdots =w_n$. 
Since $\degw f_{l-1}=w_{l-1}<w_l$, 
we know by Lemma~\ref{lem:w-deg basic} (ii) 
that $F_0:=(f_1,\ldots ,f_{l-1})$ 
is an automorphism of 
$k[x_1,\ldots ,x_{l-1}]$. 
Set $\vv =(w_1,\ldots ,w_{l-1})$. 
Then, 
we have 
$\degv f_i=\degw f_i=w_i$ 
for $i=1,\ldots ,l-1$, 
and so $\degv F_0=|\vv |$. 
Hence, 
$F_0$ belongs to $\T _{l-1}(k)$ 
by induction assumption. 
For $i=l,\ldots ,n$, 
we have 
$$
w_{l-1}<\degw f_i=w_l=\cdots =w_n. 
$$
Hence, 
we may write 
$f_i=\sum _{j=l}^na_{i,j}x_j+g_i$ 
by Lemma~\ref{lem:w-deg basic} (iii), 
where $a_{i,j}\in k$ for each $j$, 
and $g_i\in k[x_1,\ldots ,x_{l-1}]$. 
Define $H\in \T _n(k)$ by 
$$
H=(F_0^{-1},x_l-F_0^{-1}(g_l),\ldots ,x_n-F_0^{-1}(g_n)). 
$$
Then, 
$F\circ H=(x_1,\ldots ,x_{l-1},f_l-g_l,\ldots ,f_n-g_n)$ 
is an affine automorphism. 
Therefore, 
$F$ belongs to $\T _n(k)$. 
\end{proof}

Clearly, 
$F$ does not necessary belong to $\T _n(k)$ 
even if $\degw F=|\w |$ 
for some $\w \in \Gamma ^n\sm (\Gamma _+)^n$, 
since $\degw F=|\w |$ holds for any $F$ 
for $\w =(0,\ldots ,0)$.

\section{Proof of Theorem~\ref{thm:weighted degree}}
\label{sect:pf 1.1}
\setcounter{equation}{0}

In this and the next section, 
we prove Theorem~\ref{thm:weighted degree}. 
The following theorem is due to the author.

\begin{thm}[{\cite[Theorem 1.4]{stbinv}}]\label{thm:coords}
Let $m\geq n$ and 
$f_1,\ldots ,f_m\in k[x_1,\ldots ,x_m]$ 
be such that $k[f_1,\ldots ,f_m]=k[x_1,\ldots ,x_m]$ 
and $\kx \not\subset k[f_2,\ldots ,f_m]$, 
and $S\subset \kx \sm \zs $ such that $\trd _kk[S]=n$. 
Then, 
for each $\w \in \Gamma ^n$, 
there exists $g\in S$ 
such that $g$ does not divide $f^{\w }$ 
for any $f\in k[f_2,\ldots ,f_m]\cap \kx \sm \zs $. 
\end{thm}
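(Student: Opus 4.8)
The plan is to show that those $g\in S$ which \emph{do} divide some $f^{\w }$ $(f\in k[f_2,\ldots ,f_m]\cap \kx \sm \zs )$ generate a $k$-algebra of transcendence degree $<n$; since $\trd _kk[S]=n$, some $g\in S$ must then avoid all of them. Write $A=k[f_2,\ldots ,f_m]$, $R=k[x_1,\ldots ,x_m]$ and $B=A\cap \kx $. As $k[f_1,\ldots ,f_m]=R$, the $f_i$ are algebraically independent, so $R=A[f_1]$ is a polynomial ring in $f_1$ over $A$. Grading $R$ by the $f_1$-degree shows that $A$, its degree-zero part, is factorially closed in $R$ (the $f_1$-degree is additive and nonnegative), and hence that $B$ is factorially closed in $\kx $.

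Next I would reduce to a statement about irreducibles. Since $\kx $ is a domain, $(ff')^{\w }=f^{\w }f'^{\w }$ by~(\ref{eq:prod deg}), so the set $\widehat{\mathcal P}$ of $h\in \kx \sm \zs $ dividing some $f^{\w }$ $(f\in B\sm \zs )$ is closed under multiplication and under taking divisors. Replacing $k$ by its field of fractions enlarges the available $f$ and only weakens divisibility, so it suffices to argue when $k$ is a field and $\kx $ is a UFD. Writing $\mathcal P$ for the irreducibles dividing some $f^{\w }$ $(f\in B\sm \zs )$, we then have $h\in \widehat{\mathcal P}$ iff every irreducible factor of $h$ lies in $\mathcal P$; thus the theorem amounts to $S\not\subset \widehat{\mathcal P}$, for which it is enough to prove $\trd _kk[\mathcal P]<n$.

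The first key point is $\trd _kB<n$. If instead $\trd _kB=n$, then the fraction field of $\kx $ is algebraic over that of $B$, hence over that of $A$; but the fraction field of $A$ is relatively algebraically closed in the (purely transcendental) fraction field of $R=A[f_1]$, so each $x_i$ with $i\le n$, being algebraic over the fraction field of $A$ and an element of $R$, lies in $A$. This gives $\kx \subset A$ and contradicts the hypothesis. Since the $\w $-filtration and its associated graded preserve transcendence degree, the initial algebra satisfies $\trd _kB^{\w }=\trd _kB<n$.

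The remaining and hardest step is $\trd _kk[\mathcal P]\le \trd _kB$. Factorial closedness lets me factor any $f\in B$ in $\kx $ as $f=\prod _ip_i$ with $p_i\in B$, whence $f^{\w }=\prod _ip_i^{\w }$ by~(\ref{eq:prod deg}); so $\mathcal P$ consists exactly of the irreducible factors of the forms $p^{\w }$ with $p\in B$ irreducible. The obstacle is that initial algebras are \emph{not} factorially closed (an irreducible factor $q$ of $p^{\w }$ need not lie in $B^{\w }$), so one cannot simply place $\mathcal P$ inside $B^{\w }$. I would control each such $q$ through the valuation $v=\deg _{f_1}$ on $R$, for which $B\sm \zs =\{ f\in \kx \mid v(f)=0\} $, together with the leading-$f_1$-coefficient homomorphism, and aim to show that some power of $q$ already lies in $B^{\w }$. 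This would yield $\trd _kk[\mathcal P]=\trd _kB^{\w }=\trd _kB<n$ and finish the proof. Establishing this last claim — equivalently, that passing to the graded radical does not raise the transcendence degree of $B^{\w }$ — is where the detailed structural analysis of the initial forms of coordinates is needed, and I expect it to be the main difficulty.
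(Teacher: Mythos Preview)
The paper does not prove Theorem~\ref{thm:coords}: it is quoted verbatim from \cite[Theorem~1.4]{stbinv} and used as a black box. So there is no proof here to compare against, only your sketch to assess on its own.

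Your reductions are sound. With $A=k[f_2,\ldots ,f_m]$, $R=A[f_1]$ and $B=A\cap \kx $, the factorial closedness of $A$ in $R$ (degree-zero part for $\deg _{f_1}$) indeed passes to $B\subset \kx $, and your argument that $\trd _kB<n$ via algebraic closedness of $\mathrm{Frac}(A)$ in $\mathrm{Frac}(R)$ is correct. Passing to the field of fractions of $k$ and reducing to irreducible divisors is also fine.

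The gap you flag, however, is not merely a missing detail: the target you set yourself is \emph{false} at the level of abstraction you have reached. You are trying to deduce $\trd _kk[\mathcal P]<n$ from the two facts ``$B$ is factorially closed in $\kx $'' and ``$\trd _kB<n$'', via the intermediate claim that every irreducible factor $q$ of some $p^{\w }$ ($p\in B$ irreducible) has a power in $B^{\w }$. But take $n=2$, $\w =(1,1)$ and the factorially closed subalgebra $B_0:=k[x_1x_2+1]\subset k[x_1,x_2]$ (here $x_1x_2+1$ is irreducible, so $B_0$ is factorially closed and $\trd _kB_0=1$). Then $(x_1x_2+1)^{\w }=x_1x_2$, so $\mathcal P=\{ x_1,x_2\} $, $k[\mathcal P]=k[x_1,x_2]$ has transcendence degree $2$, and no power of $x_1$ lies in $B_0^{\w }=k[x_1x_2]$. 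With $S=\{ x_1,x_2\} $ the conclusion of the theorem would even fail outright for this $B_0$. Thus the properties you have isolated do not suffice; something specific to $B$ arising as $A\cap \kx $ with $R=A[f_1]$ must be used, and your sketch does not yet say what. The valuation $v=\deg _{f_1}$ you invoke lives on $R$, not on $\kx $, and there is no evident way it interacts with the $\w $-grading on $\kx $ so as to force powers of $q$ into $B^{\w }$. The proof in \cite{stbinv} presumably exploits additional structure (e.g., that $A$ is the kernel of a locally nilpotent derivation of $R$) which your outline has not yet brought to bear.
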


Clearly, 
the conclusion of Theorem~\ref{thm:coords} holds for 
$S=\{ x_1,\ldots ,x_n\} $. 
We mention that the case $m=n$ of Theorem~\ref{thm:coords} 
is implicit in \cite{DHM}. 
When $m=n$, 
Theorem~\ref{thm:coords} implies that, 
for each coordinate $f$ of $\kx $ over $k$ 
and $\w \in \Gamma ^n$, 
there exists $1\leq i\leq n$ 
such that $x_i$ does not divide $f^{\w }$.

The following lemma seems to be well known to the experts, 
but we give a proof in the next section 
for lack of a suitable reference.

\begin{lem}\label{lem:approx}
For any $f_1,\ldots ,f_r\in \kx \sm \zs $ 
with $r\geq 1$, 
and any totally ordered additive group 
$\Gamma \neq \zs $, 
the following assertions hold$:$

\noindent{\rm (i)} 
There exists $\w \in \Gamma ^n$ 
such that $f_i^{\w }$ is a monomial 
for $i=1,\ldots ,n$.

\noindent{\rm (ii)} 
For any 
$\w _1,\ldots ,\w _s\in \Gamma ^n$ 
with $s\geq 1$, 
there exists $\w \in \Gamma ^n$ 
such that 
$$
(\cdots (f_i^{\w _1})^{\w _2}\cdots )^{\w _s}
=f_i^{\w }
$$
for $i=1,\ldots ,r$. 
If $\w _1$ belongs to $(\Gamma _+)^n$, 
then we can take $\w $ from $(\Gamma _+)^n$. 
\end{lem}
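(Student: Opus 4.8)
The plan is to prove Lemma~\ref{lem:approx} by first establishing (i), then bootstrapping to (ii). For part (i), the natural idea is a genericity/perturbation argument: for a single $f\in \kx \sm \zs $, the set of weight vectors $\w \in \Gamma ^n$ for which $f^{\w }$ fails to be a monomial is a finite union of ``hyperplane-type'' conditions, namely the loci where two distinct monomials $x^{\bf a}$ and $x^{\bf b}$ appearing in $f$ satisfy $\sum_i a_i w_i = \sum_i b_i w_i$, i.e.\ $\sum_i (a_i-b_i) w_i = 0$. So I want to choose $\w$ avoiding all such nontrivial linear relations $\sum_i c_i w_i = 0$ with $c\in \Z^n\setminus\{0\}$ coming from the (finitely many) pairwise differences of exponent vectors across all of $f_1,\dots,f_r$. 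If $\Gamma$ contains an element of infinite order (e.g.\ $\Gamma = \Z$), one can take $\w = (1, N, N^2, \dots, N^{n-1})$ for $N$ large, or use that a free abelian subgroup of rank $n$ embeds in $\Gamma$; the subtlety is that $\Gamma$ is an arbitrary totally ordered additive group $\neq \{0\}$, which may have torsion-free rank $1$ only, or be something like $\Z$ with the usual order but no room for $n$ independent directions. The fix here is that a totally ordered additive group is torsion-free, hence contains $\Z$, but more importantly one should work with $\Q\otimes\Gamma$ or note that the $w_i$ need only avoid finitely many codimension-one conditions, which is possible as soon as $\Gamma$ has an element of infinite order — and every nonzero totally ordered group does, since torsion elements would violate the ordering. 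I would phrase this carefully: pick $0 < t \in \Gamma$, and set $w_i = m_i t$ for suitable integers $m_i$ chosen to avoid the finitely many relations $\sum c_i m_i = 0$; this reduces to a statement about $\Z^n$, where it is elementary.

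Next, for part (ii), I would argue by induction on $s$, but the cleaner route is: by (i) applied to the enlarged finite collection consisting of all the $f_i$ together with all the iterated initial forms $f_i^{\w _1}$, $(f_i^{\w _1})^{\w _2}$, etc.\ — wait, those depend on $\w_1,\dots,\w_s$ which are already fixed, so this is legitimate — choose $\w$ making each of these finitely many polynomials have monomial $\w$-initial form. Actually the key observation is more structural: if $\w$ is chosen (via (i)) so that $f_i^{\w}$ is a monomial, say $f_i^{\w} = c_i x^{{\bf a}^{(i)}}$, then for \emph{any} other weight $\vv$, $(f_i^{\w})^{\vv}$ is just that same monomial (a monomial is its own $\vv$-initial form), and one checks that taking a further initial form $(f_i^{\w})^{\vv}$ and then comparing with $f_i^{\w'}$ for a perturbed $\w'$ behaves well. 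I would instead proceed directly: the map $f \mapsto f^{\w_1}$ followed by $f^{\w_1}\mapsto (f^{\w_1})^{\w_2}$ selects, at each stage, the subset of surviving monomials of $f$ that are extremal first for $\w_1$, then among those for $\w_2$, and so on — this is exactly a lexicographic refinement. So $(\cdots(f_i^{\w_1})\cdots)^{\w_s}$ equals the initial form of $f_i$ with respect to the single $\Gamma^s$-valued (lexicographically ordered) weight $(\w_1,\dots,\w_s)$. Then I need to realize this lexicographic weight by a single $\Gamma^n$-valued weight: this is precisely the content of choosing $\w$ to avoid the finitely many linear relations among exponent differences of the $f_i$, combined with the dominance ordering $w_1$-part $\gg$ $w_2$-part $\gg \cdots$, which again can be arranged using a single positive $t\in\Gamma$ and integer coefficients with rapidly growing scales.

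The main obstacle I anticipate is the last-paragraph step: realizing a lexicographic (hierarchical) weight $(\w_1,\dots,\w_s)\in(\Gamma^n)^s$ by an honest single weight $\w\in\Gamma^n$, uniformly for the given finite set $f_1,\dots,f_r$. The content is that, although in general $\Gamma^s$-lexicographic refinements are strictly finer than what a single $\Gamma^n$-weight can see, on a \emph{fixed finite} set of polynomials only finitely many strict inequalities among $\Gamma$-linear combinations of the (finitely many) exponent vectors need to be enforced, and these can all be satisfied simultaneously by one weight vector provided we have enough ``room'' — which is supplied by picking $\w = t\cdot(M^{n-1},M^{n-2},\dots,M,1)\cdot(\text{something})$, or more robustly by a two-step scaling $w_i = \sum_{j} N^{s-j} m_{ij} t$, and verifying the finitely many required comparisons hold for $N\gg 0$. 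I would also need to handle the positivity addendum: if $\w_1\in(\Gamma_+)^n$, then since the first-level comparison dominates, the constructed $\w$ can be taken with all $w_i>0$ by adding a large positive multiple of $(t,\dots,t)$, which does not disturb any of the strict inequalities among exponent \emph{differences} (those are insensitive to adding a constant vector) — I should double-check that adding $c(t,\dots,t)$ shifts every $\w$-degree by $c\,t\cdot(\deg_{\text{total}})$ and hence preserves all initial forms, so positivity is free. Everything else is routine bookkeeping.
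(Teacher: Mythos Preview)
Your overall strategy matches the paper's: reduce to integer weight vectors via a fixed $t\in\Gamma_+$, and for (ii) interpret iterated initial forms lexicographically and realize the lexicographic weight by a single weight on the finite support set. But two steps fail as written.

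First, the positivity fix is wrong. Adding $c(t,\ldots,t)$ to $\w$ shifts $\degw x^a$ by $c|a|t$, which depends on the total degree $|a|$; since different monomials of $f_i$ generally have different total degrees, this does \emph{not} preserve initial forms---your ``hence'' is false. The correct argument, which you nearly state with ``the first-level comparison dominates,'' is that when $\w_1\in(\Gamma_+)^n$ its integer approximation can itself be taken in $\N^n$, and then the dominant term in the large-$N$ combination already forces positivity (this is exactly how the paper handles it). Second, and more seriously, forming $\sum_j N^{s-j}\w_j$ directly in $\Gamma^n$ need not realize the lexicographic order when $\Gamma$ is non-Archimedean: for instance in $\Gamma=\Z^2$ with the lex order, $N\cdot(0,1)$ never dominates $(1,0)$ for any $N$. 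Your alternative $w_i=\sum_j N^{s-j}m_{ij}t$ with integers $m_{ij}$ avoids this, but it presupposes that each $\w_j\in\Gamma^n$ can be replaced by an integer vector inducing the \emph{same} order relations on the finite support set---not merely avoiding finitely many hyperplanes, but reproducing the given pattern of equalities and strict inequalities. This approximation step is the main technical content of the paper's proof (Proposition~\ref{prop:approx}, established via a convex-cone/separating-hyperplane argument in Lemma~\ref{lem:Robb}); your hyperplane-avoidance argument suffices for (i), but for (ii) this step is missing.
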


Now, 
we prove Theorem~\ref{thm:weighted degree}. 
Without loss of generality, 
we may assume that $\Gamma \neq \zs $. 
First, we show (ii). 
Set $f_0=f$. 
By Lemma~\ref{lem:approx} (i) and (ii), 
there exist $\vv ',\w '\in \Gamma ^n$ 
such that $((f_j^{\w })^{\vv })^{\vv '}$ 
is a monomial and is equal to $f_j^{\w '}$ 
for each $j\in J\cup \zs $. 
We show that there exists $i_0\in I$ 
for which $x_{i_0}$ does not divide $f_j^{\w '}$ 
for any $j\in J\cup \zs $. 
Then, 
it follows that $x_{i_0}$ 
does not divide $(f_0^{\w })^{\vv }$. 
Moreover, 
$\degw f_j=\degw ((f_j^{\w })^{\vv })^{\vv '}$ 
belongs to $\sum _{i\in I\sm \{ i_0\} }\Zn w_i$ 
for each $j\in J$. 
Hence, $i_0$ belongs to $I_0$. 
Thus, 
the proof of (ii) is completed.

Set $A_l=k[\{ f_j\mid j\neq l\} ]$ 
for each $l\in J^c:=\{ 1,\ldots ,n\} \sm J$. 
Since $\# I>\# J$ by assumption, 
$k[\x _I]:=k[\{ x_i\mid i\in I\} ]$ 
is not contained in 
$k[\{ f_j\mid j\in J\} ]=\bigcap _{l\in J^c}A_l$. 
Hence, 
$k[\x _I]$ is not contained in $A_{j_0}$ 
for some $j_0\in J^c$. 
By Theorem~\ref{thm:coords}, 
there exists $i_0\in I$ such that $x_{i_0}$ 
does not divide $f^{\w '}$ 
for any $f\in A_{j_0}\cap k[\x_I]\sm \zs $. 
Since $k[\{ f_j\mid j\in J\} ]$ 
is contained in $A_{j_0}$ by the choice of $j_0$, 
and in $k[\x_I]$ by the definition of $J$, 
we have $k[\{ f_j\mid j\in J\} ]\subset A_{j_0}\cap k[\x_I]$. 
Thus, 
$f_j$ belongs to $A_{j_0}\cap k[\x_I]$ 
for each $j\in J\cup \zs $. 
Therefore, 
$x_{i_0}$ does not divide $f_j^{\w '}$ 
for any $j\in J\cup \zs $.

Next, we show (i). 
First, 
we prove (b) when $\# I>\# J$. 
Since $\prod _{j\in J}f_j$ is an element of 
$k[\{ f_j\mid j\in J\} ]\sm \zs $, 
there exists 
$i\in I_0$ such that $x_i$ does not divide 
$((\prod _{j\in J}f_j)^{\w })^{\vv }
=\prod _{j\in J}(f_j^{\w })^{\vv }$ 
by (ii). 
Then, 
$x_i$ does not divide $(f_j^{\w })^{\vv }$ 
for each $j\in J$, 
proving (b). 
It remains only to consider 
the case where $\# I=\# J$. 
Since $k[\x _I]=k[ \{f_j\mid j\in J\} ]$, 
we may assume that $I=J=\{ 1,\ldots ,n\} $. 
Thanks to Theorem~\ref{thm:base autom} (i), 
it suffices to show that (b) holds when 
$\degw F>|\w |$. 
By Lemma~\ref{lem:approx} (i) and (ii), 
there exist $\vv ',\w '\in \Gamma ^n$ 
such that $((f_j^{\w })^{\vv })^{\vv '}$ 
is a monomial and is equal to $f_j^{\w '}$ 
for each $j\in J$. 
We show that there exists $1\leq i_0\leq n$ 
for which $x_{i_0}$ does not divide $f_j^{\w '}$ 
for each $j\in J$. 
Then, 
it follows that 
$i_0$ belongs to $I_0$, 
and $x_{i_0}$ does not divide $(f_j^{\w })^{\vv }$ 
for each $j\in J$ as in the proof of (ii). 
Thus, 
the proof is completed.

Suppose the contrary. 
Then, 
$f_1^{\w '}\cdots f_n^{\w '}$ 
is divisible by $x_1,\ldots ,x_n$. 
We claim that there exists 
$\sigma \in \mathfrak{S}_n$ for which 
$f_j^{\w '}=\alpha _jx_{\sigma (j)}^{u_j}$ 
for $j=1,\ldots ,n$, 
where $\alpha _j\in k\sm \zs $ and $u_j\geq 1$. 
In fact, 
if not, 
there exists $j_0$ such that 
$f_{j_0}^{\w '}$ belongs to $k\sm \zs $, 
or $f_{j_0}^{\w '}$ is divisible by 
$x_{i_1}$ and $x_{i_2}$ for some $i_1\neq i_2$. 
In either case, 
there exists $l$ such that 
$(\prod _{j\neq l}f_j)^{\w '}=\prod _{j\neq l}f_j^{\w '}$ 
is divisible by $x_1,\ldots ,x_n$, 
contradicting Theorem~\ref{thm:coords} when $m=n$. 
Since $\degw F>|\w |$ by assumption, 
$f_j^{\w }$'s are algebraically dependent over $k$ 
by Theorem~\ref{thm:base autom} (ii). 
By Corollary~\ref{cor:IP} (ii), 
it follows that 
$(f_j^{\w })^{\vv }$'s are algebraically dependent over $k$, 
and hence so are $((f_j^{\w })^{\vv })^{\vv '}$'s. 
This contradicts that 
$((f_j^{\w })^{\vv })^{\vv '}=f_j^{\w '}
=\alpha _jx_{\sigma (j)}^{u_j}$ for each $j$.

\section{Approximation of a weight}
\label{sect:approx}
\setcounter{equation}{0}

The goal of this section 
is to prove Lemma~\ref{lem:approx}. 
We define $a\cdot \w =a_1w_1+\cdots +a_nw_n\in \Gamma $ 
for each $a=(a_1,\ldots ,a_n)\in \Z ^n$ 
and $\w \in \Gamma ^n$.

\begin{lem}\label{lem:Robb}
Let $S$ be a finite subset of $\Z ^n$ 
for which there exists $\w \in \Gamma ^n$ 
such that $a\cdot \w >0$ for each $a\in S$. 
Then, 
there exists $\vv\in \Z ^n$ such that 
$a\cdot \vv >0$ for each $a\in S$. 
\end{lem}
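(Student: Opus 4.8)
The statement to prove is Lemma~\ref{lem:Robb}: given a finite set $S\subset\Z^n$ and some $\w\in\Gamma^n$ with $a\cdot\w>0$ for all $a\in S$, find an \emph{integer} vector $\vv\in\Z^n$ with $a\cdot\vv>0$ for all $a\in S$. Conceptually this says that if a finite system of strict linear inequalities with integer coefficients has a solution over the ordered group $\Gamma$, it has a solution over $\Z$. The plan is to reduce from the general ordered group $\Gamma$ down to $\Q$ and then clear denominators.

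First I would reduce to a finitely generated subgroup: only the finitely many values $w_1,\ldots,w_n$ matter, so replace $\Gamma$ by the subgroup $\Gamma_0$ they generate, which is a finitely generated ordered abelian group, hence torsion-free, hence $\Z^r$ for some $r$ as a group. Now the ordering on $\Z^r$ is a translation-invariant total order, and $\w$ corresponds to a point in $\Z^r$ (via writing each $w_i$ in terms of a basis). Each constraint $a\cdot\w>0$ becomes: a certain explicit integer vector $b_a\in\Z^r$ (namely $b_a=\sum_i a_i(\text{coordinate vector of }w_i)$) satisfies $b_a>0$ in the given order on $\Z^r$. The key classical fact I would invoke — or prove in two lines — is that any total order on $\Z^r$ extends to a total order on $\Q^r$ (clear denominators) and then refines/extends to a total order on $\R^r$ compatible with the $\R$-vector-space structure, and such orders are represented lexicographically by a finite sequence of linear functionals $L_1,\ldots,L_s:\R^r\to\R$; equivalently, "$v>0$" iff the first nonzero entry of $(L_1(v),\ldots,L_s(v))$ is positive.

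With that in hand, the argument finishes cleanly. For each $a\in S$ we have $b_a>0$, so there is an index $j(a)$ with $L_1(b_a)=\cdots=L_{j(a)-1}(b_a)=0$ and $L_{j(a)}(b_a)>0$. Choose a single real parameter $t>0$ large and set $L=L_1+t^{-1}L_2+\cdots+t^{-(s-1)}L_s$; for $t$ sufficiently large, $L(b_a)>0$ for every $a\in S$ simultaneously (the dominant term $L_{j(a)}(b_a)/t^{\,j(a)-1}$ wins over the finitely many smaller correction terms, uniformly over the finite set $S$). Thus $L$ is a single \emph{real} linear functional on $\R^r$ with $L(b_a)>0$ for all $a\in S$. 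Now perturb: the set of real functionals satisfying these finitely many strict inequalities is open and nonempty in $(\R^r)^*$, so it contains a point with rational coordinates; scaling by a common denominator gives an integer functional $M:\Z^r\to\Z$ with $M(b_a)>0$ for all $a$. Finally set $\vv_i=M(\text{coordinate vector of }w_i)\in\Z$; then $a\cdot\vv=\sum_i a_i\vv_i=M(\sum_i a_i(\text{coord.\ vector of }w_i))=M(b_a)>0$ for every $a\in S$, as required.

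\textbf{Main obstacle.} The only nontrivial input is the structural fact that a translation-invariant total order on $\Z^r$ (or $\Q^r$) is given by lexicographic comparison via a finite tower of linear functionals — this is the Hahn embedding theorem for ordered abelian groups, specialized to the finite-rank case. If I want the lemma self-contained I would prove it by induction on $r$: the convex subgroups of $\Z^r$ form a finite chain, and quotienting by the largest proper convex subgroup gives an archimedean (hence $\Z$- or $\R$-embeddable) ordered group, yielding $L_1$; then recurse on the kernel. Everything after that — the $t^{-j}$ aggregation trick and the rational perturbation — is routine. An alternative route that sidesteps Hahn entirely: work directly over $\Q$, observe that $\{\vv\in\Q^r : b_a\cdot\vv>0\ \forall a\in S\}$ is a nonempty (it contains the image of $\w$ only if $\Gamma$ were already $\Q^r$, which is the gap) open convex cone — so this alternative still needs the reduction that $\Gamma$-solvability implies $\Q$-solvability, which is exactly the content requiring Hahn. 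So I would commit to the Hahn-embedding induction as the backbone.
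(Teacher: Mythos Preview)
Your argument is correct, but it takes a genuinely different route from the paper's.

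The paper avoids the structure theory of ordered groups entirely and argues by convex geometry in $\R^n$. It sets $C=\{\vv\in\R^n\mid a\cdot\vv>0\ \text{for all }a\in S\}$ and shows $C\neq\emptyset$; openness and the cone property then give a point in $\Z^n$. To show $C\neq\emptyset$, the paper looks at the cone $P=\sum_{a\in S}\R_{\geq 0}\,a$ and its lineality space $F=P\cap(-P)$, which is a face: there is $\vv\in\R^n$ with $a\cdot\vv=0$ on $F$ and $>0$ on $P\setminus F$. One only has to check $S\subset P\setminus F$. If some $a\in S$ lay in $F$, then $a=-\sum_{b\in S}\lambda_b b$ with $\lambda_b\in\Q_{\geq 0}$ (rationality of the relation uses $S\subset\Z^n$); clearing denominators and dotting with $\w$ yields $0<l(a\cdot\w)=-\sum_b l\lambda_b(b\cdot\w)\leq 0$, a contradiction. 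So the hypothesis on $\Gamma$ is used only at this last step, through a single integer relation, and no Hahn embedding or lexicographic representation is needed.

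Your approach trades this Farkas-type argument for the Hahn representation of the order on the finitely generated subgroup $\Gamma_0\cong\Z^r$, then aggregates the lex-tower $(L_1,\ldots,L_s)$ into a single real functional and perturbs. This is heavier in prerequisites (you correctly flag the Hahn step as the main obstacle and sketch its inductive proof), but it is arguably more transparent about \emph{why} $\Gamma$-solvability forces $\R$-solvability: you actually build the real functional out of the order data. The paper's proof is shorter and more self-contained once one accepts the cone/face fact (cited from a standard reference), while yours gives more structural insight at the cost of invoking or reproving Hahn.
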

\begin{proof}
Let $C$ be the set of $\vv \in \R ^n$ 
such that $a\cdot \vv >0$ for each $a\in S$. 
We show that $C\neq \emptyset $. 
Then, 
it follows that $C\cap \Q ^n\neq \emptyset $, 
since $C$ is an open subset of $\R ^n$ 
for the Euclidean topology. 
Since $C$ is a cone, 
this implies that 
$C\cap \Z ^n\neq \emptyset $. 
Thus, 
the proof is completed. 
We define $P=\{ \sum _{a\in S}\lambda _aa\mid 
(\lambda _a)_a\in (\Rn )^S\} $, 
where $\Rn :=\{ \lambda \in \R \mid \lambda \geq 0\} $. 
Then, 
$F:=P\cap \{ -a\mid a\in P\} $ 
is a {\it face} of $P$, 
i.e., 
there exists $\vv \in \R ^n$ such that 
$a\cdot \vv =0$ and $b\cdot \vv >0$ 
for each $a\in F$ and $b\in P\sm F$ 
(cf.~\cite[Proposition A5]{oda}). 
We show that $\vv $ belongs to $C$. 
By the choice of $\vv $, 
it suffices to check that 
$S$ is contained in $P\sm F$. 
Suppose the contrary. 
Then, 
we have $S\cap F\neq \emptyset $, 
since $S$ is contained in $P$. 
Hence, 
there exist $a\in S$ and 
$(\lambda _b)_b\in (\Rn )^S$ 
such that $a=-\sum _{b\in S}\lambda _bb$. 
Since $S$ is a subset of $\Z ^n$, 
we may take 
$(\lambda _b)_b$ from $(\Q \cap \Rn )^S$. 
Choose $l\in \N $ so that $(l\lambda _b)_b$ 
belongs to $(\Zn )^S$. 
Then, 
we have 
$0<l(a\cdot \w )=-\sum _{b\in S}l\lambda _b(b\cdot \w )\leq 0$ 
by the assumption that 
$b\cdot \w >0$ for each $b\in S$. 
This is a contradiction. 
Therefore, 
$\vv $ belongs to $C$. 
\end{proof}

Let $\Gamma $ and $\Gamma '$ 
be totally ordered additive groups. 
For $\w \in \Gamma ^n$, 
$\w '\in (\Gamma ')^n$ and $S\subset \Z ^n$, 
we define $\w \sim _S\w '$ if, 
for each $a,b\in S$, 
we have $a\cdot \w \geq b\cdot \w $ 
if and only if 
$a\cdot \w '\geq b\cdot \w '$. 
For 
$$
f=\sum _{i_1,\ldots ,i_n}
\alpha _{i_1,\ldots ,i_n}x_1^{i_1}\cdots x_n^{i_n}\in \kx  
$$
with $\alpha _{i_1,\ldots ,i_n}\in k$, 
we define $\supp f$ to be the set of 
$(i_1,\ldots ,i_n)\in (\Zn )^n$ 
such that $\alpha _{i_1,\ldots ,i_n}\neq 0$. 
Then, 
we have $f^{\w }=f^{\w '}$ 
if $\w \sim _S\w '$ for $S=\supp f$. 
More generally, 
set 
$S=\bigcup _{i=1}^r\supp f_i$ 
for $f_1,\ldots ,f_r\in \kx $ with $r\geq 1$. 
Then, 
we have $f_i^{\w }=f_i^{\w '}$ 
for $i=1,\ldots ,r$ if $\w \sim _S\w '$.

\begin{prop}[Approximation of a weight]\label{prop:approx}
For any finite subset $S$ of $\Z ^n$ 
and $\w \in \Gamma ^n$, 
there exists $\vv \in \Z ^n$ such that 
$\w \sim _S\vv $. 
\end{prop}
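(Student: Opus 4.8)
The plan is to reduce the statement to Lemma~\ref{lem:Robb} by treating separately the strict comparisons and the equalities among the values $a\cdot\w $ for $a\in S$. Put
$$
U=\{ a-b\mid a,b\in S,\ a\cdot\w =b\cdot\w \}
\quad\text{and}\quad
U'=\{ a-b\mid a,b\in S,\ a\cdot\w >b\cdot\w \} ;
$$
these are finite subsets of $\Z ^n$, and $U=-U$. It suffices to find $\vv \in \Z ^n$ such that $d\cdot\vv =0$ for every $d\in U$ and $d\cdot\vv >0$ for every $d\in U'$: for such a $\vv $ and any $a,b\in S$, the element $a\cdot\vv -b\cdot\vv $ of $\Z $ is positive, negative, or zero exactly when $a\cdot\w -b\cdot\w $ is, so $\w \sim _S\vv $.

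Applying Lemma~\ref{lem:Robb} to $U'$ alone does not work, since a $\vv $ that respects all strict comparisons may still violate an equality, and then $\w \sim _S\vv $ fails. To force the equalities I would pass to a quotient where they hold automatically. Let $\phi \colon \Z ^n\to \Gamma $ be the homomorphism $\phi (a)=a\cdot\w $. It vanishes on the subgroup $M\subset \Z ^n$ generated by $U$, so it factors through $\Z ^n/M$; since $\Gamma $ is totally ordered, hence torsion free, $\phi $ factors further through $G:=(\Z ^n/M)/({\rm torsion})$, a free abelian group of some rank $s$. Let $q\colon \Z ^n\to G$ be the resulting surjection, so that $U\subset M\subset \ker q$, and write $\phi =\psi \circ q$ with $\psi \colon G\to \Gamma $. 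Fixing an isomorphism $G\cong \Z ^s$, the homomorphism $\psi $ is given by some $\bar\w \in \Gamma ^s$, that is, $\psi (g)=g\cdot\bar\w $.

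For each $d=a-b\in U'$ we have $q(d)\cdot\bar\w =\psi (q(d))=\phi (d)=a\cdot\w -b\cdot\w >0$, so the finite set $q(U')\subset \Z ^s$ satisfies the hypothesis of Lemma~\ref{lem:Robb} (with $\Z ^s$ in place of $\Z ^n$). Thus there is $\bar\vv \in \Z ^s$ with $q(d)\cdot\bar\vv >0$ for all $d\in U'$. Since every homomorphism $\Z ^n\to \Z $ is of the form $x\mapsto x\cdot\vv $, there is $\vv \in \Z ^n$ with $x\cdot\vv =q(x)\cdot\bar\vv $ for all $x$. Then $d\cdot\vv =q(d)\cdot\bar\vv >0$ for $d\in U'$, while $d\cdot\vv =0$ for $d\in U\subset \ker q$; this is exactly the condition isolated in the first paragraph, so $\w \sim _S\vv $. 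The one step that needs an idea is the passage to the torsion-free quotient $G$, which is what lets the equality constraints take care of themselves; everything else is routine manipulation with dual lattices and a single invocation of Lemma~\ref{lem:Robb}.
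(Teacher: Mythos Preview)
Your proof is correct and follows essentially the same strategy as the paper's: reduce to finding $\vv $ that annihilates the equality differences and is positive on the strict-inequality differences, factor through an intermediate free abelian group where the equality constraints vanish automatically, apply Lemma~\ref{lem:Robb} there, and pull back. The only cosmetic difference is that the paper builds the intermediate free group on the target side (choosing a $\Z $-basis of the subgroup of $\Gamma $ generated by $w_1,\ldots ,w_n$, which is free because $\Gamma $ is torsion-free) whereas you build it on the source side (the torsion-free quotient of $\Z ^n/M$, through which $\phi $ factors for the same reason); both choices work and the underlying idea is identical.
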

\begin{proof}
Let $T_0$ (resp.\ $T_1$) 
be the set of $a-b$ for $a,b\in S$ 
such that $a\cdot \w =b\cdot \w $ 
(resp.\ $a\cdot \w >b\cdot \w $). 
It suffices to construct $\vv \in \Z ^n$ 
such that 
$a\cdot \vv =0$ and $b\cdot \vv >0$ 
for each $a\in T_0$ and $b\in T_1$. 
Since $\Gamma $ is torsion-free, 
the $\Z $-submodule $\Gamma '$ of $\Gamma $ 
generated by $w_1,\ldots ,w_n$ 
is a free $\Z $-module of finite rank. 
Take a $\Z $-basis $u_1,\ldots ,u_r$ of $\Gamma '$, 
and put $\bu =(u_1,\ldots ,u_r)$. 
Then, 
we may write $\w =\bu U$, 
where $U$ is an $r\times n$ 
matrix with integer entries. 
Let $U'$ be the transposition of $U$. 
Then, 
we have 
$(aU')\cdot \bu =a\cdot (\bu U)=a\cdot \w =0$ 
for each $a\in T_0$. 
Since $u_1,\ldots ,u_r$ 
are linearly independent over $\Z $, 
it follows that $aU'=0$ for each $a\in T_0$. 
Since $(aU')\cdot \bu =a\cdot \w >0$ 
for each $a\in T_1$, 
and $\{ aU'\mid a\in T_1\} $ 
is a finite subset of $\Z ^r$, 
there exists $\vv '\in \Z ^r$ 
such that $(aU')\cdot \vv '>0$ for each $a\in T_1$ 
by Lemma~\ref{lem:Robb}. 
Then, 
$\vv :=\vv 'U$ is an element of $\Z ^n$ 
such that 
$a\cdot \vv =(aU')\cdot \vv '=0$ 
and 
$b\cdot \vv =(bU')\cdot \vv '>0$ 
for each $a\in T_0$ and $b\in T_1$. 
Therefore, 
$\vv $ satisfies the required condition. 
\end{proof}

Under the assumption of Proposition~\ref{prop:approx}, 
there exists an element 
$\vv =(v_1,\ldots ,v_n)$ of $\Z ^n$ 
such that $\w \sim _S\vv $ and 
$v_i>0$ (resp.\ $v_i<0$) 
if and only if $w_i>0$ (resp.\ $w_i<0$) 
for $i=1,\ldots ,n$ for the following reason. 
Let $\e _1,\ldots ,\e _n$ 
be the coordinate unit vectors of $\R ^n$, 
and let $S'=S\cup \{ 0,\e _1,\ldots ,\e _n\} $. 
By Proposition~\ref{prop:approx}, 
there exists $\vv \in \Z ^n$ 
such that $\w \sim _{S'}\vv $. 
Then, 
this $\vv $ has the property stated above, 
since $\e _i\cdot \vv =v_i$ and $\e _i\cdot \w =w_i$ 
for each $i$. 
In particular, 
if $\w $ is an element of $(\Gamma _+)^n$, 
then we can take $\vv $ from $\N ^n$.

Now, 
let us prove Lemma~\ref{lem:approx}. 
To show (i), 
take any $\bu \in \R ^n$ 
whose components 
are linearly independent over $\Q $. 
Then, 
$f_i^{\bu }$ is a monomial for each $i$. 
Set $S=\bigcup _{i=1}^r\supp f_i$. 
Then, 
there exists $\vv =(v_1,\ldots ,v_n)\in \Z ^n$ 
such that 
$\vv \sim _S\bu $ by Proposition~\ref{prop:approx}. 
Since $\Gamma \neq \zs $ by assumption, 
we may find $w\in \Gamma _+$. 
Then, 
$\w :=(v_1w,\ldots ,v_nw)$ is an element of $\Gamma ^n$ 
such that $\w \sim _{\Z ^n}\vv $. 
Since $\vv \sim _S\bu $, 
we get $\w \sim _S\bu $. 
Therefore, 
$f_i^{\w }=f_i^{\bu }$ is a monomial for each $i$, 
proving (i).

Next, 
we prove (ii) by induction on $s$. 
When $s=1$, 
the assertion is clear. 
Assume that $s\geq 2$. 
Then, 
by induction assumption, 
there exists 
$\w '\in \Gamma ^n$ such that 
$(\cdots (f_i^{\w _1})^{\w _2}\cdots )^{\w _{s-1}}
=f_i^{\w '}$ 
for $i=1,\ldots ,r$. 
By Proposition~\ref{prop:approx}, 
there exist $\vv ',\vv ''\in \Z ^n$ 
such that $\w '\sim _S\vv '$ and $\w _s\sim _S\vv ''$. 
Then, 
we have 
$$
h_i:=
((\cdots (f_i^{\w _1})^{\w _2}\cdots )^{\w _{s-1}})^{\w _{s}}
=(f_i^{\w '})^{\w _s}=(f_i^{\vv '})^{\vv ''}
$$
for $i=1,\ldots ,r$. 
We define $\vv (t)=\vv '+t\vv ''\in \R ^n$ 
for each $t\in \R $. 
Then, 
we have 
$$
(a-b)\cdot \vv (t)
=(a-b)\cdot \vv '+(a-b)\cdot (t\vv '')
=t\bigl((a-b)\cdot \vv ''\bigr)
$$ 
for each $a,b\in T_i:=\supp f_i^{\vv '}$, 
since $a\cdot \vv '=b\cdot \vv '=\deg _{\vv '}f_i$. 
Hence, 
if $t>0$, then we have 
$\vv (t)\sim _{T_i}\vv ''$, 
and so $(f_i^{\vv '})^{\vv (t)}=(f_i^{\vv '})^{\vv ''}$. 
Since 
$\deg _{\vv (t)}f_i^{\vv '}$ and 
$\deg _{\vv (t)}{(f_i-f_i^{\vv '})}$ 
are continuous functions in $t$ satisfying 
$$
\deg _{\vv (0)}f_i^{\vv '}
=\deg _{\vv '}f_i^{\vv '}
>\deg _{\vv '}{(f_i-f_i^{\vv '})}
=\deg _{\vv (0)}{(f_i-f_i^{\vv '})}, 
$$ 
there exists $t_0>0$ such that 
$\deg _{\vv (t)}f_i^{\vv '}>\deg _{\vv (t)}{(f_i-f_i^{\vv '})}$ 
for $i=1,\ldots ,r$ for any $0<t<t_0$. 
Here, 
we regard $\deg _{\vv (t)}{(f_i-f_i^{\vv '})}$ 
as a constant function with value $-\infty $ 
if $f_i^{\vv '}=f_i$. 
Then, 
for any $0<t<t_0$, 
we have 
$$
f_i^{\vv (t)}
=\bigl( 
f_i^{\vv '}+(f_i-f_i^{\vv '})
\bigr) ^{\vv (t)}
=(f_i^{\vv '})^{\vv (t)}
=(f_i^{\vv '})^{\vv ''}
=h_i
$$ 
for $i=1,\ldots ,r$. 
Now, 
take any $w\in \Gamma _+$ and $t\in \Q $ with $0<t<t_0$. 
Let $u\in \N $ be such that 
$(u_1,\ldots ,u_n):=u\vv (t)$ belongs to $\Z ^n$. 
Then, 
$\w :=(u_1w,\ldots ,u_nw)$ is an element of $\Gamma ^n$ 
such that $\w \sim _{\Z ^n}\vv (t)$, 
and hence $f_i^{\w }=f_i^{\vv (t)}=h_i$ for 
$i=1,\ldots ,r$.

If $\w _1$ is an element of $(\Gamma _+)^n$, 
then we can take $\w '$ from $(\Gamma _+)^n$ 
by induction assumption. 
Then, 
$\vv '$ can be taken from $\N ^n$ 
as mentioned after Proposition~\ref{prop:approx}. 
In this case, 
all the components of $\vv (t)$ become positive 
for sufficiently small $t>0$. 
For such $t$, 
the element $\w $ of $\Gamma ^n$ constructed above 
belongs to $(\Gamma _+)^n$. 
This completes the proof of Lemma~\ref{lem:approx}.

\section{Van der Kulk's theorem}\label{sect:vdk}
\setcounter{equation}{0}

Assume that $n=2$ and $k$ is a field. 
Then, 
$\deg f_1\mid \deg f_2$ or 
$\deg f_2\mid \deg f_1$ 
holds for each $F\in \Aut _k\kx $ 
by van der Kulk~\cite{Kulk}. 
If $d_i:=\deg _{x_i}f>0$ for $i=1,2$ 
for a coordinate $f$ of $\kx $ over $k$, 
then the following statements hold by 
Makar-Limanov~\cite{ML} 
(see also~Dicks~\cite{Dicks}): 

\noindent{\rm (i)} 
$d_1\mid d_2$ or $d_2\mid d_1$. 

\noindent{\rm (ii)} 
$(d_1,0)$ and $(0,d_2)$ belong to $\supp f$. 

\noindent{\rm (iii)} 
$\supp f$ is contained 
in the convex hull of 
$(0,0)$, $(d_1,0)$ and $(0,d_2)$ in $\R ^2$.

In this section, 
we revisit the well-known results stated above. 
For each $f_1,f_2\in \kx $, 
we denote $f_1\approx f_2$ 
if $f_1$ and $f_2$ are linearly dependent over $k$. 
Clearly, 
$f_1\approx f_2$ implies $\degw f_1=\degw f_2$ 
for any $\w \in \Gamma ^n$.

The following lemma 
is a weighted version of van der Kulk's theorem, 
which is proved by using Makar-Limanov's theorem.

\begin{lem}\label{lem:jvdk}
Assume that $n=2$ and $k$ is a field. 
Let $F\in \Aut _k\kx $ 
and $\w \in (\Gamma _{\geq 0})^2$ 
be such that $\degw F>|\w |$. 
Then, 
$\degw f_1$ and $\degw f_2$ are positive, 
and $f_1^{\w }\approx (f_2^{\w })^u$ 
or $f_2^{\w }\approx (f_1^{\w })^u$ 
holds for some $u\geq 1$. 
\end{lem}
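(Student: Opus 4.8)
The plan is to reduce the weighted statement to the classical (unweighted) results of van der Kulk and Makar–Limanov by passing to initial forms. First I would use Theorem~\ref{thm:base autom}~(ii): since $\degw F>|\w |$, the pair $(f_1^{\w },f_2^{\w })$ is algebraically dependent over $k$, and in particular neither $f_1^{\w }$ nor $f_2^{\w }$ is a nonzero scalar (otherwise the other would have $\w$-degree equal to one of the $w_i$, forcing $\degw F=|\w|$ when combined with Theorem~\ref{thm:base autom}~(i)); a short argument along these lines also gives $\degw f_1>0$ and $\degw f_2>0$, using $\w \in (\Gamma_{\geq 0})^2$ so that degree $0$ would mean a constant.

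Next I would invoke Lemma~\ref{lem:approx}: choose a weight so that both $f_1^{\w}$ and related polynomials have monomial initial forms, or rather iterate initial forms to bring the situation into a setting where I can compare with the standard $\deg$. The cleaner route is: apply the classical van der Kulk theorem to $F$ itself over the field $k$ in the variable $x_1,x_2$ — but that controls $\deg f_i$, not $\degw f_i$. So instead I would work directly with the subalgebra $A=k[f_1,f_2]=\kx$ and its initial algebra $A^{\w}=\kx^{\w}$. The key point is that $f_1^{\w}$ and $f_2^{\w}$ are algebraically dependent, hence (being in a polynomial ring in two variables over a field) the subfield $k(f_1^{\w},f_2^{\w})$ has transcendence degree $1$, so by Lüroth there is a single $h$ with $f_1^{\w},f_2^{\w}\in k[h]$ up to the right normalization; writing $f_i^{\w}=c_i h^{u_i}+(\text{lower})$ and using that both are $\w$-homogeneous forces $f_i^{\w}\approx h^{u_i}$ for a common $\w$-homogeneous $h$. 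Then one of $u_1,u_2$ divides the other, giving the claimed relation $f_1^{\w}\approx (f_2^{\w})^u$ or $f_2^{\w}\approx (f_1^{\w})^u$.

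To make the divisibility and the homogeneity argument rigorous I would exploit Makar–Limanov's theorem as quoted: after a permutation and using Lemma~\ref{lem:approx} to replace $\w$ by an integer weight inducing the same initial forms on a finite support set, I can assume $\w \in \N^2$ (or treat each coordinate's positivity separately). A degree-$(w_1,w_2)$-homogeneous polynomial in $k[x_1,x_2]$ with $w_1,w_2>0$ that is a power in the Lüroth sense must be, up to scalar, $x_1^{a}x_2^{b}$ or a single binomial of the shape $x_1^{a}-\lambda x_2^{b}$-type; combining this with algebraic dependence of $f_1^{\w}$ and $f_2^{\w}$ pins down the common generator $h$ and yields the power relation. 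The main obstacle I expect is precisely this structural step: showing that two algebraically dependent $\w$-homogeneous polynomials in two variables over a field are (up to scalars) powers of a common $\w$-homogeneous polynomial, and extracting the divisibility of exponents — this is where I would lean hardest on Makar–Limanov/Dicks, possibly via the Newton polygon description in items (ii)--(iii) of the cited results, transported to the $\w$-graded setting by the weight-approximation lemma. The positivity claims $\degw f_i>0$ are comparatively routine once Theorem~\ref{thm:base autom} is in hand.
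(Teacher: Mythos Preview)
Your three-step outline---algebraic dependence of $f_1^{\w},f_2^{\w}$ via Theorem~\ref{thm:base autom}(ii), a common-generator/power relation, then a divisibility argument invoking Makar--Limanov---matches the paper's architecture, and your L\"uroth route to the relation $(f_1^{\w})^{u_1}\approx (f_2^{\w})^{u_2}$ is a legitimate alternative to the paper's localization argument (the paper instead observes that $h:=(f_1^{\w})^{u_1}/(f_2^{\w})^{u_2}$ lies in the degree-zero part of $\kx[1/f_2^{\w}]$, hence $k[h][f_2^{\w}]$ is polynomial over $k[h]$, forcing $h\in k$). The positivity of $\degw f_i$ is handled as you say.

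The genuine gap is the divisibility step. Your structural claim that a $\w$-homogeneous L\"uroth generator $h$ must be a monomial or a binomial $x_1^{a}-\lambda x_2^{b}$ is false: for $\w=(1,1)$ the polynomial $h=x_1^{2}+x_1x_2+x_2^{2}$ is $\w$-homogeneous and irreducible over $\Q$, and nothing you wrote excludes $f_1^{\w}\approx h^{2}$, $f_2^{\w}\approx h^{3}$ on those grounds alone. Nor does applying Makar--Limanov's triangle description directly to $f_1$ and $f_2$ give $u_1\mid u_2$ or $u_2\mid u_1$: it only relates $\deg_{x_1}f_i$ to $\deg_{x_2}f_i$ for each $i$ separately, not $\degw f_1$ to $\degw f_2$.

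The paper's key idea, which you do not locate, is to pass to the \emph{inverse} automorphism and change the weight. Writing $F^{-1}=(g_1,g_2)$ and using $\deg_{\w_F}F^{-1}\ge |\w_F|=\degw F>|\w|$, one finds $l$ with $\deg_{\w_F}g_l>w_l=\degw F(g_l)$; by Proposition~\ref{prop:IP}(ii) this forces $F^{\w}(g_l^{\w_F})=0$, so $g_l^{\w_F}$ is not a monomial. Now apply Makar--Limanov's (i)--(iii) to the coordinate $g_l$: its support sits in the triangle with vertices $(d_1,0),(0,d_2)$, both present in $\supp g_l$, and $d_1\mid d_2$ or $d_2\mid d_1$. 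Since $\w_F\in(\Gamma_+)^2$ and the $\w_F$-initial form of $g_l$ is not a monomial, the linear functional $\w_F$ must be maximized at both hypotenuse corners, giving $d_1\degw f_1=d_2\degw f_2$. Combined with $u_1\degw f_1=u_2\degw f_2$ and $\gcd(u_1,u_2)=1$, the divisibility $d_1\mid d_2$ or $d_2\mid d_1$ forces $u_1=1$ or $u_2=1$. This switch to $F^{-1}$ with weight $\w_F$ is the missing ingredient in your plan.
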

\begin{proof}
Since $\degw F>|\w |$, 
we have $w_1>0$ and $w_2\geq 0$, 
or $w_1\geq 0$ and $w_2>0$. 
First, 
assume that $f_1$ and $f_2$ 
do not belong to $k[x_1]$ or $k[x_2]$. 
Then, 
$\degw f_1$ and $\degw f_2$ are positive. 
Since $\degw F>|\w |$, 
we know 
by Theorem~\ref{thm:base autom} (ii) that 
$f_1^{\w }$ and $f_2^{\w }$ 
are algebraically dependent over $k$. 
Hence, 
$\degw f_1$ and $\degw f_2$ are linearly dependent over $\Z $ 
by Corollary~\ref{cor:indep deg}. 
Since $\degw f_i>0$ for $i=1,2$, 
there exist $u_1,u_2\in \N $ 
such that $\gcd (u_1,u_2)=1$ 
and $u_1\degw f_1=u_2\degw f_2$. 
We show that 
$(f_1^{\w })^{u_1}\approx (f_2^{\w })^{u_2}$. 
Observe that 
a $\Gamma $-grading 
$\kx [1/f_2^{\w }]=\bigoplus _{\gamma \in \Gamma }\kx [1/f_2^{\w }]_{\gamma }$ 
is induced from the $\w $-weighted $\Gamma $-grading of $\kx $. 
Since 
$h:=(f_1^{\w })^{u_1}/(f_2^{\w })^{u_2}$ 
belongs to $\kx [1/f_2^{\w }]_0$, 
and $\degw f_2^{\w }>0$, 
we see that $k[h][f_2^{\w }]$ 
is the polynomial ring in $f_2^{\w }$ over $k[h]$. 
Since $h$ and $f_2^{\w }$ 
are algebraically dependent over $k$, 
it follows that $h$ belongs to $k$. 
Therefore, 
we get $(f_1^{\w })^{u_1}=h(f_2^{\w })^{u_2}\approx (f_2^{\w })^{u_2}$. 
It remains only to show that $u_1=1$ or $u_2=1$. 
Set $g_i=F^{-1}(x_i)$ for $i=1,2$. 
Then, 
we have 
$$
\deg _{\w _F}g_1+\deg _{\w _F}g_2
=\deg _{\w _F}F^{-1}\geq |\w _F|
=\degw F>|\w |=w_1+w_2
$$
by Theorem~\ref{thm:base autom} (i). 
Hence, 
$\deg _{\w _F}g_l>w_l=\degw x_l=\degw F(g_l)$ 
holds for some $l\in \{ 1,2\} $. 
Then, 
we have $F^{\w }(g_l^{\w _F})=0$ 
by Proposition~\ref{prop:IP} (ii). 
This implies that 
$g_l^{\w _F}$ is not a monomial. 
Note that $g_l$ does not belong to $k[x_1]$ or $k[x_2]$, 
for otherwise $f_1$ or $f_2$ belongs to $k[x_l]$, 
a contradiction. 
Hence, 
$d_i:=\deg _{x_i}g_l>0$ holds for $i=1,2$. 
Thus, 
the statements (i), (ii) and (iii) above 
hold for $f=g_l$. 
Since $\w _F$ belongs to $(\Gamma _+)^2$, 
and $g_l^{\w _F}$ is not a monomial, 
we see from (ii) and (iii) that 
$(d_1,0)\cdot \w _F$ and $(0,d_2)\cdot \w _F$ 
are both equal to $\deg _{\w _F}g_l$. 
Hence, 
we have $d_1\degw f_1=d_2\degw f_2$. 
Since $u_1\degw f_1=u_2\degw f_2$ 
and $\gcd (u_1,u_2)=1$, 
we conclude from (i) that $u_1=1$ or $u_2=1$.

Next, 
assume that $f_{i_1}$ belongs to $k[x_{j_1}]$ 
for some $i_1,j_1\in \{ 1,2\} $. 
Then, 
we may write $f_{i_1}=\alpha _1x_{j_1}+\beta $ 
and $f_{i_2}=\alpha _2x_{j_2}+p$. 
Here, 
$\alpha _1,\alpha _2\in k^{\times }$, 
$\beta \in k$ and $p\in k[x_{j_1}]$, 
and $i_2,j_2\in \{ 1,2\} $ are such that 
$i_2\neq i_1$ and $j_2\neq j_1$. 
If $w_{j_1}=0$, 
then $\degw F=\degw f_{i_2}=w_{j_2}=|\w |$, 
a contradiction. 
Hence, 
we have $w_{j_1}>0$, 
and so $f_{i_1}^{\w }=\alpha _1x_{j_1}$. 
Since $\degw f_{i_1}=w_{j_1}$ and $\degw F>|\w |$, 
we know that $\degw f_{i_2}>w_{j_2}$. 
This implies that 
$f_{i_2}^{\w }=p^{\w }\approx x_{j_1}^u$ 
for some $u\geq 1$. 
Since $f_{i_1}^{\w }\approx x_{j_1}$, 
it follows that 
$f_{i_2}^{\w }\approx (f_{i_1}^{\w })^u$. 
\end{proof}

We mention that the author~\cite[Corollary 4.4]{SUineq} 
proved a statement similar to Lemma~\ref{lem:jvdk} 
as an application of the generalized Shestakov-Umirbaev inequality 
when $\Gamma =\Z $ and $k$ is a field of characteristic zero.

Now, assume that $n\geq 2$ and $k$ is a domain. 
Let us consider the following conditions for 
$F\in \Aut _k\kx $ and $\w \in (\Gamma _{\geq 0})^n$: 

\smallskip 

\nd {\rm (a)} 
$f_1^{\w }$ and $f_2^{\w }$ belong to $k[x_1,x_2]$. 

\smallskip 

\nd {\rm (b)} 
$\degw f_1+\degw f_2>w_1+w_2$. 

\smallskip 

\nd {\rm (c)} 
$\degw f_i=w_i$ for $i=3,\ldots ,n$. 

\smallskip 

\nd {\rm (d)} 
$k[x_1,x_2,f_3,\ldots ,f_n]=\kx $. 

\smallskip

Then, we have the following theorem.

\begin{thm}\label{thm:jvdk}
Assume that $n\geq 2$ and $k$ is a domain. 
If $F\in \Aut _k\kx $ and $\w \in (\Gamma _{\geq 0})^n$ 
satisfy {\rm (a)} through {\rm (d)}, 
then the following assertions hold$:$

\noindent{\rm (i)} 
$f_1^{\w }\approx (f_2^{\w })^u$ 
or $f_2^{\w }\approx (f_1^{\w })^u$ 
for some $u\geq 1$.

\noindent{\rm (ii)} 
$\degw f_l>0$ for $l=1,2$.

\noindent{\rm (iii)} 
For any commutative ring $\kappa $, 
there exists $G\in \E _n^{\w }(\kappa )$ 
such that $g_i=x_i$ for $i=3,\ldots ,n$ 
and $G\sim _{\w }F$. 
In particular, 
$\mdegw F$ belongs to $|\E _n^{\w }|$. 
\end{thm}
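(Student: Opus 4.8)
The plan is to reduce to the two-variable case treated in Lemma~\ref{lem:jvdk}. First I would observe that conditions (c) and (d) say that $F_1:=(f_3,\ldots,f_n)$ together with $x_1,x_2$ generate $\kx$, with each $f_i$ ($i\ge3$) having $\w$-degree exactly $w_i$. Using Lemma~\ref{lem:w-deg basic} (or the triangularity-type arguments in Section~\ref{sect:mdeg}) I want to show that after a tame change of variables fixing nothing essential, $F$ restricts to an automorphism of $k[x_1,x_2]$ in the relevant "leading" sense. More precisely: condition (d) implies that modulo the ideal generated by differences $f_i-x_i$-type relations, $\kx/(f_3,\ldots,f_n)\cong k[x_1,x_2]$ in a way compatible with the grading, so that $(f_1,f_2)$ induces an element $\bar F$ of $\Aut_{k'}k'[x_1,x_2]$ for a suitable domain (in fact field of fractions) $k'$. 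Condition (b) gives $\deg_{\w}\bar f_1+\deg_{\w}\bar f_2>w_1+w_2$, and condition (a) ensures the initial forms already live in $k[x_1,x_2]$ so nothing is lost in passing to the quotient.

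Then (i) and (ii) would follow by invoking Lemma~\ref{lem:jvdk} applied to $\bar F$ over the field $k'=\mathrm{Frac}(k)$: it yields $\bar f_1^{\w}\approx(\bar f_2^{\w})^u$ or the reverse, and $\deg_{\w}\bar f_l>0$ for $l=1,2$. Since (a) says the initial forms are genuinely in $k[x_1,x_2]$ (not just their images), and $k\hookrightarrow k'$, the relation $f_1^{\w}\approx(f_2^{\w})^u$ holds already over $k$ up to a unit in $k'$; here I would need to check that "linearly dependent over $k'$" plus "both in $k[x_1,x_2]$" forces linear dependence over $\mathrm{Frac}(k)$, which is immediate, and then that the proportionality constant can be taken in $k$ after clearing denominators — or simply work with $\approx$ over $\mathrm{Frac}(k)$ throughout, which is all (iii) will need. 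Statement (ii) transfers verbatim.

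For (iii), having established (say) $f_2^{\w}\approx(f_1^{\w})^u$ with $u\ge1$, the idea is to build $G$ explicitly over an arbitrary commutative ring $\kappa$ so that $\mdegw G=\mdegw F$. Set $d_i=\deg_{\w}f_i$. For $i\ge3$ take $g_i=x_i$, which by (c) has $\w$-degree $w_i=d_i$. For the first two coordinates: by (ii) and Lemma~\ref{lem:jvdk}, $d_1\mid d_2$ (in the appropriate sense coming from $f_1^{\w}$ being, up to scalar, a power of a single polynomial whose $u$-th power is $\approx f_2^{\w}$), so I can choose an elementary automorphism raising $x_2$ by a monomial of $\w$-degree $d_2$ built from $x_1$: the key is that from $f_2^{\w}\approx(f_1^{\w})^u$ one reads off that $d_2=u\,d_1$ and $f_1^{\w}$ itself has $\w$-degree $d_1\ge w_1$ realized by a coordinate-type expression, so one first arranges $g_1$ with $\deg_{\w}g_1=d_1$ via one elementary map (legitimate since $d_1\in\Zn w_1+\cdots$ by Corollary~\ref{cor:weighted degree}/Theorem~\ref{thm:weighted degree} applied with $I=\{1,2\}$), then composes with an elementary map sending $x_2\mapsto x_2+(\text{monomial of }\w\text{-degree }d_2)$. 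One must verify each such $G$ lies in $\E_n^{\w}(\kappa)$, i.e.\ that the relevant initial forms are non-zero-divisors; since they are (scalar multiples of) monomials in the $x_i$, this holds over any $\kappa$. Finally $\mdegw G=(d_1,d_2,w_3,\ldots,w_n)=\mdegw F$, and since $\kappa$ was arbitrary, $\mdegw F\in|\E_n^{\w}|$ by definition of the latter as $\bigcap_{\kappa}\mdegw\E_n^{\w}(\kappa)$.

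The main obstacle I anticipate is the reduction in the first paragraph: making precise the passage from the $n$-variable $F$ to a genuine $2$-variable automorphism over $\mathrm{Frac}(k)$, i.e.\ checking that conditions (c) and (d) really do let one kill $f_3,\ldots,f_n$ and that condition (a) guarantees the $\w$-grading descends so that $\bar f_i^{\w}$ is the image of $f_i^{\w}$. The divisibility bookkeeping in (iii) — choosing the monomials of the correct $\w$-degree and confirming $G\in\E_n^{\w}(\kappa)$ uniformly in $\kappa$ — is routine once (i) and (ii) are in hand, relying only on the fact that monomials are non-zero-divisors in $\kappa[\x]$.
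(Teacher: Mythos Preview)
Your strategy for (i) and (ii) --- reduce to a two-variable automorphism and invoke Lemma~\ref{lem:jvdk} --- is exactly what the paper does. The mechanism differs: rather than a quotient, the paper first composes $F$ with the inverse of $H:=(x_1,x_2,f_3,\ldots,f_n)$ (legitimate by (c), (d) and Theorem~\ref{thm:base autom}~(iii)) to arrange $f_i=x_i$ for $i\geq 3$, and then views $(f_1,f_2)$ as an element of $\Aut_K K[x_1,x_2]$ with $K=k(x_3,\ldots,x_n)$, using the auxiliary weight $\w'=(w_1,w_2,0,\ldots,0)$ together with (a) to check that the $(w_1,w_2)$-initial forms over $K$ coincide with the original $\w$-initial forms. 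Your quotient $\kx/(f_3,\ldots,f_n)$ does not carry an induced grading in general (the ideal is not homogeneous), so the passage from $f_i^{\w}$ to $\bar f_i^{\w}$ really does require a preliminary step of this kind; you flagged this as the ``main obstacle,'' and it is, but it is resolvable along the paper's lines.

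Part (iii), however, has a genuine gap. Knowing only $d_1,d_2\in\Zn w_1+\Zn w_2$ (which follows immediately from hypothesis~(a), not from Theorem~\ref{thm:weighted degree} as you cite --- there the set $J$ need not contain $1$ or $2$) is not enough to build $G$. Your ``one elementary map'' producing $g_1$ would send some $x_l\mapsto x_l+m$ with $m$ a monomial in the remaining variables of $\w$-degree $d_1$; but if, say, $d_1=w_1+w_2$ with $w_1\notin\sum_{i\geq 2}\Zn w_i$ and $w_2\notin\sum_{i\neq 2}\Zn w_i$, no such $m$ exists. The paper's missing ingredient is that the \emph{smaller} of $d_1,d_2$ in fact lies in $\N w_r$ for a single $r\in\{1,2\}$: using Lemma~\ref{lem:approx} one finds $\vv$ making $(f_j^{\w})^{\vv}=\alpha x_1^{l_1}x_2^{l_2}$ a monomial, and then Theorem~\ref{thm:coords} (applied to the coordinate $f_j$ via the product $f_jx_3\cdots x_n$) forces $l_1=0$ or $l_2=0$, whence $\deg_{\w}f_j=l_rw_r$. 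Only then can $G$ be written down explicitly --- and even so a further case split on whether $\deg_{\w}f_j\geq w_s$ (with $\{r,s\}=\{1,2\}$) is needed, the case $\deg_{\w}f_j<w_s$ requiring a separate argument that $\deg_{\w}f_j=w_r$. Your sketch omits both the single-variable-support fact and this case analysis.
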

\begin{proof}
By replacing $k$ with the field of fractions of $k$, 
we may assume that $k$ is a field. 
We may also assume that $f_i=x_i$ for each $i\geq 3$ 
for the following reason. 
By (d), 
we can define an element of $\Aut _k\kx $ 
by $(x_1,x_2,f_3,\ldots ,f_n)$, 
whose multidegree is equal to $\w $ by (c). 
The inverse of this automorphism 
has the form $H=(x_1,x_2,h_3,\ldots ,h_n)$ 
for some $h_3,\ldots ,h_n\in \kx $, 
and satisfies 
$$
H\circ F=(H(f_1),H(f_2),x_3,\ldots ,x_n). 
$$ 
By Theorem~\ref{thm:base autom} (iii), 
we have $\w _H=\mdegw H=\w $. 
Hence, 
we know by 
Theorem~\ref{thm:base autom} (ii) 
and Corollary~\ref{cor:IP} (i) that 
$H(f)^{\w }=H^{\w }(f^{\w _H})=H^{\w }(f^{\w })$ 
for each $f\in \kx $. 
Since $H^{\w }=(x_1,x_2,h_3^{\w },\ldots ,h_n^{\w })$ 
fixes $x_1$ and $x_2$, 
we have 
$H^{\w }(f_i^{\w })=f_i^{\w }$ for $i=1,2$ by (a). 
Thus, 
$H(f_i)^{\w }=f_i^{\w }$ holds for $i=1,2$. 
Therefore, by replacing $F$ with $H\circ F$, 
we may assume that 
$f_i=x_i$ for each $i\geq 3$.

Set $\tilde{\w }=(w_1,w_2)$, 
$\w '=(w_1,w_2,0,\ldots ,0)$ 
and $K=k(x_3,\ldots ,x_n)$. 
Then, 
$\deg _{\tilde{\w}}f$ and $f^{\tilde{\w }}$ 
can be defined for each $f\in \kx $ 
as an element of $K[x_1,x_2]$. 
We note that 
$\deg _{\tilde{\w}}f=\deg _{\w '}f$ 
and $f^{\tilde{\w }}=f^{\w '}$ 
by definition. 
Since $w_i\geq 0$ for each $i$, 
we have $f_l^{\w }=f_l^{\w '}$ 
and $\degw f_l= \deg _{\w '}f_l$ 
for $l=1,2$ in view of (a). 
Hence, 
$f_l^{\tilde{\w }}=f_l^{\w }$ and 
$\deg _{\tilde{\w }}f_l=\degw f_l$ 
hold for $l=1,2$. 
Since $f_i=x_i$ for $i=3,\ldots ,n$ 
by assumption, 
we can define $\tilde{F}\in \Aut _KK[x_1,x_2]$ 
by $\tilde{F}=(f_1,f_2)$. 
Then, 
we have 
\begin{align*}
\deg _{\tilde{\w }}\tilde{F}
=\deg _{\tilde{\w }}f_1+\deg _{\tilde{\w }}f_2
=\degw f_1+\degw f_2>w_1+w_2=|\tilde{\w }|
\end{align*}
by (b). 
Thus, 
we obtain the following statements 
by Lemma~\ref{lem:jvdk}: 

\smallskip 

\nd (i$'$) 
$f_i^{\tilde{\w }}=c(f_j^{\tilde{\w }})^u$ 
for some $(i,j)\in \{ (1,2),(2,1)\} $, 
$c\in K^{\times }$ and $u\geq 1$.

\noindent{\rm (ii$'$)} 
$\deg _{\tilde{\w }}f_l>0$ for $l=1,2$.

\smallskip

Since $f_l^{\tilde{\w }}=f_l^{\w }$ for $l=1,2$, 
we know by (i$'$) that $f_i^{\w }=c(f_j^{\w })^u$. 
Hence, 
$c$ belongs to $k(x_1,x_2)$ by (a), 
and thus to $k(x_1,x_2)\cap K^{\times }=k^{\times }$. 
Therefore, 
we get (i). 
Similarly, 
(ii) follows from (ii$'$). 
We show (iii). 
By Lemma~\ref{lem:approx} (i) and (ii), 
there exist $\vv ,\w '\in \Gamma ^n$ 
such that $(f_j^{\w })^{\vv }$ is a monomial 
and is equal to $f_j^{\w '}$. 
Because of (a), 
we may write 
$(f_j^{\w })^{\vv }=\alpha x_1^{l_1}x_2^{l_2}$, 
where $\alpha \in k^{\times }$ 
and $l_1,l_2\in \Zn $. 
Since $(l_1,l_2)\cdot \w =\degw f_j>0$ by (ii), 
we have $(l_1,l_2)\neq (0,0)$. 
We claim that $l_1=0$ or $l_2=0$. 
In fact, 
if not, 
$(f_jx_3\cdots x_n)^{\w '}
=\alpha x_1^{l_1}x_2^{l_2}x_3\cdots x_n$ 
is divisible by $x_1,\ldots ,x_n$, 
contradicting Theorem~\ref{thm:coords}. 
Let $r,s\in \{ 1,2\} $ be such that 
$l_r\geq 1$ and $l_s=0$. 
Then, 
we have $\degw f_j=l_rw_r$, 
and so 
$$
\degw f_i=u\degw f_j=ul_rw_r\geq w_r
$$ 
by (i). 
First, 
assume that $\degw f_j\geq w_s$. 
Then, 
we have $\degw f_j=\degw (x_s+x_r^{l_r})$ 
and $\degw f_i=\degw (x_s+x_r^{l_r})^u$. 
When $\degw f_i>w_r$, 
we define $G\in \E _n(\kappa )$ by 
$$
g_i=x_r+(x_s+x_r^{l_r})^u,\quad 
g_j=x_s+x_r^{l_r}
$$ 
and $g_l=x_l$ for $l=3,\ldots ,n$. 
Then, 
$G$ belongs to $\E _n^{\w }(\kappa )$ 
and satisfies $G\sim _{\w }F$. 
If $\degw f_i=w_r$, 
then $G\sim _{\w }F$ holds for 
$G\in \E _n^{\w }(\kappa )$ 
defined  by $g_i=x_r$, 
$g_j=x_s+x_r^{l_r}$ 
and $g_l=x_l$ for $l=3,\ldots ,n$. 
Next, 
assume that $\degw f_j<w_s$. 
Then, 
$f_j$ belongs to $K[x_r]$. 
Since $f_j$ is a coordinate of $K[x_1,x_2]$ over $K$, 
this implies that $\deg _{x_r}f_j=1$. 
Since $f_j^{\w }$ belongs to $K[x_r]\cap k[x_1,x_2]=k[x_r]$, 
we get $\degw f_j=\degw f_j^{\w }=w_r$, 
and so $\degw f_i=uw_r$. 
In view of (b), 
we have $\degw f_i>w_s$. 
Hence, 
$G\sim _{\w }F$ holds for 
$G\in \E _n^{\w }(\kappa )$ 
defined  by 
$g_i=x_s+x_r^u$, 
$g_j=x_r$ and $g_l=x_l$ for $l=3,\ldots ,n$. 
\end{proof}

In the case of $n=2$, 
the conditions (a), (c) and (d) are obvious. 
Hence, 
if $\degw F>|\w |$ 
for $F\in \Aut _k\kx $ 
and $\w \in (\Gamma _{\geq 0})^2$, 
then $\degw F$ belongs to $|\E _2^{\w }|$ 
by Theorem~\ref{thm:jvdk} (iii). 
The same holds when $\degw F=|\w |$ 
as remarked before Theorem~\ref{thm:base autom}. 
Therefore, 
$\mdegw (\Aut _k\kx )$ is contained in $|\E _2^{\w }|$. 
Since $|\E _2^{\w }|$ is contained in 
the subset $\mdegw \E _2(k)$ of $\mdegw (\Aut _k\kx )$, 
we get $\mdegw (\Aut _k\kx )=|\E _2^{\w }|$.

\begin{cor}\label{cor:jvdk}
Assume that $n=3$ and $k$ is a domain. 
Then, 
the following assertions hold 
for each $F\in \Aut _k\kx $$:$

\noindent{\rm (i)} 
If $f_{i_1}$ and $f_{i_2}$ 
belong to $k[x_{j_1},x_{j_2}]$ for some $1\leq i_1<i_2\leq 3$ 
and $1\leq j_1<j_2\leq 3$, 
then $\mdegw F$ belongs to $|\E _3^{\w }|$ 
for any $\w \in (\Gamma _{\geq 0})^3$.

\noindent{\rm (ii)} 
Assume that $f_j$ belongs to $k[x_i]$ 
for some $i,j\in \{ 1,2,3\} $. 
Then, 
for any commutative ring $\kappa $ 
and $\w \in (\Gamma _{\geq 0})^3$ with $w_i=0$, 
there exists $G\in \E _3^{\w }(\kappa )$ 
such that $g_j=x_i$ and $G\sim _{\w }F$. 
\end{cor}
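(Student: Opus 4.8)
The plan is to deduce both parts from the two‑variable identity $\mdegw(\Aut_k k[x_1,x_2])=|\E_2^\w|$ obtained at the end of Section~\ref{sect:vdk}, rather than from Theorem~\ref{thm:jvdk} directly: in the situation of (i) the hypothesis $\degw f_3=w_3$ of that theorem may fail, so I will instead use an explicit factorization of $F$ together with a hand‑built realizer in $\E_3^\w(\kappa)$.

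For (i), I would first replace $k$ by its field of fractions (this changes neither $\mdegw F$ nor $|\E_3^\w|$) and then permute the variables and the components of $F$ — a variable permutation lies in $\E_3(\kappa)$ for every $\kappa$ and transforms $\mdegw$ and $|\E_3^\w|$ compatibly — so as to reduce to the case $f_1,f_2\in k[x_1,x_2]$ with $\w\in(\Gamma_{\geq0})^3$. A routine argument on the $x_3$-degree shows that $F_0:=(f_1,f_2)$ is an automorphism of $k[x_1,x_2]$ and that $f_3=\alpha x_3+p$ for some $\alpha\in k^\times$ and $p\in k[x_1,x_2]$; consequently $F=(x_1,x_2,\alpha x_3+p)\circ(f_1,f_2,x_3)$ and $\mdegw F=(\degw f_1,\degw f_2,\max\{w_3,\degw p\})$. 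Since $(\degw f_1,\degw f_2)=\mdeg_{(w_1,w_2)}F_0$ lies in $|\E_2^{(w_1,w_2)}|$, for a given commutative ring $\kappa$ I can pick $(\psi_1,\psi_2)\in\E_2^{(w_1,w_2)}(\kappa)$ with $\degw\psi_l=\degw f_l$ for $l=1,2$; choosing a monomial $\mu=x_1^ax_2^b\in\kappa[x_1,x_2]$ occurring in $p$ with $aw_1+bw_2=\degw p$ (and $\mu=0$ if $p=0$), I set $G:=(\psi_1,\psi_2,x_3+\mu)$. Then $G=(x_1,x_2,x_3+\mu)\circ(\psi_1,\psi_2,x_3)\in\E_3(\kappa)$, the initial forms $\psi_1^\w$, $\psi_2^\w$ and $(x_3+\mu)^\w$ are nonzero divisors of $\kappa[\x]$ because $w_3\geq0$, and $\mdegw G=\mdegw F$; hence $\mdegw F\in|\E_3^\w|$. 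Note that no case distinction according to whether $\degw F=|\w|$ is needed here.

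For (ii), I would again pass to the field of fractions. Since $f_j$ is a coordinate contained in $k[x_i]$, considering the quotients $k[\x]/(f_j-c)$ forces $f_j=\alpha x_i+\beta$ with $\alpha\in k^\times$ and $\beta\in k$; pre‑composing $F$ with the elementary automorphism sending $x_i$ to $\alpha^{-1}(x_i-\beta)$ and fixing the other two variables — which is $\w$-graded since $w_i=0$, hence preserves $\mdegw$ — I may assume $f_j=x_i$ outright. Then the $F$-images of the two variables other than $x_j$ are algebraically independent over $R:=k[x_i]$ and generate $k[\x]$ over $R$, so they constitute an automorphism $\Phi$ of the two‑variable polynomial ring over the domain $R$, and, because $w_i=0$, the $\w$-degrees of those images equal their degrees for the induced weight $\vv$ on the two remaining variables. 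By the two‑variable identity over $R$, $\mdeg_\vv\Phi$ lies in $|\E_2^\vv|$, so for any $\kappa$ there is a realizer in $\E_2^\vv(\kappa)$; extending it to $\E_3(\kappa)$ by fixing $x_i$, and, when $i\neq j$, composing on the right with the transposition of $x_i$ and $x_j$, I obtain $G\in\E_3^\w(\kappa)$ with $g_j=x_i$ and $G\sim_\w F$, as required.

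The step I expect to be the main obstacle is the sub‑case of (i) in which $\degw p>w_3$: here $f_1^\w$, $f_2^\w$ and $f_3^\w$ all lie in $k[x_1,x_2]$, so they are algebraically dependent and Theorem~\ref{thm:jvdk} cannot be invoked; one must instead work through the explicit factorization $F=(x_1,x_2,\alpha x_3+p)\circ(f_1,f_2,x_3)$ and verify directly that the third initial form of the realizer — namely $\mu$ when $\degw\mu>w_3$ and $x_3+\mu$ when $\degw\mu=w_3$ — is a nonzero divisor of $\kappa[\x]$. The rest is bookkeeping: tracking the variable and component permutations in (i), and the transposition that moves $x_i$ into the $j$-th slot in (ii).
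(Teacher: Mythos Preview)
Your proposal is correct and follows essentially the same route as the paper: in (i) you reduce to $f_1,f_2\in k[x_1,x_2]$, write $f_3=\alpha x_3+p$, realize $(\degw f_1,\degw f_2)$ via the two-variable identity $\mdeg_\vv(\Aut_k k[x_1,x_2])=|\E_2^\vv|$, and append $x_3+x_1^{l_1}x_2^{l_2}$ with $l_1w_1+l_2w_2=\degw p$; in (ii) you regard the remaining two components as an automorphism over $k[x_i]$ and again invoke the two-variable identity. The only differences are cosmetic: the paper reduces (ii) to $i=j=3$ by a single permutation rather than normalizing $f_j$ to $x_i$ and composing with a transposition, and it does not dwell on the case $\degw p>w_3$ since the monomial construction handles all cases uniformly.
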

\begin{proof}
(i) 
We may assume that 
$(i_1,i_2)=(j_1,j_2)=(1,2)$. 
Then, 
we have 
$k[f_1,f_2]=k[x_1,x_2]$ 
and $f_3=\alpha x_3+p$ 
for some $\alpha \in k^{\times }$ and $p\in k[x_1,x_2]$. 
Set $\vv =(w_1,w_2)$ 
and take any commutative ring $\kappa $. 
Then, 
there exists $(g_1,g_2)\in \E _2^{\vv }(\kappa )$ 
such that $(g_1,g_2)\sim _{\vv }(f_1,f_2)$ 
by the discussion above. 
Define $q\in \kappa [x_1,x_2]$ by 
$q=0$ if $p=0$, 
and $q=x_1^{l_1}x_2^{l_2}$ if $p\neq 0$, 
where $l_1,l_2\in \Zn $ are such that 
$\degw p=l_1w_1+l_2w_2$. 
Then, 
$G:=(g_1,g_2,x_3+q)$ is an element of $\E _3^{\w }(\kappa )$ 
such that $G\sim _{\w }F$. 
Therefore, 
$\mdegw F$ belongs to $|\E _3^{\w }|$.

(ii) We may assume that $i=j=3$. 
Set $\vv =(w_1,w_2)$. 
Then, 
$\degw f_l$ is equal to the $\vv $-degree 
of $f_l$ as a polynomial in $x_1$ and $x_2$ 
over $k':=k[x_3]$ for each $l$. 
Since $f_3$ belongs to $k'$ by assumption, 
we have $k'[f_1,f_2]=k'[x_1,x_2]$. 
Hence, 
there exists $(g_1,g_2)\in \E _2^{\vv }(\kappa )$ 
such that $(g_1,g_2)\sim _{\vv }(f_1,f_2)$ 
by the discussion above. 
Then, 
$G=(g_1,g_2,x_3)$ is an element of $\E _3^{\w }(\kappa )$ 
such that $G\sim _{\w }F$. 
\end{proof}

\section{Proofs of Theorems~\ref{thm:sc} 
and \ref{thm:tame:stb coord}. }
\label{sect:sc}
\setcounter{equation}{0}

The goal of this section is to prove 
Theorems~\ref{thm:sc} and \ref{thm:tame:stb coord}. 
For this purpose, 
we use the following theorem 
which is implicit in 
Asanuma \cite{Asanuma} 
(cf.~\cite[Section 3]{stbinv}).

\begin{thm}\label{thm:Yu}
If $k$ is an integrally closed domain, 
then every stable coordinate of $k[x_1,x_2]$ over $k$ 
is a coordinate of $k[x_1,x_2]$ over $k$. 
\end{thm}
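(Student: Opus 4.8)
The plan is to realize $k$ as a direct limit of its finitely generated $\Z$-subalgebras (or, if $k$ has positive characteristic, of its finitely generated $\mathbf{F}_p$-subalgebras) and reduce the statement to the Noetherian case, where the result of Asanuma applies. Concretely, suppose $f$ is a stable coordinate of $k[x_1,x_2]$ over $k$, so that $f$ is a coordinate of $k[x_1,\dots,x_m]$ over $k$ for some $m\geq 2$; fix $G=(f,g_2,\dots,g_m)\in\Aut_k k[x_1,\dots,x_m]$ together with its inverse $H$. All the coefficients occurring in the finitely many polynomials $f,g_2,\dots,g_m$ and in the components of $H$ lie in a finitely generated subring $R_0$ of $k$. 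Enlarging $R_0$ to a finitely generated subring $R$ of $k$ that is integrally closed in its field of fractions is the delicate point: one wants $R$ to be both Noetherian (hence normal excellent) and integrally closed, while still a subring of $k$. Since $k$ is integrally closed in its fraction field $\mathrm{Frac}(k)$, the integral closure of $\mathrm{Frac}(R_0)\cap k$-type subrings behaves well, but one should be careful that the normalization of a finitely generated domain is again finitely generated (true by Noether, as $\mathrm{Frac}(R_0)$ is finitely generated over the prime field); so we may take $R$ to be the integral closure of $R_0$ inside $\mathrm{Frac}(R_0)$, which lands inside $k$ precisely because $k$ is integrally closed.

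Once $R$ is chosen, the identities $G\circ H=H\circ G=\mathrm{id}$ hold already over $R$ (they are polynomial identities with coefficients in $R$), so $G$ restricts to an element of $\Aut_R R[x_1,\dots,x_m]$ and $f\in R[x_1,x_2]$ is a stable coordinate of $R[x_1,x_2]$ over $R$. Now $R$ is a Noetherian integrally closed domain, so by the result implicit in Asanuma~\cite{Asanuma} (see also \cite[Section 3]{stbinv}), $f$ is a coordinate of $R[x_1,x_2]$ over $R$; that is, there is $E=(f,h)\in\Aut_R R[x_1,x_2]$ with inverse $E'$. Extending scalars from $R$ to $k$ gives $E\in\Aut_k k[x_1,x_2]$ with the same inverse, so $f$ is a coordinate of $k[x_1,x_2]$ over $k$, as desired.

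The main obstacle I anticipate is the descent-of-normality step: arranging a finitely generated subring $R$ of $k$ that simultaneously (a) contains all the data defining the stable-coordinate structure of $f$, and (b) is integrally closed, so that Asanuma's theorem is applicable. The key facts that make this go through are that a finitely generated domain over a field or over $\Z$ has finite normalization, and that this normalization embeds into $k$ because $k$ is assumed integrally closed in its fraction field. The rest of the argument --- spreading out the automorphism and its inverse, and base-changing back up to $k$ --- is formal, since being a pair of mutually inverse polynomial endomorphisms is a condition on finitely many polynomial coefficients and is preserved under any ring extension.
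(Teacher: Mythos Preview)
The paper does not actually give a proof of Theorem~\ref{thm:Yu}; it simply records the statement as a known result ``implicit in Asanuma~\cite{Asanuma} (cf.~\cite[Section~3]{stbinv})'' and moves on to use it. So there is no argument in the paper against which to compare your proposal in detail.

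That said, your reduction argument is sound. Spreading out the automorphism $G$ and its inverse to a finitely generated subring $R_0\subset k$ is standard, and the key step---replacing $R_0$ by its normalization $R$ inside $\mathrm{Frac}(R_0)$---works exactly as you describe: $R$ is module-finite over $R_0$ (finitely generated domains over $\Z$ or $\mathbf{F}_p$ are excellent, hence Japanese), so $R$ is again finitely generated and Noetherian; and $R\subset k$ because every element of $R$ is integral over $R_0\subset k$ and lies in $\mathrm{Frac}(k)$, while $k$ is integrally closed. The remaining steps (applying the Noetherian case over $R$, then base-changing back to $k$) are indeed formal.

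One comment: you are assuming that Asanuma's result is stated only for \emph{Noetherian} integrally closed domains and then supplying the reduction to that case yourself. The paper, by contrast, attributes the full statement (for an arbitrary integrally closed domain $k$) directly to Asanuma. So either your reduction is unnecessary---Asanuma's argument already covers the general case---or you are filling in a genuine gap between what is literally in \cite{Asanuma} and what the present paper asserts. It would be worth checking \cite{Asanuma} and \cite[Section~3]{stbinv} to see which of these is the situation; in either case your argument is a clean and correct way to pass from the Noetherian case to the general one.
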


We mention that Shpilrain-Yu~\cite{Comb} 
showed Theorem~\ref{thm:Yu} when $k$ 
is a field of characteristic zero 
in a different manner.

We use the following proposition 
to prove Theorems~\ref{thm:sc}, 
\ref{thm:tame:stb coord} and \ref{thm:ER} (i).

\begin{prop}\label{prop:sc}
Assume that $n=3$ and $k$ is a domain. 
Let $F\in \Aut _k\kx $ be such that 
$f_1$ belongs to $k[x_1,x_2]$, 
and $f_3=ax_3+p$ for some $a\in k\sm \zs $ 
and $p\in k[x_1,x_2]$.

\noindent{\rm (i)} 
If $k$ is a field, 
then $F$ belongs to $\T _3(k)$.

\noindent{\rm (ii)} 
Let $\w \in (\Gamma _{\geq 0})^3$ be such that $\degw p\leq w_3$. 
Then, 
for any commutative ring $\kappa $, 
there exists $G\in \E _3^{\w }(\kappa )$ 
such that $g_3=x_3$ and $G\sim _{\w }F$. 
In particular, 
$\mdegw F$ belongs to $|\E _3^{\w }|$. 
\end{prop}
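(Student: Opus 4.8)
The plan is to reduce statement (i) of Proposition~\ref{prop:sc} to a two-variable question and then invoke van der Kulk via Theorem~\ref{thm:Yu}, and to deduce (ii) by the same kind of degree bookkeeping used in the proof of Theorem~\ref{thm:jvdk}. For (i), assume $k$ is a field. Since $f_3=ax_3+p$ with $p\in k[x_1,x_2]$, composing $F$ on the left with the tame automorphism $(x_1,x_2,a^{-1}(x_3-p))$ replaces $f_3$ by $x_3$ without affecting $f_1,f_2$; so I may assume $f_3=x_3$. Then $F$ restricts to an automorphism $\tilde F=(f_1,f_2)$ of $K[x_1,x_2]$ where $K=k(x_3)$, and in fact, since $f_1\in k[x_1,x_2]$ and $(f_1,f_2,x_3)$ is an automorphism of $\kx$, the pair $(f_1,f_2)$ is an automorphism of $k[x_1,x_2,x_3]$ modulo $x_3$ — more precisely $k[x_1,x_2,f_1,f_2]=k[x_1,x_2,x_3]$ forces $k[x_3][f_1,f_2]=k[x_3][x_1,x_2]$, i.e. $\tilde F\in\Aut_{k[x_3]}k[x_3][x_1,x_2]$ and hence $f_1$ is a coordinate of $k[x_3][x_1,x_2]$ over $k[x_3]$ with $f_1\in k[x_1,x_2]$; so $f_1$ is a stable coordinate of $k[x_1,x_2]$ over $k$. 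By Theorem~\ref{thm:Yu} (applied over the field $k$, which is integrally closed), $f_1$ is an ordinary coordinate of $k[x_1,x_2]$ over $k$, so there is $G_0=(f_1,h)\in\Aut_kk[x_1,x_2]\subset\T_2(k)$. Composing $F$ with $(G_0^{-1},x_3)$ reduces to the case $f_1=x_1$, where $(f_1,f_2,f_3)=(x_1,f_2,x_3)$ is an elementary-type automorphism fixing $x_1$ and $x_3$; such an automorphism is elementary, hence tame. Unwinding the compositions shows $F\in\T_3(k)$.

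For (ii), I would mimic the argument of Theorem~\ref{thm:jvdk}(iii) closely. Replace $k$ by its field of fractions (this does not change $\w$-degrees of the $f_i$ nor the conclusion, which only asserts the existence of a suitable $G$ over an arbitrary $\kappa$). As in (i), since $\degw p\le w_3$ we have $f_3^{\w}=ax_3$ or $f_3^{\w}=ax_3+p^{\w}$; composing with the elementary automorphism $x_3\mapsto a^{-1}(x_3-p)$ is $\w$-homogeneity-compatible and lets me assume $f_3=x_3$ while preserving $\mdegw F$ — here I must check, exactly as in Theorem~\ref{thm:jvdk}, that this left composition does not change $\degw f_1,\degw f_2$ and does not change $f_1^{\w},f_2^{\w}$; this is where $\degw p\le w_3$ is used. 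Now $\tilde F=(f_1,f_2)\in\Aut_KK[x_1,x_2]$ with $K=k(x_3)$, and $f_1^{\tilde\w}=f_1^{\w}$, $f_2^{\tilde\w}=f_2^{\w}$ for $\tilde\w=(w_1,w_2)$ because $w_i\ge0$ and $f_1\in k[x_1,x_2]$; moreover $f_1$ is a coordinate of $K[x_1,x_2]$ over $K$. Applying the $n=2$ case of the theory developed after Theorem~\ref{thm:jvdk} (namely $\mdeg_{\tilde\w}(\Aut_KK[x_1,x_2])=|\E_2^{\tilde\w}|$, via Lemma~\ref{lem:jvdk} over the field $K$), I get $(g_1,g_2)\in\E_2^{\tilde\w}(\kappa)$ with $(g_1,g_2)\sim_{\tilde\w}(f_1,f_2)$, and then $G:=(g_1,g_2,x_3)\in\E_3^{\w}(\kappa)$ satisfies $G\sim_{\w}F$. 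This proves the first sentence of (ii), and since $\w_{\sigma}\in|\E_3^{\w}|$ and elementary automorphisms exist over every commutative ring, $\mdegw F=\mdegw G\in|\E_3^{\w}|$ follows.

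The main obstacle is the reduction to $f_1=x_1$ in part (i): I need that $f_1\in k[x_1,x_2]$ together with $(f_1,f_2,x_3)\in\Aut_k\kx$ actually forces $f_1$ to be a stable coordinate of $k[x_1,x_2]$ over $k$, so that Theorem~\ref{thm:Yu} applies. The point is that $k[x_3]$-automorphisms of $k[x_3][x_1,x_2]$ restrict $f_1$ to a coordinate over $k[x_3]$, and since $f_1$ itself lies in $k[x_1,x_2]$ this exhibits $f_1$ as a coordinate of $k[x_1,x_2][x_3]$ over $k[x_1,x_2]\cap$-nothing — rather, as a coordinate of the polynomial ring $(k[x_1,x_2])[x_3]$ that happens to lie in $k[x_1,x_2]$, which is precisely the definition of a stable coordinate of $k[x_1,x_2]$ over $k$ with $m=3$. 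Once this identification is made, Theorem~\ref{thm:Yu} does the real work. A secondary (routine) technical point, shared with the proof of Theorem~\ref{thm:jvdk}, is verifying that the left composition by $(x_1,x_2,a^{-1}(x_3-p))$ in part (ii) genuinely preserves $f_1^{\w}$, $f_2^{\w}$, and $\mdegw F$; this uses $\degw p\le w_3=\degw f_3$ and the product rule (\ref{eq:prod deg}), together with Proposition~\ref{prop:IP}, exactly as before.
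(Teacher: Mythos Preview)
Your argument for (i) is essentially the paper's argument, and it is correct. One small slip: the left composition by $(x_1,x_2,a^{-1}(x_3-p))$ does \emph{not} leave $f_2$ unchanged; it replaces $f_2$ by $f_2(x_1,x_2,a^{-1}(x_3-p))$. This is harmless for (i), since all you need afterwards is that $f_1\in k[x_1,x_2]$ and $f_3=x_3$. (Incidentally, the detour through $k[x_3]$-automorphisms is unnecessary: $f_1$ is already a coordinate of $k[x_1,x_2,x_3]$ over $k$ simply because it is a component of $F$, hence a stable coordinate of $k[x_1,x_2]$ over $k$, and Theorem~\ref{thm:Yu} applies directly.)

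Your argument for (ii), however, has a genuine gap. After reducing to $f_3=x_3$ you pass to $\tilde F=(f_1,f_2)\in\Aut_KK[x_1,x_2]$ with $K=k(x_3)$ and $\tilde\w=(w_1,w_2)$, and you claim $\deg_{\tilde\w}f_2=\degw f_2$. But $\deg_{\tilde\w}f_2$ computed over $K$ is $\deg_{(w_1,w_2,0)}f_2$, and this is strictly smaller than $\degw f_2$ whenever $w_3>0$ and $f_2^{\w}$ involves $x_3$. In that situation your $(g_1,g_2)\in\E_2^{\tilde\w}(\kappa)$ satisfies $\degw g_2=\deg_{\tilde\w}g_2=\deg_{\tilde\w}f_2<\degw f_2$, so $G=(g_1,g_2,x_3)$ does \emph{not} satisfy $G\sim_{\w}F$. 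The reduction you perform (left composition by $\psi^{-1}$) does preserve $\degw f_2$ and $f_2^{\w}$, as you say, but it does not force $f_2^{\w}$ into $k[x_1,x_2]$; that is exactly the obstruction.

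The paper handles precisely this case separately. When $f_2^{\w}\in k[x_1,x_2]$ it appeals to Theorem~\ref{thm:jvdk}~(iii). When $f_2^{\w}\notin k[x_1,x_2]$ it uses the structure of $F$ more carefully: writing $f_2=h_1+h_2$ with $h_1\in k[x_1,x_2]$ and $h_2\in k[f_1,f_3]$ (coming from the explicit form $(x_1,bx_2+q(x_1,x_3),x_3)$ of $\phi^{-1}\circ\psi^{-1}\circ F$), one gets $k[f_1,h_1]=k[x_1,x_2]$, applies the two-variable result to $(f_1,h_1)$, and then, since $\degw f_2=\degw h_2\in\Zn\degw f_1+\Zn w_3$, corrects $g_2$ by a monomial $g_1^{l_1}x_3^{l_3}$ to recover the missing degree. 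Your approach recovers only the ``$f_2^{\w}\in k[x_1,x_2]$'' half of this.
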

\begin{proof}
By replacing $k$ with the field of fractions of $k$, 
we may assume that $k$ is a field. 
Then, 
we can define $\psi \in \T _3(k)$ 
by $\psi (x_i)=x_i$ for $i=1,2$ and $\psi (x_3)=f_3$. 
Since $f_1$ belongs to $k[x_1,x_2]$ by assumption, 
there exists $\phi \in \Aut _{k}k[x_1,x_2]$ 
such that $\phi (x_1)=f_1$ 
by Theorem~\ref{thm:Yu}. 
By Jung~\cite{Jung} and van der Kulk~\cite{Kulk}, 
we have $\Aut _{k}k[x_1,x_2]=\T _2(k)$. 
Hence, 
we can extend $\phi $ to an element of $\T _3(k)$ 
by setting $\phi (x_3)=x_3$. 
Then, 
we have $\psi (\phi (x_i))=f_i$ for $i=1,3$, 
and so 
\begin{equation}\label{eq:pf:propsc}
\phi ^{-1}\circ \psi ^{-1}\circ F
=(x_1,(\phi ^{-1}\circ \psi ^{-1})(f_2),x_3). 
\end{equation}
Since $\phi $ and $\psi $ are elements of $\T _3(k)$, 
it follows that $F$ belongs to $\T _3(k)$. 
This proves (i).

Next, 
we show (ii). 
Since $a\neq 0$ and $\degw p\leq w_3$, 
we know that $\degw f_3=w_3$, 
and $f_3^{\w }$ depends on $x_3$. 
If $\degw F=|\w |$, 
then we have $\mdegw F=\w _{\sigma }$ 
for some $\sigma \in \mathfrak{S}_3$ 
by Theorem~\ref{thm:base autom} (ii). 
Since $w_3=\degw f_3=w_{\sigma (3)}$, 
we may assume that $\sigma (3)=3$. 
Then, 
$G=(x_{\sigma (1)},x_{\sigma (2)},x_3)$ 
satisfies the required conditions. 
Assume that $\degw F>|\w |$. 
Then, we have $\degw f_1+\degw f_2>w_1+w_2$. 
If $f_2^{\w }$ belongs to $k[x_1,x_2]$, 
then the conditions (a) through (d) before 
Theorem~\ref{thm:jvdk} are fulfilled. 
In this case, 
the assertion follows from 
Theorem~\ref{thm:jvdk} (iii). 
Hence, 
we may assume that 
$f_2^{\w }$ does not belong to $k[x_1,x_2]$. 
By (\ref{eq:pf:propsc}), 
we have 
$$
(\phi ^{-1}\circ \psi ^{-1})(f_2)=bx_2+q(x_1,x_3) 
$$ 
for some $b\in k^{\times }$ and 
$q(x_1,x_3)\in k[x_1,x_3]$. 
Write 
$$
q(x_1,x_3)=q_1(x_1)+x_3q_2(x_1,x_3), 
$$ 
where $q_1(x_1)\in k[x_1]$ 
and $q_2(x_1,x_3)\in k[x_1,x_3]$. 
Set 
$$
h_1=b\phi (x_2)+q_1(f_1),\quad h_2=f_3q_2(f_1,f_3). 
$$
Then, 
$h_1$ belongs to $k[x_1,x_2]$, 
$h_2$ belongs to $k[f_1,f_3]$, 
and 
$$
f_2=(\psi \circ \phi )\bigl(bx_2+q(x_1,x_3)\bigr)
=b\phi (x_2)+q(f_1,f_3)=h_1+h_2. 
$$
Since $k[f_1,f_2,f_3]=k[f_1,h_1,f_3]$, 
and $f_1$ and $h_1$ belong to $k[x_1,x_2]$, 
we know that $k[f_1,h_1]=k[x_1,x_2]$. 
By the remark before Corollary~\ref{cor:jvdk}, 
there exists $(g_1,g_2)\in \E _2^{\vv }(\kappa )$ 
such that $(g_1,g_2)\sim _{\vv }(f_1,h_1)$, 
where $\vv :=(w_1,w_2)$. 
If $\degw h_1=\degw f_2$, 
then $G\sim _{\w }F$ holds for 
$G:=(g_1,g_2,x_3)\in \E _3^{\w }(\kappa )$. 
Assume that $\degw h_1\neq \degw f_2$. 
Then, 
we have $h_2\neq 0$. 
Hence, 
$h_2^{\w }=f_3^{\w }q_2(f_1,f_3)^{\w }$ 
depends on $x_3$. 
Since $f_2^{\w }$ does not belong to $k[x_1,x_2]$ 
by assumption, 
and $h_1$ is an element of $k[x_1,x_2]$ 
with $\degw h_1\neq \degw f_2$, 
it follows that $\degw f_2=\degw h_2$ 
and $\degw f_2>\degw h_1=\degw g_2$. 
We claim that 
$\degw h_2$ belongs to 
$\Zn \degw f_1+\Zn \degw f_3$. 
In fact, 
since $f_1^{\w }\in k[x_1,x_2]\sm k$ 
and $f_3^{\w }\in \kx \sm k[x_1,x_2]$ 
are algebraically independent over $k$, 
we have $k[f_1,f_3]^{\w }=k[f_1^{\w },f_3^{\w }]$ 
by Corollary~\ref{cor:IP} (iii). 
Hence, 
we may write 
$$
\degw f_2=\degw h_2
=l_1\degw f_1+l_3\degw f_3=l_1\degw f_1+l_3w_3, 
$$ 
where $l_1,l_3\in \Zn $. 
Define $G\in \E _3(\kappa )$ 
by $G=(g_1,g_2+g_1^{l_1}x_3^{l_3},x_3)$. 
Then, 
$G$ belongs to $\E _3^{\w }(\kappa )$ 
and satisfies $G\sim _{\w }F$, 
since $\degw f_2>\degw g_2$. 
This proves (ii). 
\end{proof}

We note that Proposition~\ref{prop:sc} 
can be proved without using Theorem~\ref{thm:Yu}, 
since we can directly verify that $f_1$ is a coordinate of 
$k[x_1,x_2]$ over $k$ as follows. 
Write $F^{-1}\circ \psi =(g_1,g_2,g_3)$. 
Then, 
we have 
$$
x_i=\psi ^{-1}(F(g_i))
=g_i\bigl(
\psi ^{-1}(f_1),\psi ^{-1}(f_2),\psi ^{-1}(f_3)\bigr)
=g_i(f_1,\psi ^{-1}(f_2),x_3)
$$ 
for $i=1,2,3$. 
Let $f_2'$ be the element of $k[x_1,x_2]$ 
obtained from $\psi ^{-1}(f_2)$ 
by the substitution $x_3\mapsto 0$. 
Then, 
we have $x_i=g_i(f_1,f_2',0)$ for $i=1,2$. 
Hence, 
we get $k[f_1,f_2']=k[x_1,x_2]$.

Now, 
let us prove Theorems~\ref{thm:sc} 
and \ref{thm:tame:stb coord}. 
First, 
we show Theorem~\ref{thm:sc} 
and the case (1) of Theorem~\ref{thm:tame:stb coord}. 
By replacing $k$ with the field of fractions of $k$, 
we may assume that $k$ is a field. 
Take any $F\in \Aut _k\kx $ such that 
at least two of $\degw f_i$'s 
are not greater than $\max \{ w_1,w_2,w_3\} $. 
We show that $F$ belongs to $\T _3(k)$ 
and $\mdegw F$ belongs to $|\E _3^{\w }|$. 
By changing the indices of 
$f_i$'s, $w_i$'s and $x_i$'s if necessary, 
we may assume that $F$ and $\w $ satisfy (\ref{eq:ascend}). 
Then, 
$\degw f_i\leq w_3$ holds for $i=1,2$. 
When $f_1$ and $f_2$ belong to $k[x_1,x_2]$, 
we have $k[f_1,f_2]=k[x_1,x_2]$. 
Since $\Aut _kk[x_1,x_2]=\T _2(k)$, 
this implies that $F$ belongs to $\T _3(k)$. 
Moreover, 
$\mdegw F$ belongs to $|\E _3^{\w }|$ 
by Corollary~\ref{cor:jvdk} (i). 
Thus, 
we may assume that $f_1$ or $f_2$ 
does not belong to $k[x_1,x_2]$. 
If $\degw f_1<w_3$, 
then $f_1$ belongs to $k[x_1,x_2]$. 
Hence, 
$f_2$ does not belong to $k[x_1,x_2]$. 
Since $\degw f_2\leq w_3$, 
we may write $f_2=ax_3+p$, 
where $a\in k^{\times }$, 
and $p\in k[x_1,x_2]$ is such that $\degw p\leq w_3$. 
Thus, 
the assertion follows from 
Proposition~\ref{prop:sc} (i) and (ii). 
The same holds when $\degw f_2<w_3$. 
So assume that $\degw f_i=w_3$ for $i=1,2$. 
Then, 
we may write 
$f_i=a_ix_3+p_i$ for $i=1,2$, 
where $a_i\in k$, 
and $p_i\in k[x_1,x_2]$ 
is such that $\degw p_i\leq w_3$. 
Since $f_1$ or $f_2$ 
does not belong to $k[x_1,x_2]$, 
we may assume that $a_2\neq 0$. 
Then, 
$f':=f_1-a_1a_2^{-1}f_2=p_1-a_1a_2^{-1}p_2$ 
belongs to $k[x_1,x_2]$. 
Hence, 
$F':=(f',f_3,f_2)$ belongs to $\T _3(k)$ 
by Proposition~\ref{prop:sc} (i), 
and thus so does $F$. 
Take any commutative ring $\kappa $. 
Then, 
there exists 
$(g_1,g_2,x_3)\in \E _3^{\w }(\kappa )$ 
such that $(g_1,g_2,x_3)\sim _{\w }F'$ 
by Proposition~\ref{prop:sc} (ii). 
By the choice of $p_1$ and $p_2$, 
we have $\degw g_1=\degw f'\leq w_3$. 
Define $G\in \E _3(\kappa )$ by 
$G=(g_1+x_3,x_3,g_2)$ if $\degw g_1<w_3$, 
and by $G=(g_1,x_3,g_2)$ if $\degw g_1=w_3$. 
Then, 
$G$ is an element of $\E _3^{\w }(\kappa )$ 
such that $G\sim _{\w }F$. 
Therefore, 
$\degw F$ belongs to $|\E _3^{\w }|$. 
This completes the proof of Theorem~\ref{thm:sc} 
and the case (1) of Theorem~\ref{thm:tame:stb coord}.

Next, 
we prove the case (2) 
of Theorem~\ref{thm:tame:stb coord}. 
Without loss of generality, 
we may assume that $w_1\leq w_2\leq w_3$ as before. 
Then, 
the conditions in (2) implies that 
\begin{equation}\label{eq:ass:(2) of stb coord}
\degw f_1<w_3\quad\text{and}\quad 
\degw f_2<w_3+\degw f_1<2w_3. 
\end{equation}
Hence, 
$f_1$ belongs to $k[x_1,x_2]$. 
Thus, 
if $f_2$ belongs to $k[x_1,x_2]$, 
then $F$ belongs to $\T _3(k)$ as before. 
Assume that 
$f_2$ does not belong to $k[x_1,x_2]$. 
Since 
$f_1$ is a coordinate of $k[x_1,x_2]$ over $k$ 
by Theorem~\ref{thm:Yu}, 
there exists $g\in k[x_1,x_2]$ 
such that $k[f_1,g]=k[x_1,x_2]$. 
Then, 
we have $k'[g,x_3]=k'[f_2,f_3]$, 
where $k':=k[f_1]$. 
Hence, 
there exists a coordinate 
$p=p(y,z)$ of the polynomial ring $k'[y,z]$ over $k'$ 
such that $f_2=p(g,x_3)$. 
Then, 
we have $\deg _zp=\deg _{x_3}f_2\leq 1$, 
since $\degw f_2<2w_3$ 
by (\ref{eq:ass:(2) of stb coord}). 
Since $f_2$ does not belong to $k[x_1,x_2]$ 
by assumption, 
we conclude that $\deg _zp=1$. 
Write $p=h_1z+h_0$, 
where $h_0,h_1\in k'[y]$ with $h_1\neq 0$. 
Then, 
(\ref{eq:ass:(2) of stb coord}) yields that 
\begin{align*}
w_3+\degw f_1>
\degw f_2=\degw {(h_1(g)x_3+h_0(g))} 
\geq \degw h_1(g)x_3, 
\end{align*}
and so $\degw h_1(g)<\degw f_1$. 
We show that $h_1$ belongs to $k'$. 
Put $d=\deg _yh_1$. 
Take any integer $l>\deg _yh_0-d$, 
and define $\vv =(1,l)\in \Z ^2$. 
Then, 
we have $\degv h_1z=d+l>\degv h_0$, 
and so 
$$
p^{\vv }=(h_1z+h_0)^{\vv }
=(h_1z)^{\vv }=h_1^{\vv }z
=ay^dz, 
$$
where $a\in k'\sm \zs $ 
is the leading coefficient of $h_1$. 
Since $p$ is a coordinate of $k'[y,z]$ over $k'$, 
we know that $d=0$ 
by the remark after Theorem~\ref{thm:coords}. 
Thus, 
$h_1$ belongs to $k'$. 
Since $\degw h_1(g)<\degw f_1$ 
as mentioned, 
it follows that $h_1$ belongs to $k$. 
Therefore, 
$f_2=h_1x_3+h_0(g)$ has the same form as $f_3$ 
in Proposition~\ref{prop:sc}. 
Since $f_1$ belong to $k[x_1,x_2]$, 
we conclude that 
$F$ belongs to $\T _3(k)$ 
by Proposition~\ref{prop:sc} (i). 
This completes the proof of 
the case (2) of Theorem~\ref{thm:tame:stb coord}.

\section{Tameness of weighted multidegrees}\label{sect:twm}
\setcounter{equation}{0}

In this section, 
we give two kinds of sufficient conditions 
for elements of $\mdegw (\Aut _k\kx )$ 
to belong to $|\E _n^{\w }|$, 
which can be viewed as generalizations of 
Proposition~\ref{prop:Karas345}.

\begin{lem}\label{lem:suffice1}
Let $\kappa $ be any commutative ring, 
and let $\w \in \Gamma ^n$ and 
$d_i,e_i\in \Gamma $ for $i=1,\ldots ,n$. 
Assume that there exist 
$\sigma ,\tau \in \mathfrak{S}_n$ 
and $0\leq r\leq n$ such that 
\begin{equation}\label{eq:suff1:assump}
d_{\sigma (i)}\in \sum _{j=1}^{i-1}\Zn d_{\sigma (j)}
+\sum _{j=i+1}^n\Zn e_{\tau (j)}
\quad\text{and}\quad 
d_{\sigma (i)}\geq e_{\tau (i)}
\end{equation}
for $i=1,\ldots ,r$, 
and $d_{\sigma (i)}=e_{\tau (i)}$ 
for $i=r+1,\ldots ,n$. 
If $\mdegw \psi =(e_1,\ldots ,e_n)$ 
for some $\psi \in \Aut _{\kappa }^{\w }\kappa [\x ]$, 
then there exists $\phi \in \E _n(\kappa )$ 
such that $\psi \circ \phi $ belongs to 
$\Aut _{\kappa }^{\w }\kappa [\x ]$ 
and $\mdegw \psi \circ \phi =(d_1,\ldots ,d_n)$. 
\end{lem}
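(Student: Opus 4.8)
\noindent\emph{Proof sketch.}
The plan is to absorb the permutations $\sigma$ and $\tau$ into composition with permutation automorphisms, and then to build $\phi$ out of at most $r$ elementary factors, produced one at a time in increasing order of index. It is standard that every permutation automorphism of $\kappa[\x]$ is a product of elementary automorphisms, hence belongs to $\E_n(\kappa)$, and clearly belongs to $\Aut_\kappa^{\w}\kappa[\x]$; moreover, composing an element of $\Aut_\kappa^{\w}\kappa[\x]$ on the right with a permutation automorphism keeps it in $\Aut_\kappa^{\w}\kappa[\x]$ and permutes the entries of its $\w$-multidegree accordingly. Let $P$ be the permutation automorphism $x_i\mapsto x_{\tau(i)}$, so $\psi\circ P\in\Aut_\kappa^{\w}\kappa[\x]$ has $\w$-multidegree $(e_{\tau(1)},\dots,e_{\tau(n)})$; setting $e_i':=e_{\tau(i)}$ and $d_i':=d_{\sigma(i)}$, the hypothesis (\ref{eq:suff1:assump}) reads
\[
d_i'\in\sum_{j=1}^{i-1}\Zn d_j'+\sum_{j=i+1}^{n}\Zn e_j'\quad\text{and}\quad d_i'\ge e_i'\qquad(1\le i\le r),
\]
with $d_i'=e_i'$ for $r<i\le n$. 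It then suffices to find $\phi'\in\E_n(\kappa)$ with $(\psi\circ P)\circ\phi'\in\Aut_\kappa^{\w}\kappa[\x]$ of $\w$-multidegree $(d_1',\dots,d_n')$: composing once more, on the right, with the permutation automorphism $Q\colon x_i\mapsto x_{\sigma^{-1}(i)}$ rearranges $(d_1',\dots,d_n')=(d_{\sigma(1)},\dots,d_{\sigma(n)})$ into $(d_1,\dots,d_n)$, so $\phi:=P\circ\phi'\circ Q\in\E_n(\kappa)$ satisfies $\psi\circ\phi=((\psi\circ P)\circ\phi')\circ Q\in\Aut_\kappa^{\w}\kappa[\x]$ and $\mdegw(\psi\circ\phi)=(d_1,\dots,d_n)$.

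To produce $\phi'$ I would induct on $i=0,1,\dots,r$, constructing $\psi\circ P=\Psi^{(0)},\Psi^{(1)},\dots,\Psi^{(r)}$ in $\Aut_\kappa^{\w}\kappa[\x]$ such that the $j$-th coordinate of $\Psi^{(i)}$ has $\w$-degree $d_j'$ for $j\le i$ and $e_j'$ for $j>i$. Given $\Psi^{(i-1)}$: if $d_i'=e_i'$, put $\Psi^{(i)}:=\Psi^{(i-1)}$; otherwise, using (\ref{eq:suff1:assump}) choose $a_1,\dots,a_{i-1},b_{i+1},\dots,b_n\in\Zn$ with $d_i'=\sum_{j<i}a_jd_j'+\sum_{j>i}b_je_j'$, set $m_i:=\prod_{j<i}x_j^{a_j}\prod_{j>i}x_j^{b_j}$, note $m_i\in\kappa[\{x_l\mid l\ne i\}]$, let $\phi_i$ be the elementary automorphism with $\phi_i(x_i)=x_i+m_i$ and $\phi_i(x_l)=x_l$ for $l\ne i$, and set $\Psi^{(i)}:=\Psi^{(i-1)}\circ\phi_i$. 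Only the $i$-th coordinate of $\Psi^{(i)}$ differs from that of $\Psi^{(i-1)}$, namely it becomes $\Psi^{(i-1)}(x_i)+\Psi^{(i-1)}(m_i)$. Since the $\w$-initial forms of the coordinates of $\Psi^{(i-1)}$ are nonzero divisors, iterating (\ref{eq:prod deg}) shows that $\Psi^{(i-1)}(m_i)$ has $\w$-degree $\sum_{j<i}a_jd_j'+\sum_{j>i}b_je_j'=d_i'$ and $\w$-initial form the corresponding product of powers of those initial forms, which is again a nonzero divisor. As $d_i'>e_i'=\degw\Psi^{(i-1)}(x_i)$, Lemma~\ref{lem:IP} gives $\degw(\Psi^{(i-1)}(x_i)+\Psi^{(i-1)}(m_i))=d_i'$ with $\w$-initial form equal to that of $\Psi^{(i-1)}(m_i)$; hence $\Psi^{(i)}\in\Aut_\kappa^{\w}\kappa[\x]$ and the induction hypothesis persists. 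Finally $\Psi^{(r)}=(\psi\circ P)\circ(\phi_1\circ\cdots\circ\phi_r)$ has $\w$-multidegree $(d_1',\dots,d_n')$, so $\phi':=\phi_1\circ\cdots\circ\phi_r$ works.

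The step that needs genuine care is staying inside $\Aut_\kappa^{\w}\kappa[\x]$ at every stage. This is why the case $d_i'=e_i'$ is handled by doing nothing: adding $m_i$ there would keep the $\w$-degree but could turn the $\w$-initial form into a zero divisor, or cancel it altogether, whereas when $d_i'>e_i'$ the new $\w$-initial form is forced to equal that of $\Psi^{(i-1)}(m_i)$, a nonzero divisor. It is also why (\ref{eq:prod deg}) and Lemma~\ref{lem:IP} must be applied hand in hand with the standing nonzero-divisor hypothesis, which the induction is designed to carry along. The permutation bookkeeping around $\sigma$, $\tau$, and the observation that $m_i$ involves no $x_i$ (so that $\phi_i$ is genuinely elementary), are then routine.
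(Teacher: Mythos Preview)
Your proof is correct and follows essentially the same approach as the paper: both first absorb $\sigma$ and $\tau$ into permutation automorphisms to reduce to $\sigma=\tau=\id$, and then build $\phi$ by composing at most $r$ elementary automorphisms, one for each index $i\in\{1,\dots,r\}$, adding a monomial in the $x_j$ with $j\neq i$ whose $\w$-degree is $d_i$, and doing nothing when $d_i=e_i$. The paper phrases the second step as an induction on $r$ (first obtain multidegree $(d_1,\dots,d_{r-1},e_r,\dots,e_n)$ by the induction hypothesis, then apply one more elementary map at position $r$), which unwinds to exactly your explicit iteration $\Psi^{(0)},\dots,\Psi^{(r)}$; the case split $d_i>e_i$ versus $d_i=e_i$ and the reason for it (preserving membership in $\Aut_\kappa^{\w}\kappa[\x]$) are identical in both arguments.
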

\begin{proof}
Set $s=(x_{\sigma (1)},\ldots ,x_{\sigma (n)})$. 
Then, 
it suffices to show that 
$\psi \circ (\phi \circ s^{-1})$ belongs to 
$\Aut _{\kappa }^{\w }\kappa [\x ]$ and 
$\mdegw \psi \circ (\phi \circ s^{-1})=(d_1,\ldots ,d_n)$ 
for some $\phi \in \E _n(\kappa )$, 
since $\phi $ belongs to $\E _n(\kappa )$ if and only if 
so does $\phi \circ s^{-1}$. 
Note that 
$\psi \circ \phi \circ s^{-1}$ belongs to 
$\Aut _{\kappa }^{\w }\kappa [\x ]$ if and only if 
so does $\psi \circ \phi$, 
and $\mdegw \psi \circ \phi \circ s^{-1}=(d_1,\ldots ,d_n)$ 
if and only if $\mdegw \psi \circ \phi 
=(d_{\sigma (1)},\ldots ,d_{\sigma (n)})$. 
Hence, 
we are reduced to proving that 
$\psi \circ \phi$ belongs to 
$\Aut _{\kappa }^{\w }\kappa [\x ]$ 
and $\mdegw \psi \circ \phi 
=(d_{\sigma (1)},\ldots ,d_{\sigma (n)})$ 
for some $\phi \in \E _n(\kappa )$. 
Therefore, 
we may assume that $\sigma =\id $ 
by changing the indices of $d_1,\ldots ,d_n$ 
if necessary. 
Next, 
set $t=(x_{\tau (1)},\ldots ,x_{\tau (n)})$. 
Then, 
it suffices to show that 
$\psi \circ (t\circ \phi )$ belongs to 
$\Aut _{\kappa }^{\w }\kappa [\x ]$ 
and 
$\mdegw \psi \circ (t\circ \phi )=(d_1,\ldots ,d_n)$ 
for some $\phi \in \E _n(\kappa )$ 
similarly. 
Since 
$\mdegw \psi \circ t=(e_{\tau (1)},\ldots ,e_{\tau (n)})$, 
we may assume that $\tau =\id $ 
by replacing $\psi $ with $\psi \circ t$ 
and changing the indices of $e_1,\ldots ,e_n$ 
if necessary.

We prove the lemma by induction on $r$. 
When $r=0$, 
we have $d_i=e_i$ for each $i$. 
Since $\psi $ is an element of 
$\Aut _{\kappa }^{\w }\kappa [\x ]$, 
the assertion holds for 
$\phi =\id _{\kappa [\x ]}$. 
Assume that $r\geq 1$. 
Then, 
the assumption of the lemma 
is satisfied 
even if $(d_1,\ldots ,d_n)$ is replaced by 
$(d_1,\ldots ,d_{r-1},e_{r},\ldots ,e_n)$, 
since (\ref{eq:suff1:assump}) holds for any $i<r$, 
and $d_i=e_i$ for $i\geq r$. 
Since $r$ is reduced by one in this case, 
there exists $\phi '\in \E _n(\kappa )$ such that 
$\psi \circ \phi '$ belongs to 
$\Aut _{\kappa }^{\w }\kappa [\x ]$ and 
\begin{equation}\label{eq:lem:suff1:ind ass}
\mdegw \psi \circ \phi '
=(d_1,\ldots ,d_{r-1},e_r,\ldots ,e_n)
\end{equation}
by induction assumption. 
By (\ref{eq:suff1:assump}) with $i=r$, 
we have $d_r\geq e_r$ 
and 
$$
d_r=a_1d_1+\cdots +a_{r-1}d_{r-1}
+a_{r+1}e_{r+1}+\cdots +a_ne_n
$$
for some $a_j\in \Zn $ for each $j$. 
Set 
$f=(\psi \circ \phi ')(x_1^{a_1}\cdots x_n^{a_n})$ 
and $g=(\psi \circ \phi ')(x_r)$, 
where $a_r=0$. 
Then, 
we have $\degw f=d_r$ and $\degw g=e_r$ 
in view of (\ref{eq:lem:suff1:ind ass}). 
Since 
$\psi \circ \phi '$ is an element of 
$\Aut _{\kappa }^{\w }\kappa [\x ]$, 
we see that $f^{\w }$ and $g^{\w }$ 
are nonzero divisors of $\kappa [\x ]$. 
Define $\phi ''\in \E _n(\kappa )$ by 
$\phi ''(x_r)=x_r+\alpha x_1^{a_1}\cdots x_n^{a_n}$ 
and $\phi ''(x_i)=x_i$ for each $i\neq r$, 
where $\alpha =1$ if $d_r>e_r$, 
and $\alpha =0$ if $d_r=e_r$. 
Then, 
we have $(\psi \circ \phi '\circ \phi '')(x_r)=g+\alpha f$. 
Since $\degw g=e_r$ and $\degw f=d_r$, 
we get $\degw (\psi \circ \phi '\circ \phi '')(x_r)=d_r$ 
by the definition of $\alpha $. 
Moreover, 
$(\psi \circ \phi '\circ \phi '')(x_r)^{\w }$ 
is equal to $f^{\w }$ or $g^{\w }$, 
and hence 
is a nonzero divisor of $\kappa [\x ]$. 
If $i\neq r$, 
then we have 
$(\psi \circ \phi '\circ \phi '')(x_i)
=(\psi \circ \phi ')(x_i)$, 
for which $(\psi \circ \phi ')(x_i)^{\w }$ 
is a nonzero divisor of $\kappa [\x ]$. 
Thus, 
$\psi \circ \phi '\circ \phi ''$ 
belongs to $\Aut _{\kappa }^{\w }\kappa [\x ]$. 
Moreover, 
we have 
$$
\mdegw \psi \circ \phi '\circ \phi ''
=(d_1,\ldots ,d_r,e_{r+1},\ldots ,e_n)
$$
by (\ref{eq:lem:suff1:ind ass}). 
Therefore, 
the assertion holds for 
$\phi =\phi '\circ \phi ''$. 
\end{proof}

Let us discuss the case of $n=3$. 
For $\w \in \Gamma ^3$, 
$d_1,d_2,d_3\in \Gamma $ 
and $\sigma ,\tau \in \mathfrak{S}_3$, 
consider the following conditions:

\smallskip

\nd (1) 
$d_{\sigma (i)}\geq w_{\tau (i)}$ 
for $i=1,2,3$.

\smallskip

\nd (2) 
$d_{\sigma (1)}$, $d_{\sigma (2)}$ 
and $d_{\sigma (3)}$ belong to 
$\Zn w_{\tau (2)}+\Zn w_{\tau (3)}$, 
$\Zn d_{\sigma (1)}+\Zn w_{\tau (3)}$ 
and $\Zn d_{\sigma (1)}+\Zn d_{\sigma (2)}$, 
respectively.

\smallskip

\nd (3) 
$d_{\sigma (i)}\geq w_{\tau (i)}$ 
for $i=1,2$ 
and $d_{\sigma (3)}=w_{\tau (3)}$.

\smallskip

\nd (4) 
$d_{\sigma (1)}$ and $d_{\sigma (2)}$ 
belong to $\Zn w_{\tau (2)}+\Zn w_{\tau (3)}$ 
and $\Zn d_{\sigma (1)}+\Zn w_{\tau (3)}$, 
respectively.

\smallskip

If (1) and (2) are satisfied, 
then the assumption of 
Lemma~\ref{lem:suffice1} holds 
for $\psi =\id _{\kappa [\x ]}$ and $r=3$. 
Hence, 
for any commutative ring $\kappa $, 
there exists $\phi \in \E _3^{\w }(\kappa )$ 
such that $\mdegw \phi =(d_1,d_2,d_3)$ 
by Lemma~\ref{lem:suffice1}. 
Therefore, 
$(d_1,d_2,d_3)$ belongs to $|\E _3^{\w }|$. 
The same holds when (3) and (4) are satisfied, 
since the assumption of 
Lemma~\ref{lem:suffice1} is fulfilled 
for $\psi =\id _{\kappa [\x ]}$ and $r=2$.

With the aid of 
Theorems~\ref{thm:cdeg} and \ref{thm:sc}, 
we can derive the following theorem 
from Lemma~\ref{lem:suffice1}.

\begin{thm}\label{thm:suffice1}
Let $\w \in (\Gamma _+)^3$ 
and $(d_1,d_2,d_3)\in \mdegw (\Aut _k\kx )$ 
for $n=3$ be such that 
\begin{equation}\label{eq:thm:suffice1}
d_1\in \Zn w_2+\Zn w_3,\quad 
d_2\in \Zn d_1+\Zn w_3,\quad
d_3\in \Zn d_1+\Zn d_2. 
\end{equation}
If one of the following conditions holds, 
then $(d_1,d_2,d_3)$ belongs to $|\E _3^{\w }|$$:$ 

{\rm (a)} $d_1\leq d_2$. \ 
{\rm (b)} $d_2\geq w_2$. \ 
{\rm (c)} $d_2=w_3$. \ 
{\rm (d)} $d_1\in (\Zn w_3)
\cup (\Zn w_2+\Zn d_2)$. 
\end{thm}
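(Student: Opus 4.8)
The plan is to feed the data into the criterion recorded in the discussion of conditions (1)--(4) that follows Lemma~\ref{lem:suffice1}: it suffices to produce $\sigma ,\tau \in \mathfrak{S}_3$ for which conditions (1) and (2) of that discussion hold (then Lemma~\ref{lem:suffice1} with $\psi =\id _{\kappa [\x ]}$ and $r=3$ yields, for every commutative ring $\kappa $, an element $\phi \in \E _3^{\w }(\kappa )$ with $\mdegw \phi =(d_1,d_2,d_3)$), or for which conditions (3) and (4) hold (apply it with $r=2$); in either case $(d_1,d_2,d_3)\in |\E _3^{\w }|$. The starting observation that makes this feasible is that the hypothesis (\ref{eq:thm:suffice1}) is literally condition (2) of that discussion for $\sigma =\tau =\id $, so the only thing left to arrange is the chain of order relations in (1), or a descent to the $r=2$ regime of (3) and (4).

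First I would replace $k$ by its field of fractions --- this affects neither the $\w $-multidegree of any $F\in \Aut _k\kx $ nor the hypotheses and conclusion, which involve only $d_1,d_2,d_3$ and $\w $ --- so that I may assume $k$ is a field. Write $W=\max \{ w_1,w_2,w_3\} $. Since $(d_1,d_2,d_3)\in \mdegw (\Aut _k\kx )$, Theorem~\ref{thm:base autom}~(i) supplies $\rho \in \mathfrak{S}_3$ with $d_i\geq w_{\rho (i)}$ for all $i$; as $\w \in (\Gamma _+)^3$ this forces every $d_i>0$ and $d_i\geq \min _lw_l$. If two of $d_1,d_2,d_3$ are $\leq W$, Theorem~\ref{thm:sc} already gives the conclusion, so I may assume at least two of them exceed $W$. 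Then at most one index $j$ has $d_j<w_j$; if none does, conditions (1) and (2) hold for $\sigma =\tau =\id $ and we are done. So assume exactly one such $j$; the remaining two $d_i$ exceed $W$, hence exceed every $w_l$. From $d_3\in \Zn d_1+\Zn d_2$ with $d_3>0$ one gets $d_3\geq \min \{ d_1,d_2\} $, which forbids $j=3$; and if $j=2$, then $d_2\in \Zn d_1+\Zn w_3$ with $d_2<w_2\leq W<d_1$ forces $d_2\in \Zn w_3$.

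What remains is to handle $j=1$ and $j=2$ under each of (a)--(d), and this is the heart of the matter. In each surviving configuration the defective coordinate $d_j$ turns out to be either a multiple of a single $w_l$ or equal to some $w_l$ --- here one also invokes Theorem~\ref{thm:cdeg} to pin $d_j$ into ${\rm C}(\w )\cup \{ w_1,w_2,w_3\} $ when needed --- so one can place $d_j$ at the bottom level of a staircase over the $w_l$-slot, or, when $d_j=w_l$, at the top of an $r=2$ staircase of the type (3)--(4). The two large coordinates then occupy the other slots, where all order relations demanded by (1) or (3) are automatic, and the membership relations (2) or (4) reduce to (\ref{eq:thm:suffice1}) after the relabelling. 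Each of (a)--(d) serves to select which template applies: (b) excludes $j=2$ outright, (c) puts $d_2$ into the convenient position $d_2=w_3$, (d) forces $d_1$ (in the surviving case) to be a multiple of $w_2$ or of $w_3$, and (a) breaks the remaining ambiguity about the order of $d_1$ and $d_2$ when choosing $\sigma $.

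The hard part will be this last step. For each of the finitely many surviving configurations one must check that a single pair $(\sigma ,\tau )$ can satisfy the membership relations (2) or (4) and the order relations (1) or (3) simultaneously --- which sometimes requires combining (a)--(d) with the extra inequalities $d_i\geq w_{\rho (i)}$ from Theorem~\ref{thm:base autom}~(i) in a slightly delicate way --- and that (a)--(d) between them exhaust precisely the cases left over after Theorem~\ref{thm:sc} and the $\sigma =\tau =\id $ case. I expect this bookkeeping, rather than any single idea, to be the bulk of the proof.
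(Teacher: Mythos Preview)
Your proposal is correct and follows essentially the same route as the paper: reduce via Theorem~\ref{thm:sc} to the situation where two of the $d_i$ exceed $\max\{w_1,w_2,w_3\}$, then produce $\sigma,\tau\in\mathfrak{S}_3$ satisfying conditions (1)--(2) or (3)--(4) from the discussion after Lemma~\ref{lem:suffice1}, invoking Theorem~\ref{thm:cdeg} to control the one small $d_j$. The paper organizes the casework by treating (a) first and then (b), (c), (d) under the standing assumption that (a) fails, whereas you organize it by which index $j$ has $d_j<w_j$; these are equivalent bookkeeping schemes. One small wording slip: condition (d) does not say $d_1$ is a multiple of $w_2$ or $w_3$, but rather $d_1\in\Zn w_3$ or $d_1\in\Zn w_2+\Zn d_2$; the paper uses the second alternative as written (placing $d_1$ over the $w_2$-slot via $\tau=(2,3)$), so you will need that form, not the simplified one, when you carry out the check.
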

\begin{proof}
Thanks to Theorem~\ref{thm:sc}, 
we may assume that two of 
$d_1$, $d_2$ and $d_3$ are greater than 
$w:=\max \{ w_1,w_2,w_3\} $. 
Since $d_3\neq 0$ belongs to $\Zn d_1+\Zn d_2$ 
by (\ref{eq:thm:suffice1}), 
we have $d_3\geq d_1$ or $d_3\geq d_2$. 
Hence, 
we may assume that $d_3>w$. 
Similarly, 
we may assume that $d_2>w$ if (a) holds, 
and $d_1>w$ otherwise. 
In the following, 
we check that (1) and (2), 
or (3) and (4) hold for some 
$\sigma ,\tau \in \mathfrak{S}_3$. 
We note that (2) and (4) 
are clear from (\ref{eq:thm:suffice1}) 
if $\sigma =\tau =\id $.

First, 
assume that (a) holds. 
Then, 
we have 
\begin{equation}\label{eq:thm:suffice11}
d_i>w\geq w_j\quad\text{for}\quad i=2,3
\quad\text{and}\quad j=1,2,3. 
\end{equation}
Hence, 
if $d_1\geq w_1$, 
then (1) holds for $\sigma =\tau =\id $. 
Since (2) holds for $\sigma =\tau =\id $ as mentioned, 
we may assume that $d_1<w_1$. 
By Theorem~\ref{thm:cdeg}, 
$d_1$ belongs to C$(\w )$ or $\{ w_1,w_2,w_3\} $. 
Hence, 
there exists $1\leq i\leq 3$ such that 
$d_1\geq w_i$ and $d_1\in \sum _{j\neq i}\Zn d_j$, 
or $d_1=w_i$. 
Since $d_1<w_1$, 
it follows that $d_1\geq w_{\rho (2)}$ 
and $d_1\in \Zn w_{\rho (3)}$, 
or $d_1=w_{\rho (3)}$ 
for some $\rho \in \{ \id ,(2,3)\} $. 
We show that (1) and (2) hold for 
$\sigma =(1,2)$ and $\tau =\rho $ 
when $d_1\geq w_{\rho (2)}$ 
and $d_1\in \Zn w_{\rho (3)}$. 
Since $d_{\sigma (2)}\geq w_{\rho (2)}$, 
we have (1) due to (\ref{eq:thm:suffice11}). 
By (\ref{eq:thm:suffice1}), 
$d_{\sigma (1)}$ belongs to $\Zn d_1+\Zn w_3$. 
Since $d_1\in \Zn  w_{\rho (3)}$ 
and $3\in \{ \rho (2),\rho (3)\} $, 
we have 
$\Zn d_1+\Zn w_3\subset \Zn w_{\rho (2)}+\Zn w_{\rho (3)}$. 
Thus, 
we get $d_{\sigma (1)}\in \Zn w_{\rho (2)}+\Zn w_{\rho (3)}$. 
Since $d_1\in \Zn  w_{\rho (3)}$, 
we have $d_{\sigma (2)}\in \Zn d_{\sigma (1)}+\Zn w_{\rho (3)}$. 
Since $d_3\in \Zn d_1+\Zn d_2$ by (\ref{eq:thm:suffice1}), 
and $\sigma =(1,2)$, 
we have 
$d_{\sigma (3)}\in \Zn d_{\sigma (1)}+\Zn d_{\sigma (2)}$. 
Therefore, 
(2) is satisfied. 
Next, 
we show that (3) and (4) hold for 
$\sigma =(1,2,3)$ and $\tau =\rho $ 
when $d_1=w_{\rho (3)}$. 
Since $d_{\sigma (3)}=w_{\rho (3)}$, 
we have (3) due to (\ref{eq:thm:suffice11}). 
Since 
$\Zn d_1+\Zn w_3\subset \Zn w_{\rho (2)}+\Zn w_{\rho (3)}$ 
and $\Zn d_1+\Zn d_2=\Zn w_{\rho (3)}+\Zn d_{\sigma (1)}$, 
(4) follows from (\ref{eq:thm:suffice1}).

Next, 
assume that (a) does not hold. 
Then, 
we have 
$d_i>w\geq w_j$ for $i=1,3$ and $j=1,2,3$ as remarked. 
Hence, 
if (b) is satisfied, 
then (1) and (2) hold for $\sigma =\tau =\id $ 
as before. 
In the case of (c), 
(3) and (4) hold for $\sigma =(2,3)$ and $\tau =\id $, 
since 
$d_{\sigma (2)}$ belongs to 
$\Zn d_1+\Zn d_2=\Zn d_1+\Zn w_3$. 
Finally, 
we consider the case (d). 
In view of (b) and (c), 
we may assume that $d_2<w_2$ and $d_2\neq w_3$. 
We claim that $d_2\geq w_1$. 
In fact, 
if not, 
we have $d_2<w_i$ for $i=1,2$. 
This implies that $d_2=\degw f$ 
for a coordinate $f$ of $\kx $ over $k$ 
belonging to $k[x_3]$, 
and so $d_2=w_3$, 
a contradiction. 
Hence, 
(1) holds for 
$\sigma =\id $ and $\tau =(1,2)$, 
and for $\sigma =(1,2)$ 
and $\tau =(2,3)$. 
If $d_1$ belongs to $\Zn w_3$, 
then (2) holds for $\sigma =\id $ 
and $\tau =(1,2)$ by (\ref{eq:thm:suffice1}). 
We check that (2) holds 
for $\sigma =(1,2)$ and $\tau =(2,3)$ 
when $d_1\not\in \Zn w_3$. 
Since $d_{\sigma (1)}$ belongs to 
$\Zn d_1+\Zn w_3$ by (\ref{eq:thm:suffice1}), 
and (a) does not hold by assumption, 
$d_{\sigma (1)}$ belongs to $\Zn w_3$, 
and hence to $\Zn w_{\tau (2)}+\Zn w_{\tau (3)}$. 
Since $d_1\not\in \Zn w_3$, 
we know by (d) that $d_{\sigma (2)}=d_1$ 
belongs to $\Zn w_2+\Zn d_2=
\Zn d_{\sigma (1)}+\Zn w_{\tau (3)}$. 
Since $d_3\in \Zn d_1+\Zn d_2$ by (\ref{eq:thm:suffice1}), 
and $\sigma =(1,2)$, 
we have $d_{\sigma (3)}\in \Zn d_{\sigma (1)}+\Zn d_{\sigma (2)}$. 
Therefore, 
(2) is satisfied. 
\end{proof}

Next, 
we give another kind of generalization of 
Proposition~\ref{prop:Karas345}. 
Assume that $n\geq 2$. 
Take any $d_1,\ldots ,d_n\in \Gamma _+$ 
and $\w \in (\Gamma _+)^n$. 
For $d\in \Gamma _+$, 
$1\leq l\leq n$ and $2\leq m\leq n$, 
consider the following conditions:

\smallskip 

\nd (a) 
$d_1,\ldots ,d_n$ belong to $\N d$. 

\smallskip 

\nd (b) 
$d=w_l$, 
or $d>w_l$ and 
$d$ belongs to $\sum _{j\neq l}\Zn w_j$. 

\smallskip 

\nd (c) 
$d_m$ belongs to $\sum _{j=1}^{m-1}\Zn d_j$. 

\smallskip 

\nd (d) 
If $l<m$, 
then $d_i\geq w_{i+1}$ for each $l\leq i<m$. 

\smallskip

Then, 
we have the following lemma.

\begin{lem}\label{lem:suffice2}
Let $\w \in (\Gamma _+)^n$ and 
$(d_1,\ldots ,d_n)\in \mdegw (\Aut _k\kx )$ 
for $n\geq 2$ 
be such that $w_1\leq \cdots \leq w_n$ 
and $d_1\leq \cdots \leq d_n$. 
If there exist $d\in \Gamma _+$, 
$1\leq l\leq n$ and $2\leq m\leq n$ 
which satisfy 
{\rm (a)} through {\rm (d)}, 
then $(d_1,\ldots ,d_n)$ 
belongs to $|\E _n^{\w }|$. 
\end{lem}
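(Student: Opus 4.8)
The plan is to realize $(d_1,\dots,d_n)$ by feeding an explicit base automorphism into Lemma~\ref{lem:suffice1}. First I would reduce to the case that $k$ is a field by replacing $k$ with its field of fractions. If $\degw F=|\w|$ for the given $F\in\Aut_k\kx$ with $\mdegw F=(d_1,\dots,d_n)$, then by Theorem~\ref{thm:base autom}~(ii) we have $(d_1,\dots,d_n)=\w_{\sigma}$ for some $\sigma\in\mathfrak{S}_n$, and since both $(d_i)$ and $(w_i)$ are non-decreasing this forces $(d_1,\dots,d_n)=\w$, which lies in $|\E_n^{\w}|$ since $\id _{\kappa [\x ]}\in\E_n^{\w}(\kappa)$ for every $\kappa$; so from now on I may assume $\degw F>|\w|$. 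Then Theorem~\ref{thm:base autom}~(i) together with the sortedness of $(d_i)$ and $(w_i)$ gives $d_i\geq w_i$ for all $i$, and Corollary~\ref{cor:weighted degree} produces an index $i_0$ with $d_j\in\sum_{l\neq i_0}\Zn w_l$ for every $j$. Finally, by (a) I write $d_i=c_id$ with $c_i\in\N$ and $c_1\leq\cdots\leq c_n$.

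Next I would build the base automorphism from condition (b): there is a monomial $\mu\in\kappa[\x]$ with $\degw\mu=d$, namely $\mu=x_l$ if $d=w_l$ and $\mu=\prod_{j\neq l}x_j^{a_j}$ with $\sum_{j\neq l}a_jw_j=d$ if $d>w_l$. Taking $E$ to be $\id_{\kappa[\x]}$ in the first case and the elementary automorphism $x_l\mapsto x_l+\mu$, $x_i\mapsto x_i$ $(i\neq l)$ in the second, one checks that $E\in\E_n^{\w}(\kappa)$ (its $\w$-initial forms are the monomials $x_i$ for $i\neq l$ and $\mu$ for $i=l$, all non-zero-divisors of $\kappa[\x]$) and $\mdegw E=(w_1,\dots,w_{l-1},d,w_{l+1},\dots,w_n)=:(e_1,\dots,e_n)$. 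I then apply Lemma~\ref{lem:suffice1} with $\psi=E$ and this tuple $(e_1,\dots,e_n)$, the task being to produce $\sigma,\tau\in\mathfrak{S}_n$ and $0\leq r\leq n$ satisfying (\ref{eq:suff1:assump}). I would choose the processing order $\sigma$ so that the coordinates destined for $d_1,\dots,d_{m-1}$ are treated first (each being reached from the monomial $\mu$ of $\w$-degree $d$ still sitting in the unmodified coordinates, using $d_i\geq w_i$ and, when $l<m$, condition (d) to match $\w$-degrees), then the coordinate destined for $d_m$ (reached from $d_1,\dots,d_{m-1}$ by (c)), and finally the remaining coordinates; I would match $\tau$ so that each coordinate starts from an $e$-value no larger than its target, keeping a coordinate of $\w$-degree $d$ available as long as possible. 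Conditions (a)--(d), the inequalities $d_i\geq w_i$, and the relation $d_j\in\sum_{l\neq i_0}\Zn w_l$ are exactly what make (\ref{eq:suff1:assump}) hold, and the verification splits into cases according to whether $l<m$, $l=m$ or $l>m$ and to where $d$ and the $c_i$ sit relative to the weights. Lemma~\ref{lem:suffice1} then yields $\phi\in\E_n(\kappa)$ with $E\circ\phi\in\Aut_{\kappa}^{\w}\kappa[\x]$ and $\mdegw(E\circ\phi)=(d_1,\dots,d_n)$; since $E\circ\phi\in\E_n^{\w}(\kappa)$ and $\kappa$ was an arbitrary commutative ring, $(d_1,\dots,d_n)\in|\E_n^{\w}|$.

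The main obstacle is the combinatorial bookkeeping in choosing $\sigma$, $\tau$, $r$ and verifying (\ref{eq:suff1:assump}). The delicate points are: (i) the degrees $d_{m+1},\dots,d_n$ are not governed directly by (c) and must be obtained as multiples $c_id$ of $d$ from the already-realized degrees together with the coordinates still carrying $\w$-degree $d$, so the processing order must be arranged so that such a coordinate is not consumed prematurely; and (ii) at each step the divisibility membership and the inequality $d_{\sigma(i)}\geq e_{\tau(i)}$ have to be met at once — it is here that condition (d) is needed, to interleave the weight-matching $\tau$ correctly on the range $l\leq i<m$, and here that the hypothesis $(d_1,\dots,d_n)\in\mdegw(\Aut_k\kx)$ does work beyond merely supplying $d_i\geq w_i$, through Corollary~\ref{cor:weighted degree}.
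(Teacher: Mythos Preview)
Your overall strategy---building a base automorphism $E\in\E_n^{\w}(\kappa)$ from (b) with $\mdegw E=(w_1,\dots,w_{l-1},d,w_{l+1},\dots,w_n)$ and then invoking Lemma~\ref{lem:suffice1}---is viable and is different from the paper, which writes down the final element of $\E_n^{\w}(\kappa)$ by an explicit piecewise formula rather than appealing to Lemma~\ref{lem:suffice1}. But the processing order you describe does not work. With $\sigma=\id$ and the slot carrying $d$ saved for last ($\tau(n)=l$), step $i=n$ of (\ref{eq:suff1:assump}) demands $d_n\in\sum_{j<n}\Zn d_j$, i.e.\ $c_n\in\sum_{j<n}\Zn c_j$, which (a)--(d) do not force. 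Concretely, take $n=3$, $\Gamma=\Z$, $\w=(1,1,1)$, $d=1$, $l=3$, $m=2$ and $(d_1,d_2,d_3)=(2,4,5)$: conditions (a)--(d) hold and $(2,4,5)\in\mdeg\E_3(k)$, yet $5\notin\Zn\cdot2+\Zn\cdot4$. Corollary~\ref{cor:weighted degree} gives information about the $d_j$ in terms of the $w_p$, not in terms of the other $d_j$, so it does not rescue this step.

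The repair is to process $d_m$ \emph{last}: set $r=n$, $\sigma(n)=m$, $\tau(n)=l$, and on positions $1,\dots,n-1$ let $\tau=\pi\circ\sigma$ for a bijection $\pi\colon\{1,\dots,n\}\setminus\{m\}\to\{1,\dots,n\}\setminus\{l\}$ with $d_j\geq w_{\pi(j)}$. Such a $\pi$ exists by $d_i\geq w_i$ (Theorem~\ref{thm:base autom}~(i)) together with (d): take $\pi(j)=j$ for $j$ outside $[\min\{l,m\},\max\{l,m\}]$, shift down by one on $(m,l]$ when $l\geq m$, and shift up by one on $[l,m)$ when $l<m$. For $i<n$ the value $e_{\tau(n)}=d$ is still available, so $d_{\sigma(i)}\in\Zn d$ gives the membership in (\ref{eq:suff1:assump}); at $i=n$, condition (c) yields $d_m\in\sum_{j<m}\Zn d_j\subset\sum_{j<n}\Zn d_{\sigma(j)}$, and $d_m=c_m d\geq d=e_{\tau(n)}$. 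With this choice the only input needed from the hypothesis $(d_1,\dots,d_n)\in\mdegw(\Aut_k\kx)$ is $d_i\geq w_i$, so neither the field reduction, the case split on $\degw F=|\w|$, nor Corollary~\ref{cor:weighted degree} is required.
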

\begin{proof}
We remark that 
$d_i\geq w_i$ for $i=1,\ldots ,n$ 
by Theorem~\ref{thm:base autom} (i). 
Take any commutative ring $\kappa $. 
We define $g\in \kappa [\x ]$ 
by $g=x_l$ if $d=w_l$. 
If $d\neq w_l$, 
then we have $d>w_l$ 
and $d=\sum _{j\neq l}a_jw_j$ 
for some $a_j\in \Zn $ by (b). 
In this case, 
we define 
$g=x_l+\prod _{j\neq l}x_j^{a_j}$. 
Then, 
$g^{\w }$ is a nonzero divisor of $\kappa [\x ]$ 
and $\degw g=d$ in either case. 
By (a), 
we may write 
$d_i=e_id$ for $i=1,\ldots ,n$, 
where $e_i\in \N $. 
We define 
$\phi \in \E _n(\kappa )$ by 
$$
\phi (x_i)=\left\{ 
\begin{array}{cl}
g& \text{ if } i=m \\
x_i+\alpha _ig^{e_i} 
& \text{ if } 
i<\min \{ l,m\} \text{ or }i>\max \{l,m\} \\
x_{i-1}+\beta _ig^{e_i} 
& \text{ if } m<i\leq l\\ 
x_{i+1}+\gamma _ig^{e_i} 
& \text{ if } l\leq i<m , 
\end{array}
\right. 
$$
where $\alpha _i=1$ if $d_i>w_i$, 
and $\alpha _i=0$ otherwise, 
where $\beta _i=1$ if $d_i>w_{i-1}$, 
and $\beta _i=0$ otherwise, 
and where $\gamma _i=1$ if $d_i>w_{i+1}$, 
and $\gamma _i=0$ otherwise. 
Then, 
each $\phi (x_i)^{\w }$ is a power of $g^{\w }$ 
or one of $x_i$, $x_{i-1}$ and $x_{i+1}$. 
Hence, 
$\phi (x_i)^{\w }$ 
is a nonzero divisor of $\kappa [\x ]$ 
for each $i$. 
We show that $\degw \phi (x_i)=d_i$ 
for each $i\neq m$. 
This is clear in the cases where 
$\alpha _i=1$, $\beta _i=1$ and $\gamma _i=1$, 
since $\degw g^{e_i}=d_i$ is greater than 
$w_i$, $w_{i-1}$ and $w_{i+1}$ 
in the respective cases. 
If $\alpha _i=0$, 
then we have $\phi (x_i)=x_i$ and $d_i\leq w_i$. 
Since $d_i\geq w_i$ as remarked, 
it follows that $\degw \phi (x_i)=w_i=d_i$. 
If $\beta _i=0$, 
then we have $\phi (x_i)=x_{i-1}$ and $d_i\leq w_{i-1}$. 
Since $d_i\geq w_i\geq w_{i-1}$, 
we get $\degw \phi (x_i)=w_{i-1}=d_i$. 
If $\gamma _i=0$, 
then we have $\phi (x_i)=x_{i+1}$ and $d_i\leq w_{i+1}$. 
Since $d_i\geq w_{i+1}$ by (d), 
we get $\degw \phi (x_i)=w_{i+1}=d_i$. 
Thus, 
$\degw \phi (x_i)=d_i$ 
holds for each $i\neq m$. 
By (c), 
we may write $d_m=\sum _{j=1}^{m-1}c_jd_j$, 
where $c_j\in \Zn $ for each $j$. 
Set $f=\phi (x_1^{c_1}\cdots x_{m-1}^{c_{m-1}})$. 
Then, 
$f^{\w }$ is a nonzero divisor of $\kappa [\x ]$ 
and $\degw f=d_m=e_md\geq d$. 
Define $\psi \in \E _n(\kappa )$ by 
$\psi (x_m)=x_m+\delta x_1^{c_1}\cdots x_{m-1}^{c_{m-1}}$ 
and $\psi (x_i)=x_i$ for each $i\neq m$, 
where $\delta =1$ if $d_m>d$, 
and $\delta =0$ if $d_m=d$. 
Then, 
we have $(\phi \circ \psi )(x_m)=g+\delta f$. 
Since $\degw g=d$ and $\degw f=d_m$, 
we get $\degw (\phi \circ \psi )(x_m)=d_m$ 
by the definition of $\delta $. 
Moreover, 
$(\phi \circ \psi )(x_m)^{\w }$ 
is equal to $f^{\w }$ or $g^{\w }$, 
and hence is a nonzero divisor of $\kappa [\x ]$. 
If $i\neq m$, 
then we have 
$(\phi \circ \psi )(x_i)=\phi (x_i)$, 
for which 
$\phi (x_i)^{\w }$ 
is a nonzero divisor of $\kappa [\x ]$ 
and $\degw \phi (x_i)=d_i$. 
Thus, 
$\phi \circ \psi $ is an element of $\E _n^{\w }(\kappa )$ 
and satisfies $\mdegw \phi \circ \psi =(d_1,\ldots ,d_n)$. 
Therefore, 
$(d_1,\ldots ,d_n)$ belongs to $|\E _n^{\w }|$. 
\end{proof}

Let us discuss the case of $n=3$. 
For $d_1,d_2,d_3,d\in \Gamma _+$ 
and $\w \in (\Gamma _+)^3$, 
consider the following conditions$:$

\smallskip

\nd (A) $d_1$, $d_2$ and $d_3$ belong to $\N d$.

\smallskip

\nd (B) $d$ belongs to $\Zn w_i+\Zn w_3$ 
for some $i\in \{ 1,2\} $, 
or $d\geq w_3$ and $d$ belongs to $\N w_1+\N w_2$.

\smallskip

The following theorem is a refinement 
of Lemma~\ref{lem:suffice2} in the case of $n=3$. 
In fact, 
(a) is equivalent to (A). 
If (b) holds for some $1\leq l\leq 3$, 
then we have (B). 
We have (c) for some $2\leq m\leq 3$ 
if and only if 
$d_2\in \N d_1$ or $d_3\in \Zn d_1+\Zn d_2$.

\begin{thm}
Let $\w \in (\Gamma _+)^3$ and 
$(d_1,d_2,d_3)\in \mdegw (\Aut _k\kx )$ 
for $n=3$ 
be such that 
$w_1\leq w_2 \leq w_3$, 
$d_1\leq d_2\leq d_3$, 
and $d_2\in \N d_1$ or $d_3\in \Zn d_1+\Zn d_2$. 
If {\rm (A)} and {\rm (B)} 
hold for some $d\in \Gamma _+$, 
then $(d_1,d_2,d_3)$ 
belongs to $|\E _3^{\w }|$. 
\end{thm}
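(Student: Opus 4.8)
The plan is to deduce the theorem from Lemma~\ref{lem:suffice2}, using Theorem~\ref{thm:sc} to absorb the degenerate configurations. By the remarks preceding the statement, condition (a) of Lemma~\ref{lem:suffice2} is exactly (A), and the assumption ``$d_2\in \N d_1$ or $d_3\in \Zn d_1+\Zn d_2$'' says precisely that condition (c) holds with $m=2$ in the first case and $m=3$ in the second; I fix such an $m$. It then suffices to produce some $d'\in \Gamma _+$ and some $1\leq l\leq 3$ for which conditions (b) and (d) of Lemma~\ref{lem:suffice2} hold with $d'$ in place of $d$; that lemma then gives $(d_1,d_2,d_3)\in |\E _3^{\w }|$. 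The hypothesis $d$ is one candidate for $d'$, though not always the usable one.

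Before doing that I would make two simplifications. First, if at least two of $d_1,d_2,d_3$ are $\leq \max \{ w_1,w_2,w_3\} =w_3$, then $(d_1,d_2,d_3)\in |\E _3^{\w }|$ by Theorem~\ref{thm:sc}; since $d_1\leq d_2\leq d_3$, I may therefore assume $d_2>w_3$ and $d_3>w_3$. In this situation $d_2$ and $d_3$ exceed every $w_j$, so condition (d) reduces to requiring $d_1\geq w_2$ when $l=1$ and is vacuous otherwise. Second, exactly as in the proof of Theorem~\ref{thm:cdeg}, the choice $\w \in (\Gamma _+)^3$ forces that if $\degw f_1<w_2$ then $f_1\in k[x_1,x_3]\cap k[x_1,x_2]=k[x_1]$, and a one-variable stable coordinate is linear, so $\degw f_1=w_1$; since $\degw f_i=d_i$ this gives the dichotomy $d_1=w_1$ or $d_1\geq w_2$.

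Suppose first $d_1\geq w_2$. A routine case analysis of (B) shows that the hypothesis $d$ satisfies (b) for some $l\in \{1,2,3\}$ in every case except $d=aw_1$ with $a\geq 2$ and $w_1<d<w_2$; for that $l$, condition (d) holds because $d_1\geq w_2$ and $d_2>w_3$, so Lemma~\ref{lem:suffice2} applies. In the exceptional case, (A) forces $d_1,d_2,d_3\in \N d\subseteq \N w_1$ (as $d=aw_1$), so I take $d':=w_1$ and $l:=1$: conditions (a) and (b) are then immediate and (d) again follows from $d_1\geq w_2$ and $d_2>w_3$. Suppose instead $d_1=w_1<w_2$. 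Then (A) and (B) force $d=w_1$, hence $d_i=e_iw_1$ with $e_i\in \N $, and by the first simplification $e_2w_1=d_2>w_3$ and $e_3w_1=d_3>w_3$; here I would bypass Lemma~\ref{lem:suffice2} and exhibit, over an arbitrary commutative ring $\kappa $, the automorphism $G:=(x_1,\,x_2+x_1^{e_2},\,x_3+x_1^{e_3})$, a product of two elementary automorphisms whose $\w $-initial form $(x_1,x_1^{e_2},x_1^{e_3})$ is a triple of nonzero divisors of $\kappa [\x ]$ (since $e_2w_1,e_3w_1$ exceed $w_2$ and $w_3$), so that $G\in \E _3^{\w }(\kappa )$ and $\mdegw G=(d_1,d_2,d_3)$. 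I expect the main obstacle to be the case analysis in this last paragraph: verifying that every pattern permitted by (B) either yields an admissible pair $(d',l)$ for Lemma~\ref{lem:suffice2}, or falls into the $d_1=w_1$ branch treated by hand, or into the two-small-degrees branch treated by Theorem~\ref{thm:sc} --- in short, that replacing (b) by the weaker (B) costs nothing once one is allowed to re-choose $d'$ and to apply these two simplifications.
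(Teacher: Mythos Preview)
Your proposal is correct and follows essentially the same route as the paper's proof: reduce via Theorem~\ref{thm:sc} to the situation $d_2,d_3>w_3$, establish the dichotomy $d_1=w_1$ or $d_1\geq w_2$, handle the first branch by the explicit elementary automorphism $(x_1,x_2+x_1^{e_2},x_3+x_1^{e_3})$, and in the second branch feed Lemma~\ref{lem:suffice2} after checking that (B) yields condition~(b) for some $l$, replacing $d$ by $w_1$ in the lone problematic case $d\in\N w_1$. The only cosmetic difference is that the paper treats all of $d\in\N w_1$ uniformly by passing to $d'=w_1$, whereas you isolate the narrower exceptional window $w_1<d<w_2$; both work.
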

\begin{proof}
Thanks to Theorem~\ref{thm:sc}, 
we may assume that two of $d_1$, $d_2$ and $d_3$ 
are greater than $\max \{ w_1,w_2,w_3\} $. 
Then, 
we have $d_3\geq d_2>w_3\geq w_2\geq w_1$. 
Since $d\neq 0$, 
we have $d\geq w_1$ by (B).

First, 
assume that $d_1<w_2$. 
Then, 
we have $d_1=\degw f$ 
for some coordinate $f$ of $\kx $ over $k$ 
belonging to $k[x_1]$. 
Hence, 
we know that $d_1=w_1$. 
By (A), 
we may write $d_i=e_id$ for $i=1,2,3$, 
where $e_i\in \N $. 
Since $d\geq w_1$ as mentioned, 
we get $d_1=d=w_1$. 
Take any commutative ring $\kappa $, 
and define $\phi \in \E _3^{\w }(\kappa )$ 
by $\phi (x_1)=x_1$ 
and $\phi (x_i)=x_i+x_1^{e_i}$ for $i=2,3$. 
Then, 
we have $\mdegw \phi =(d_1,d_2,d_3)$, 
since $d_i>w_i$ for $i=2,3$. 
Therefore, 
$(d_1,d_2,d_3)$ belongs to $|\E _3^{\w }|$.

Next, 
assume that $d_1\geq w_2$. 
We show that 
$(d_1,d_2,d_3)$ belongs to $|\E _3^{\w }|$ 
using Lemma~\ref{lem:suffice2}. 
Since $d_2\in \N d_1$ or $d_3\in \Zn d_1+\Zn d_2$ 
by assumption, 
(c) holds for some $2\leq m\leq 3$. 
Since $d_1\geq w_2$ and $d_2>w_3$, 
(d) holds for any $1\leq l\leq 3$. 
If $d$ belongs to $\N w_1$, 
then $d_1$, $d_2$ and $d_3$ 
belong to $\N w_1$. 
When this is the case, 
(a) and (b) are satisfied 
if we take $d$ to be $w_1$. 
Assume that 
$d$ does not belong to $\N w_1$. 
If the first part of (B) holds, 
then $d$ belongs to 
$\Zn w_1+\N w_3$ or $\Zn w_2+\Zn w_3$. 
In the first case, 
we have $d\geq w_3\geq w_2$, 
and so (b) holds for $l=2$. 
Since $d\geq w_1$ as mentioned, 
(b) holds for $l=1$ in the second case. 
The last part of (B) 
implies that (b) holds for $l=3$. 
Thus, (B) implies (b). 
Clearly, 
(A) implies (a). 
Therefore, 
we conclude that 
$(d_1,d_2,d_3)$ belongs to $|\E _3^{\w }|$ 
by Lemma~\ref{lem:suffice2}. 
\end{proof}

\section{Shestakov-Umirbaev reductions}
\label{sect:SUred}
\setcounter{equation}{0}

The goal of this section is to prove 
Theorems~\ref{thm:noSUred} and \ref{thm:ER}. 
To prove Theorem~\ref{thm:noSUred}, 
we use the generalized 
Shestakov-Umirbaev theory~\cite{SUineq}, \cite{tame3}. 
For the convenience of the reader, 
we give a short introduction to this theory. 
Assume that $n=3$. 
For $F,G\in \Aut _k\kx $, 
we say that the pair $(F,G)$ satisfies the 
{\it Shestakov-Umirbaev condition} for the weight $\w $ 
if the following conditions hold (cf.~\cite{tame3}).

\medskip

\noindent
(SU1) $g_1=f_1+af_3^2+cf_3$ and $g_2=f_2+bf_3$ 
for some $a,b,c\in k$, 
and $g_3-f_3$ belongs to $k[g_1,g_2]$. 

\smallskip 

\noindent
(SU2) $\deg _{\w }f_1\leq \deg _{\w }g_1$ 
and $\deg _{\w }f_2=\deg _{\w }g_2$. 

\smallskip 

\noindent
(SU3) $(g_1^{\w })^2\approx (g_2^{\w })^s$ 
for some odd number $s\geq 3$. 

\smallskip 

\noindent
(SU4) $\deg _{\w }f_3\leq \deg _{\w }g_1$, 
and $f_3^{\w }$ does not belong to 
$k[g_1^{\w }, g_2^{\w }]$. 

\smallskip 

\noindent
(SU5) 
$\deg _{\w }g_3<\deg _{\w }f_3$. 

\smallskip 

\noindent
(SU6) 
$\deg _{\w }g_3<\deg _{\w }g_1-\deg _{\w }g_2 
+\deg _{\w }dg_1\wedge dg_2$. 

\medskip

Here, 
we recall that $f_1\approx f_2$ denotes that 
$f_1$ and $f_2$ are linearly dependent over $k$ 
for each $f_1,f_2\in \kx $. 
For each $F\in \Aut _k\kx $ and $\sigma \in \mathfrak{S}_3$, 
we define 
$F_{\sigma }=(f_{\sigma (1)},f_{\sigma (2)},f_{\sigma (3)})$. 
We say that $F\in \Aut _k\kx $ admits a 
{\it Shestakov-Umirbaev reduction} 
for the weight $\w$ 
if there exist $\sigma \in \mathfrak{S}_3$ 
and $G\in \Aut _k\kx $ 
such that $(F_{\sigma },G_{\sigma })$ 
satisfies the Shestakov-Umirbaev condition 
for the weight $\w $.

The following theorem is the main result of \cite{tame3}.

\begin{thm}[{\cite[Theorem 2.1]{tame3}}]\label{thm:SU}
Assume that $k$ is a field of characteristic zero. 
If $\degw F>|\w |$ holds for 
$F\in \T _3(k)$ and $\w \in (\Gamma _+)^3$, 
then $F$ admits an elementary reduction 
or a Shestakov-Umirbaev reduction for the weight $\w $. 
\end{thm}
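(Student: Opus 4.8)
This is the weighted form of the Shestakov--Umirbaev theorem, and my plan is to reprove it by adapting the Shestakov--Umirbaev method \cite{SU} to weights, using the machinery assembled in the preceding sections together with the generalized Shestakov--Umirbaev inequality of \cite{SUineq}. Fix $F\in\T_3(k)$ with $\degw F>|\w|$ and assume $F$ admits no elementary reduction for $\w$; I must exhibit a Shestakov--Umirbaev reduction. Since $k$ is a field, $\T_3(k)$ is generated by affine and elementary automorphisms, so I would work with a factorisation of $F$ into affine and elementary factors of least possible length, normalised (as in \cite{SU}) so that affine and elementary factors alternate; the minimality is what will later force the strict inequalities (SU5) and (SU6), and the whole argument is organised by induction on the length of such a factorisation.

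The starting point is Theorem~\ref{thm:base autom}~(ii): since $\degw F>|\w|$, the initial forms $f_1^\w,f_2^\w,f_3^\w$ are algebraically dependent over $k$, so there is a $\w$-weighted-homogeneous minimal relation $\Phi(f_1^\w,f_2^\w,f_3^\w)=0$. I would classify $F$ by the shape of $\Phi$, following \cite{SU} but now over $\Gamma$. A Newton-polytope analysis of the leading part of $\Phi$ --- using Corollary~\ref{cor:indep deg} and the weighted van der Kulk input of Lemma~\ref{lem:jvdk} to control the two-variable subrelations --- singles out a distinguished index, which I rename so that $f_3$ is distinguished, and constants $a,b,c\in k$ such that the modified polynomials $g_1:=f_1+af_3^2+cf_3$, $g_2:=f_2+bf_3$ satisfy $(g_1^\w)^2\approx(g_2^\w)^s$ for an odd $s\ge 3$, together with $\degw f_1\le\degw g_1$, $\degw f_2=\degw g_2$, $\degw f_3\le\degw g_1$ and $f_3^\w\notin k[g_1^\w,g_2^\w]$. (In the degenerate sub-cases where $\Phi$ is already a two-variable relation, one of $a,b,c$ may be taken zero, or else a direct cancellation $f_i\mapsto f_i-cf_j^u$ is an elementary reduction, contradicting our assumption.) This realises the first part of (SU1) together with (SU2), (SU3) and (SU4).

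It then remains to produce the third new coordinate and to verify (SU5) and (SU6). Here one uses that $f_3^\w$ lies in the initial algebra $k[g_1,g_2]^\w$ --- the crux of the matter, not automatic --- while by (SU4) it does not lie in $k[g_1^\w,g_2^\w]$; so there is $q\in k[g_1,g_2]$ whose $\w$-initial form cancels that of $f_3$, and $g_3:=f_3+q$ has $\degw g_3<\degw f_3$, with $(g_1,g_2,g_3)$ (after undoing the renaming) an automorphism $G$ for which $(F_\sigma,G_\sigma)$ now meets (SU1) through (SU5). The one genuinely hard condition is (SU6), $\degw g_3<\degw g_1-\degw g_2+\degw\,dg_1\wedge dg_2$. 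This is exactly what the generalized Shestakov--Umirbaev inequality of \cite{SUineq} delivers: because $g_1^\w,g_2^\w$ are algebraically dependent one has $dg_1^\w\wedge dg_2^\w=0$, hence $\degw\,dg_1\wedge dg_2<\degw g_1+\degw g_2$ by \eqref{eq:bibun}, while $g_1,g_2$ are algebraically independent; the inequality of \cite{SUineq} bounds the $\w$-degree of any element of $k[g_1,g_2]$ whose initial form escapes $k[g_1^\w,g_2^\w]$ from below in terms of $\degw g_1,\degw g_2$ and $\degw\,dg_1\wedge dg_2$, and combining this bound --- applied to $f_3$ and the pair $g_1,g_2$ --- with the control on degrees coming from minimality of the factorisation yields the required strict inequality.

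The main obstacle is twofold. Foremost is the generalized weighted Shestakov--Umirbaev inequality itself: proving it requires a careful study of how $\degw$ behaves on a subalgebra $k[g,h]$ and on the exterior powers of $\Omega_{\kx/k}$ when $g^\w,h^\w$ are algebraically dependent, which is precisely where all the subtlety of cancellation of initial forms --- the reason $k[g,h]^\w$ can strictly contain $k[g^\w,h^\w]$ --- is concentrated; I would expect the bulk of the work, essentially the content of \cite{SUineq}, to live here. Second is the exhaustiveness of the case split on the shape of $\Phi$: one must check in every configuration that either an elementary reduction is visibly available or $f_3^\w\in k[g_1,g_2]^\w\setminus k[g_1^\w,g_2^\w]$ with all of (SU1)--(SU6) realisable, and the passage from $\Z$ to a general totally ordered group $\Gamma$ introduces extra sub-cases (for instance those governed by divisibility among the $w_i$) beyond the unweighted argument of \cite{SU}. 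The remaining ingredients --- the reduction to a minimal alternating factorisation, the choice of $a,b,c$ cancelling cross terms, and checking (SU1)--(SU5) --- are routine bookkeeping of the kind already carried out in Sections~\ref{sect:IP} and \ref{sect:mdeg}.
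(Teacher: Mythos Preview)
The paper does not prove this theorem at all: it is quoted verbatim as \cite[Theorem 2.1]{tame3} and introduced with ``The following theorem is the main result of \cite{tame3}.'' So there is no proof in the present paper to compare your proposal against; the theorem is imported wholesale from the earlier paper and then \emph{used} (together with Lemma~\ref{lem:noSUred}) to deduce Theorem~\ref{thm:noSUred}.

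As for your sketch on its own terms: the overall architecture you describe --- minimal factorisation, algebraic dependence of initial forms via Theorem~\ref{thm:base autom}~(ii), classification by the shape of the relation, construction of $g_1,g_2,g_3$, and the generalized Shestakov--Umirbaev inequality from \cite{SUineq} to secure (SU6) --- is indeed the skeleton of the argument in \cite{tame3}. But you should be aware that the actual proof there is substantially longer and more intricate than your outline suggests: the ``exhaustiveness of the case split'' that you flag as an obstacle is not a single Newton-polytope analysis but a lengthy classification into several reduction types, and the step you call ``the crux of the matter'' (showing $f_3^\w\in k[g_1,g_2]^\w$) is not a one-line observation but requires the full force of the degree estimates developed in \cite{SUineq} and \cite{tame3}. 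Your proposal is a fair high-level summary, but it would not serve as a proof without importing essentially all of \cite{tame3}; in the context of the present paper the correct move is simply to cite the result, which is what the author does.
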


Thanks to Theorem~\ref{thm:SU}, 
the proof of Theorem~\ref{thm:noSUred} 
is reduced to the proof of the following lemma.

\begin{lem}\label{lem:noSUred}
Assume that $k$ is a field of characteristic zero, 
and $\w $ is an element of $(\Gamma _+)^3$. 
Then, 
no element of $S(\w ,k)$ 
admits a Shestakov-Umirbaev 
reduction for the weight $\w $. 
\end{lem}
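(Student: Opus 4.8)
The plan is to argue by contradiction: suppose $F \in S(\w,k)$ admits a Shestakov-Umirbaev reduction for the weight $\w$, so there are $\sigma \in \mathfrak{S}_3$ and $G \in \Aut_k\kx$ such that $(F_\sigma, G_\sigma)$ satisfies (SU1) through (SU6). The defining features of $S(\w,k)$ are that $\degw F > |\w|$ and that $f_3 = \alpha x_3 + p$ with $\alpha \in k \sm \zs$, $p \in k[x_1,x_2]$, $\degw p \le w_3$; in particular $\degw f_3 = w_3$ and $f_3^\w = \alpha x_3 + p'$ for a suitable $p' \in k[x_1,x_2]$ with $\degw p' = w_3$ or $p' = 0$. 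The key structural fact is that $f_3$, being linear in $x_3$ over $k[x_1,x_2]$ with a unit leading coefficient, is a coordinate of $\kx$ over $k$ with $\mdegw$-minimal third component; and its $\w$-initial form $f_3^\w$ still depends on $x_3$ linearly. I expect this to clash with (SU4), which demands that $f_3^\w$ \emph{not} belong to $k[g_1^\w, g_2^\w]$, together with (SU1)--(SU2), which forces $g_1, g_2$ to be built from $f_1, f_2, f_3$ in a way that keeps them (in low degree) inside $k[x_1,x_2]$-type behaviour.

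The first step is to pin down which index gets permuted into the ``$f_3$-slot''. Since (SU3) requires $(g_1^\w)^2 \approx (g_2^\w)^s$ for odd $s \ge 3$, the polynomials $g_1^\w$ and $g_2^\w$ are algebraically dependent, hence (by Corollary~\ref{cor:indep deg}) $\degw g_1$ and $\degw g_2$ are $\Z$-dependent and both positive; moreover (SU6) together with (SU5) forces $\degw f_{\sigma(3)} > \degw g_{\sigma(3)} \ge 0$, so $\degw f_{\sigma(3)} > 0$. I would next observe, using (SU1), that $g_{\sigma(1)} - f_{\sigma(1)}$ and $g_{\sigma(2)} - f_{\sigma(2)}$ lie in $k[f_{\sigma(3)}]$, and $g_{\sigma(3)} - f_{\sigma(3)} \in k[g_{\sigma(1)}, g_{\sigma(2)}]$; combined with the fact that $F$ is an automorphism, this is the ``Shestakov-Umirbaev'' configuration. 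The crux is then to show $\sigma(3) \ne 3$ is impossible and $\sigma(3) = 3$ is also impossible.

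\textbf{Case $\sigma(3) = 3$.} Then $f_{\sigma(3)}^\w = f_3^\w = \alpha x_3 + p'$. By (SU2) and (SU1), $g_1^\w, g_2^\w$ are $\w$-initial forms of polynomials obtained from $f_1, f_2$ by adding $k[f_3]$-multiples, and since $\degw f_3 = w_3$ is the minimum possible value, the degree bookkeeping in (SU2)/(SU4) will force $g_1^\w$ and $g_2^\w$ to be algebraically independent of $x_3$ in a controlled way — more precisely I would show that $f_3^\w \in k[g_1^\w, g_2^\w]$ would follow, because $x_3$ appears in $f_3^\w$ with a unit coefficient and $f_1, f_2, f_3$ generate, making $k[g_1^\w, g_2^\w, f_3^\w]$ too large unless $f_3^\w$ is already a polynomial in $g_1^\w, g_2^\w$; this contradicts (SU4). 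Here I would lean on Corollary~\ref{cor:IP} and Theorem~\ref{thm:base autom} to control initial algebras.

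\textbf{Case $\sigma(3) \ne 3$.} Then one of $g_{\sigma(1)}, g_{\sigma(2)}$ equals, up to $k[f_{\sigma(3)}]$-correction, $f_3 = \alpha x_3 + p$, whose $\w$-initial form is $\alpha x_3 + p'$ of $\w$-degree $w_3$. But (SU3) forces $(g_1^\w)^2 \approx (g_2^\w)^s$ with $s \ge 3$ odd, so neither $g_1^\w$ nor $g_2^\w$ can be a degree-one-in-$x_3$ polynomial unless the other is a higher power of it — and $\alpha x_3 + p'$ is not a proper power of anything, nor is its square a proper odd power. I would make this precise by examining $\deg_{x_3}$ or by using that $f_3$ is a coordinate: its initial form $f_3^\w$, being $\alpha x_3 + p'$, is a coordinate-type polynomial and cannot satisfy the monomial-power constraints imposed by (SU3). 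The bound $\degw f_3 = w_3$ being minimal (Theorem~\ref{thm:base autom}(i)) then also contradicts (SU4)'s requirement $\degw f_{\sigma(3)} \le \degw g_1$ combined with (SU5), unless degrees collapse, which contradicts $\degw F > |\w|$.

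\textbf{Main obstacle.} The hard part will be the bookkeeping in Case $\sigma(3) = 3$: ruling out $f_3^\w \notin k[g_1^\w, g_2^\w]$ requires carefully tracking how the $\w$-initial forms of $g_1 = f_1 + af_3^2 + cf_3$ and $g_2 = f_2 + bf_3$ interact with $f_3^\w$, and one must use that the whole configuration comes from an automorphism (so $k[f_1^\w, f_2^\w, f_3^\w]$, when the initial forms are independent, fills $\kx$, and when dependent, the dependency is tightly constrained). I expect the argument to split on whether $\degw g_1 = \degw f_1$ or $\degw g_1 > \degw f_1$, and in the latter case on the cancellation pattern between $f_1^\w$ and $(f_3^\w)^2$; the condition $\degw p \le w_3$ and $\alpha \ne 0$ should be exactly what forbids the problematic cancellations.
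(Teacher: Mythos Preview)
Your proposal has a genuine gap, and the main obstacle you flag is in fact where the approach breaks down. In your Case $\sigma(3)=3$ you aim to \emph{contradict} (SU4) by showing $f_3^{\w}\in k[g_{\sigma(1)}^{\w},g_{\sigma(2)}^{\w}]$. But this is false: from (P7) one has $\delta=(1/2)\degw g_{\sigma(2)}<\degw f_3=w_3$, hence $\degw g_{\sigma(2)}<2w_3$ and $\deg_{x_3}g_{\sigma(2)}^{\w}\le 1$; the relation $(g_{\sigma(1)}^{\w})^2\approx(g_{\sigma(2)}^{\w})^s$ with $s\ge 3$ odd then forces $\deg_{x_3}g_{\sigma(1)}^{\w}=\deg_{x_3}g_{\sigma(2)}^{\w}=0$, so $g_{\sigma(1)}^{\w},g_{\sigma(2)}^{\w}\in k[x_1,x_2]$. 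Since $f_3^{\w}=\alpha x_3+p'$ genuinely depends on $x_3$, we have $f_3^{\w}\notin k[g_{\sigma(1)}^{\w},g_{\sigma(2)}^{\w}]$, and (SU4) is \emph{satisfied}, not violated. Your heuristic ``$k[g_1^{\w},g_2^{\w},f_3^{\w}]$ would be too large'' does not work because the initial algebra $\kx^{\w}=\kx$ is perfectly happy to be generated by two elements of $k[x_1,x_2]$ together with something linear in $x_3$.

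The paper's route is different: it contradicts (SU3), not (SU4), and the key tools you are missing are the consequences (P5)--(P7) of the weak SU condition (from \cite[Theorem~4.2]{tame3}) together with the weighted van~der~Kulk theorem (Lemma~\ref{lem:jvdk} and Theorem~\ref{thm:jvdk}). The case split is by $\sigma(1)$ rather than $\sigma(3)$. When $\sigma(1)=3$, (P7) gives $\degw f_{\sigma(2)}<\degw f_{\sigma(1)}=w_3$, so $f_{\sigma(2)}\in k[x_1,x_2]$; one then either builds an automorphism $(g_{\sigma(1)},g_{\sigma(2)})$ of $k[x_1,x_2]$ (if $\degw f_{\sigma(1)}=\degw g_{\sigma(1)}$) or, via (P5), an automorphism $(f_{\sigma(2)},f_{\sigma(3)})$ of $k[x_1,x_2]$, and Lemma~\ref{lem:jvdk} yields a relation $h_1^{\w}\approx(h_2^{\w})^u$ incompatible with (SU3) or with $4\degw f_{\sigma(3)}=3\degw f_{\sigma(2)}$. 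When $\sigma(1)\ne 3$, one shows (using the $\deg_{x_3}$ argument above, plus a computation that $f_{\sigma(3)}$ again has the form $\beta x_3+q$ with $\degw q\le w_3$) that $H=(g_{\sigma(1)},g_{\sigma(2)},f_{\sigma(3)})$ satisfies the hypotheses (a)--(d) of Theorem~\ref{thm:jvdk}; its conclusion $g_{\sigma(1)}^{\w}\approx(g_{\sigma(2)}^{\w})^u$ or $g_{\sigma(2)}^{\w}\approx(g_{\sigma(1)}^{\w})^u$ then contradicts (SU3). Without invoking (P5)--(P7) and the van~der~Kulk machinery, your degree bookkeeping cannot close either case.
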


We note that, 
if $(F,G)$ satisfies the Shestakov-Umirbaev condition 
for the weight $\w $, 
then $(F,G)$ 
satisfies the ``weak Shestakov-Umirbaev condition" 
for the weight $\w $, 
and has the following properties 
(cf.~\cite[Theorem 4.2]{tame3}). 
Here, we regard $\Gamma $ as a subgroup of 
$\Q \otimes _{\Z }\Gamma $ which has a structure of 
totally ordered additive group induced from $\Gamma $:

\medskip 

\noindent
{\rm (P1)} $(g_1^{\w })^2\approx (g_2^{\w })^s$ 
for some odd number $s\geq 3$. 
Hence, $\delta :=(1/2)\degw g_2$ belongs to $\Gamma $. 

\smallskip

\noindent
{\rm (P5)} If $\degw f_1<\degw g_1$, 
then $s=3$, $g_1^{\w }\approx (f_3^{\w })^2$, 
$\degw f_3=(3/2)\delta $ and 
$$
\degw f_1\geq \frac{5}{2}\delta +\degw dg_1\wedge dg_2. 
$$

\smallskip 

\noindent
{\rm (P6)} $\degw G<\degw F$. 

\smallskip 

\noindent
{\rm (P7)} $\degw f_2<\degw f_1$, $\degw f_3\leq \degw f_1$, 
and $\delta <\degw f_i\leq s\delta $ for $i=1,2,3$. 

\medskip

Now, 
let us prove Lemma~\ref{lem:noSUred} by contradiction. 
Suppose that 
$F$ admits a Shestakov-Umirbaev reduction 
for the weight $\w $ for some $F\in S(\w ,k)$. 
Then, 
there exist 
$\sigma \in \mathfrak{S}_3$ and $G\in \Aut _k\kx $ 
such that $(F_{\sigma },G_{\sigma })$ 
satisfies the Shestakov-Umirbaev condition 
for the weight $\w $. 
Moreover, 
we have $f_3=\alpha x_3+p$ 
for some $\alpha \in k^{\times }$ 
and $p\in k[x_1,x_2]$ with $\degw p\leq w_3$, 
and so $\degw f_3=w_3$.

First, 
we consider the case of $\sigma (1)=3$. 
In this case, we have 
$\degw f_{\sigma (1)}=\degw f_3=w_3$. 
Since $\degw f_{\sigma (1)}>\degw f_{\sigma (2)}$ by (P7), 
and $\degw f_{\sigma (2)}=\degw g_{\sigma (2)}$ by (SU2), 
it follows that 
$\degw f_{\sigma (2)}$ 
and $\degw g_{\sigma (2)}$ are less than $w_3$. 
Hence, 
$f_{\sigma (2)}$ and $g_{\sigma (2)}$ 
belong to $k[x_1,x_2]$.

When $\degw f_{\sigma (1)}=\degw g_{\sigma (1)}$, 
we have $\degw g_{\sigma (1)}=w_3$. 
Hence, 
$g_{\sigma (1)}-g_{\sigma (1)}^{\w }$ 
belongs to $k[x_1,x_2]$, 
since $\degw (g_{\sigma (1)}-g_{\sigma (1)}^{\w })<w_3$. 
By (SU3), 
$(g_{\sigma (1)}^{\w })^2\approx (g_{\sigma (2)}^{\w })^s$ 
holds for some odd number $s\geq 3$. 
Since $g_{\sigma (2)}$ belongs to $k[x_1,x_2]$, 
it follows that $g_{\sigma (1)}^{\w }$ 
also belongs to $k[x_1,x_2]$. 
Thus, 
$g_{\sigma (1)}$ belongs to $k[x_1,x_2]$. 
Therefore, 
we can define $G'\in \Aut _kk[x_1,x_2]$ 
by $G'=(g_{\sigma (1)},g_{\sigma (2)})$. 
Since $g_{\sigma (1)}^{\w }$ and $g_{\sigma (2)}^{\w }$ 
are algebraically dependent over $k$, 
we have $\degv G'>|\vv |$ 
by Theorem~\ref{thm:base autom} (ii), 
where $\vv :=(w_1,w_2)$. 
Hence, 
we have 
$g_{\sigma (1)}^{\w }\approx (g_{\sigma (2)}^{\w })^u$ or 
$g_{\sigma (2)}^{\w }\approx (g_{\sigma (1)}^{\w })^u$ 
for some $u\geq 1$ by Lemma~\ref{lem:jvdk}. 
This contradicts that 
$(g_{\sigma (1)}^{\w })^2\approx (g_{\sigma (2)}^{\w })^s$ 
with $s\geq 3$ an odd number.

When $\degw f_{\sigma (1)}\neq \degw g_{\sigma (1)}$, 
we have 
$\degw f_{\sigma (1)}<\degw g_{\sigma (1)}$ 
in view of (SU2). 
From (P5) and (SU2), 
it follows that 
$$
\degw f_{\sigma (3)}=\frac{3}{2}\delta 
=\frac{3}{2}\frac{1}{2}\degw g_{\sigma (2)}
=\frac{3}{4}\degw f_{\sigma (2)}, 
$$
and hence 
$4\degw f_{\sigma (3)}=3\degw f_{\sigma (2)}$. 
Thus, 
we get $\degw f_{\sigma (3)}<\degw f_{\sigma (2)}$. 
Since $\degw f_{\sigma (2)}<w_3$ as mentioned, 
it follows that $f_{\sigma (3)}$ belongs to $k[x_1,x_2]$. 
Hence, we can define $F'\in \Aut _kk[x_1,x_2]$ by 
$F'=(f_{\sigma (2)},f_{\sigma (3)})$. 
Since 
$$
w_3+\degv F'
=\degw f_{\sigma (1)}+\degv F'
=\degw F_{\sigma }>\degw G_{\sigma }\geq |\w |
=|\vv |+w_3 
$$
by (P6) and Theorem~\ref{thm:base autom} (i), 
we have $\degv F'>|\vv |$. 
Thus, 
we know by Lemma~\ref{lem:jvdk} that 
$f_{\sigma (2)}^{\w }\approx (f_{\sigma (3)}^{\w })^u$ or 
$f_{\sigma (3)}^{\w }\approx (f_{\sigma (2)}^{\w })^u$ for some 
$u\geq 1$. 
This contradicts 
that $4\degw f_{\sigma (3)}=3\degw f_{\sigma (2)}$.

Next, 
assume that $\sigma (1)\neq 3$. 
Due to (SU1), 
we can define $H\in \Aut _k\kx $ by 
$H=(g_{\sigma (1)},g_{\sigma (2)},f_{\sigma (3)})$. 
In the following, 
we show that $H$ and $\w $ 
satisfy the conditions (a) through (d) 
before Theorem~\ref{thm:jvdk}. 
Then, 
it follows that 
$g_{\sigma (1)}^{\w }\approx (g_{\sigma (2)}^{\w })^u$ or 
$g_{\sigma (2)}^{\w }\approx (g_{\sigma (1)}^{\w })^u$ for some 
$u\geq 1$ by Theorem~\ref{thm:jvdk} (i). 
Since 
$(g_{\sigma (1)}^{\w })^2\approx (g_{\sigma (2)}^{\w })^s$ 
with $s\geq 3$ an odd number, 
we are led to a contradiction.

Since 
$(1/2)\degw g_{\sigma (2)}=\delta <\degw f_3=w_3$ by (P7), 
we have $\degw g_{\sigma (2)}<2w_3$. 
This implies that 
$\deg _{x_3}g_{\sigma (2)}^{\w }\leq 1$. 
Since 
$(g_{\sigma (1)}^{\w })^2\approx (g_{\sigma (2)}^{\w })^s$ 
with $s\geq 3$ an odd number, 
it follows that 
$\deg _{x_3}g_{\sigma (1)}^{\w }=
\deg _{x_3}g_{\sigma (2)}^{\w }=0$. 
Hence, 
$g_{\sigma (1)}^{\w }$ and $g_{\sigma (2)}^{\w }$ 
belong to $k[x_1,x_2]$, 
proving (a). 
We show that 
$f_{\sigma (3)}=\beta x_3+q$ 
for some $\beta \in k^{\times }$ 
and $q\in k[x_1,x_2]$ with $\degw q\leq w_3$. 
Then, 
we get (c) and (d) immediately. 
Since 
$\degw g_{\sigma (3)}<\degw f_{\sigma (3)}$ 
by (SU5), 
and $\degw f_{\sigma (3)}=w_3$ by (c), 
we have 
\begin{align*}
&\degw g_{\sigma (1)}+\degw g_{\sigma (2)}
=\degw G-\degw g_{\sigma (3)} \\
&\quad >\degw G-\degw f_{\sigma (3)}
\geq |\w |-\degw f_{\sigma (3)}=w_1+w_2. 
\end{align*}
Hence, 
(b) is also proved.

Since $\sigma (3)\neq 1$, 
we have $\sigma (2)=3$ or $\sigma (3)=3$. 
Recall that $f_3=\alpha x_3+p$ 
for some $\alpha \in k^{\times }$ 
and $p\in k[x_1,x_2]$ with $\degw p\leq w_3$. 
Hence, 
the assertion is clear if $\sigma (3)=3$. 
Assume that $\sigma (2)=3$. 
Then, 
we have 
$\degw g_{\sigma (2)}
=\degw f_{\sigma (2)}
=\degw f_3=w_3$ 
by (SU2). 
Since $g_{\sigma (2)}^{\w }$ 
belongs to $k[x_1,x_2]$ as shown above, 
this implies that 
$g_{\sigma (2)}$ belongs to $k[x_1,x_2]$. 
By (SU1), 
there exists $b\in k$ such that 
$$
g_{\sigma (2)}=f_{\sigma (2)}+bf_{\sigma (3)}
=\alpha x_3+p+bf_{\sigma (3)}. 
$$
Since $g_{\sigma (2)}$ and $p$ belong to $k[x_1,x_2]$ 
and $\alpha \neq 0$, 
it follows that $b\neq 0$ and 
$$
f_{\sigma (3)}=-\alpha b^{-1}x_3+b^{-1}(g_{\sigma (2)}-p). 
$$
Since $g_{\sigma (2)}$ and $p$ are elements of $k[x_1,x_2]$ 
with $\degw g_{\sigma (2)}=w_3$ 
and $\degw p\leq w_3$, 
we see that $f_{\sigma (3)}$ has the required form. 
This completes the proof of Lemma~\ref{lem:noSUred}, 
and thereby completing the proof of Theorem~\ref{thm:noSUred}.

The rest of this section 
is devoted to the proof of Theorem~\ref{thm:ER}. 
To prove (ii) of this theorem, 
we need the following 
version of the Shestakov-Umirbaev inequality 
(see~\cite[Section 3]{tame3} for detail). 
Let $S=\{ f,g\} $ be a subset of $\kx $ 
such that $f$ and $g$ are 
algebraically independent over $k$, 
and $p$ a nonzero element of $k[S]$. 
Then, we can uniquely express 
$p=\sum _{i,j}c_{i,j}f^ig^j$, 
where $c_{i,j}\in k$ for each $i,j\in \Zn $. 
We define $\degw ^Sp$ to be the maximum among 
$\degw f^ig^j$ for $i,j\in \Zn $ with $c_{i,j}\neq 0$. 
We note that, 
if $p^{\w }$ does not belong to $k[f^{\w },g^{\w }]$, 
then $\degw ^Sp$ is greater than $\degw p$.

With the notation and assumption above, 
the following lemma holds 
(see \cite[Lemmas 3.2 (i) and 3.3 (ii)]{tame3} 
for the proof).

\begin{lem}\label{lem:SU ineq}
Assume that $k$ is a field of characteristic zero. 
If $\degw ^Sp$ is greater than $\degw p$, 
then there exist $l,m\in \N $ with $\gcd (l,m)=1$ 
such that $(g^{\w })^l\approx (f^{\w })^m$ and  
$$
\degw p\geq m\degw f-\degw f-\degw g+\degw df\wedge dg. 
$$ 
\end{lem}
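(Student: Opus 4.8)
The plan is to follow the two-step strategy behind the Shestakov--Umirbaev inequality: first extract the algebraic relation forced between $f^{\w}$ and $g^{\w}$, and then read off the degree inequality. Put $a=\degw f$ and $b=\degw g$; since $f,g$ are non-constant (being algebraically independent) and the weights are positive, $a,b\in\Gamma_+$. Write $p=q(f,g)$ with $q=\sum_{i,j}c_{i,j}y^iz^j\in k[y,z]$, and equip $k[y,z]$ with the weighting $\deg y=a$, $\deg z=b$. Then $\degw^S p=\deg_{(a,b)}q=:d$, and since no monomial of $q$ has $(a,b)$-degree exceeding $d$, the weighted-degree-$d$ homogeneous component of $p$ equals $\bar q(f^{\w},g^{\w})$, where $\bar q:=\sum_{ia+jb=d}c_{i,j}y^iz^j\ne 0$. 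As $\degw p<d$, this component vanishes, so $\bar q(f^{\w},g^{\w})=0$.

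\emph{The relation.} If $\bar q$ were a monomial, $\bar q(f^{\w},g^{\w})$ would be a nonzero element of the domain $\kx$; hence $\bar q$ involves at least two monomials and $f^{\w},g^{\w}$ are algebraically dependent over $k$. By Corollary~\ref{cor:indep deg} applied to $(f,g)$, the elements $a,b$ are linearly dependent over $\Z$, so there are coprime $l,m\in\N$ with $lb=ma$. The monomials of $\bar q$, all of $(a,b)$-degree $d$, form an arithmetic progression with step $(m,-l)$, whence $\bar q=y^{i_0}z^{j_0}P(y^m,z^l)$ for some nonzero binary form $P$ of positive degree, and therefore $P((f^{\w})^m,(g^{\w})^l)=0$ in $\kx$. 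Factoring $P$ into linear forms over an algebraic closure $\bar k$ and using that $\bar k[\x]$ is a domain, one of these factors yields $(f^{\w})^m=\mu(g^{\w})^l$ with $\mu\in\bar k$; since also $\mu=(f^{\w})^m/(g^{\w})^l$ lies in the field of fractions of $\kx$, we get $\mu\in k^{\times}$, so $(g^{\w})^l\approx(f^{\w})^m$. Moreover $y^m-\mu z^l$ is irreducible in $k[y,z]$ because $\gcd(l,m)=1$, so it generates the kernel of the substitution $k[y,z]\to\kx$, $y\mapsto f^{\w}$, $z\mapsto g^{\w}$ (a height-one prime of the UFD $k[y,z]$, as $k[f^{\w},g^{\w}]$ has transcendence degree one over $k$). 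In particular $\bar q$ is a nonzero multiple of $y^m-\mu z^l$, so $\deg_y q\ge\deg_y\bar q\ge m$.

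\emph{The inequality.} I would divide $q$ by the relation polynomial in $y$: $q=(y^m-\mu z^l)q_*+\rho$ with $q_*,\rho\in k[y,z]$ and $\deg_y\rho<m$. Since $\deg_y q\ge m$ we have $q_*\ne 0$, so $q_*(f,g)\ne 0$ ($f,g$ being algebraically independent); writing $h:=f^m-\mu g^l$ gives $p=h\,q_*(f,g)+\rho(f,g)$. Because $\deg_y\rho<m$, the $(a,b)$-weighted leading form of $\rho$ does not lie in the ideal $(y^m-\mu z^l)$, so if $\rho\ne 0$ it does not vanish at $(f^{\w},g^{\w})$, whence $\degw\rho(f,g)=\deg_{(a,b)}\rho$ with no cancellation. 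Differentiating, $dh\wedge dg=mf^{m-1}\,df\wedge dg$; since $\kx$ is a domain, (\ref{eq:prod deg}) applies to products (and extends to forms), and together with (\ref{eq:bibun}) this gives
\begin{equation*}
(m-1)a+\degw(df\wedge dg)=\degw(dh\wedge dg)\le\degw h+b ,
\end{equation*}
so $\degw h\ge m\degw f-\degw f-\degw g+\degw(df\wedge dg)=:T$, which is exactly the right-hand side of the asserted inequality. Now I would distinguish cases. If $\rho=0$ then $\degw p=\degw h+\degw q_*(f,g)\ge\degw h\ge T$, using $\degw q_*(f,g)\ge 0$. If $\rho\ne 0$ and $\degw(h\,q_*(f,g))\ne\degw\rho(f,g)$, then $\degw p=\max\{\degw(h\,q_*(f,g)),\degw\rho(f,g)\}\ge\degw(h\,q_*(f,g))\ge\degw h\ge T$. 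If $\rho\ne 0$, $\degw(h\,q_*(f,g))=\degw\rho(f,g)$, and the $\w$-initial forms of the two summands do not cancel, the same bound follows from Lemma~\ref{lem:IP}.

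The one remaining case is when $h\,q_*(f,g)$ and $\rho(f,g)$ have equal $\w$-degree and their $\w$-initial forms cancel, so that $\degw p$ drops strictly below that common value. Bounding this degree drop from below is the main obstacle, and is where the genuine content of the Shestakov--Umirbaev inequality lies; I would handle it by an induction on $\deg_y q$ in which the relation polynomial $y^m-\mu z^l$ is repeatedly divided out and its value $h$ is controlled by the differential estimate above, following the bookkeeping of \cite[Lemmas 3.2 (i) and 3.3 (ii)]{tame3}.
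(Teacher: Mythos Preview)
The paper does not give its own proof of this lemma; it simply cites \cite[Lemmas 3.2 (i) and 3.3 (ii)]{tame3}. Your sketch is a faithful outline of precisely that argument: the vanishing of $\bar q(f^{\w},g^{\w})$ forces the monomial relation $(g^{\w})^l\approx(f^{\w})^m$ with $\gcd(l,m)=1$, and the degree bound is then obtained by dividing out the relation polynomial $y^m-\mu z^l$ and invoking the differential estimate $\degw(dh\wedge dg)\le\degw h+\degw g$ for $h=f^m-\mu g^l$. The parts you write out are essentially correct (a small caveat: you use $\degw q_*(f,g)\ge 0$ and $a,b\in\Gamma_+$, which tacitly assumes $\w\in(\Gamma_+)^n$; this is not stated in the lemma but is the setting of \cite{tame3} and of every application in the present paper).

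Where your sketch is genuinely incomplete is exactly where you say it is: the case in which the $\w$-initial forms of $h\,q_*(f,g)$ and $\rho(f,g)$ cancel. This is not a side case but the heart of the inequality, and it cannot be handled by a single further division; the argument in \cite{tame3} proceeds by an induction that simultaneously tracks $\degw^S$ and the drop in $y$-degree after each division, with the differential term $\degw df\wedge dg$ entering at every step. Since you ultimately defer to the same reference the paper does, your proposal is on par with the paper's treatment, but you should be aware that your displayed cases do not by themselves yield the lemma.
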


Now, 
let us prove Theorem~\ref{thm:ER}. 
Let $k_0$ be the field of fractions of $k$. 
Then, 
we may regard $F$ as an element of $S(\w ,k_0)$. 
Hence, 
in proving (i), 
we may assume that $k$ is a field 
by replacing $k$ with $k_0$ if necessary. 
Similarly, 
since $\T _3(k)$ is regarded as a subset of $\T _3(k_0)$, 
we may assume that $k$ is a field in proving (ii). 
In both (i) and (ii), 
we may also assume that $f_3=x_3$ for the following reason. 
Since $k$ is a field, 
we can define $H\in \T _3(k)$ by $H=(x_1,x_2,f_3)$. 
Put $G=H^{-1}$. 
Then, 
we have $\mdegw G=\w $ by 
Theorem~\ref{thm:base autom} (iii), 
since $\mdegw H=\w $. 
By Theorem~\ref{thm:base autom} (ii) 
and Corollary~\ref{cor:IP} (i), 
this implies 
that $\degw G(f)=\deg _{\w _G}f$ 
for each $f\in \kx $. 
Since $\w _G=\mdegw G=\w $, 
it follows that 
$\degw G(f)=\degw f$ for each $f\in \kx $. 
Thus, 
we get $G\circ F\sim _{\w }F$. 
Therefore, 
by replacing $F$ with $G\circ F$ if necessary, 
we may assume that $f_3=x_3$.

First, 
we show (i). 
It suffices to construct $G\in \E _3^{\w }(\kappa )$ 
such that $g_3=x_3$ and $G\sim _{\w }F$. 
Assume that $f_1$ or $f_2$ belongs to $k[x_i,x_j]$ 
for some $1\leq i<j\leq 3$. 
Since both cases are similar, 
we only consider the case of $f_1$. 
If $(i,j)=(1,2)$, 
then the assertion follows from 
Proposition~\ref{prop:sc} (ii). 
Assume that $(i,j)\neq (1,2)$. 
Then, 
we have $j=3$. 
Since $f_1$ belongs to $k[x_i,x_3]$ and $f_3=x_3$, 
we may write 
$f_1=\alpha _1x_i+p_1$ and $f_2=\alpha _2x_l+p_2$, 
where $\alpha _1,\alpha _2\in k^{\times }$, 
$p_1\in k[x_3]$, 
$p_2\in k[x_i,x_3]$ 
and $l\in \{ 1,2\} $ with $l\neq i$. 
Define $p_1'\in \kappa [x_3]$ by $p_1'=0$ if $p_1=0$, 
and $p_1'=x_3^d$ if $d:=\deg p_1\geq 0$, 
and $p_2'\in \kappa [x_i,x_3]$ by $p_2'=0$ if $p_2=0$, 
and $p_2'=x_i^{u_i}x_3^{u_3}$ if $p_2\neq 0$, 
where $u_i,u_3\in \Zn $ are such that 
$\degw p_2=u_iw_i+u_3w_3$. 
Then, 
$G:=(x_i+p_1',x_l+p_2',x_3)$ 
is an element of $\E _3^{\w }(\kappa )$ 
such that $G\sim _{\w }F$.

Assume that $f_1$ and $f_2$ 
do not belong to $k[x_i,x_j]$ 
for any 
$1\leq i<j\leq 3$. 
Then, 
$d_i=\degw f_i$ 
is not less than $\max \{ w_1,w_2,w_3\} $ 
for $i=1,2$. 
By assumption, 
$d_i$ belongs to $\sum _{j\neq i}\Zn d_j$ 
for some $1\leq i\leq 3$. 
If $i=3$, 
then it follows that 
$d_l\leq d_3$ for $l=1$ or $l=2$. 
Since both cases are similar, 
we assume that $l=1$. 
Then, 
we have 
$\max \{ w_1,w_2,w_3\} \leq d_1\leq d_3=w_3$, 
and so $d_1=w_3$. 
Hence, 
we may write 
$f_1=\alpha _1x_3+p_1$, 
where $\alpha _1\in k^{\times }$, 
and $p_1\in k[x_1,x_2]$ is such that $\degw p_1\leq w_3$. 
Then, 
we have $k[f_1,f_2,f_3]=k[p_1,f_2,f_3]$ since $f_3=x_3$. 
By Proposition~\ref{prop:sc} (ii), 
there exists 
$G'=(g_1,g_2,x_3)\in \E _3^{\w }(\kappa )$ 
such that $G'\sim _{\w }(p_1,f_2,f_3)$. 
Then, 
we have $\degw g_1=\degw p_1\leq w_3=d_1$. 
Define $G\in \E _3(\kappa )$ 
by $G=G'$ if $\degw g_1=d_1$, 
and by $G=(g_1+x_3,g_2,x_3)$ if $\degw g_1<d_1$. 
Then, 
$G$ is an element of $\E _3^{\w }(\kappa )$ 
such that $G\sim _{\w }F$. 
Next, 
assume that $i=1$ or $i=2$. 
Since both cases are similar, 
we assume that $i=1$. 
Write $d_1=l_2d_2+l_3d_3=l_2d_2+l_3w_3$, 
where $l_2,l_3\in \Zn $. 
Recall that 
$\degw F>|\w |$ by the definition of $S(\w ,k)$. 
Hence, 
(b) of Theorem~\ref{thm:weighted degree} (i) 
holds for $I=J=\{ 1,2,3\} $. 
Since $(f_3^{\w })^{\vv }=x_3$ 
is divisible by $x_3$ for $\vv =0$, 
it follows that $I_0\cap \{ 1,2\} \neq \emptyset $. 
Hence, 
there exists $s\in \{ 1,2\} $ 
such that $d_2$ belongs to $\sum _{l\neq s}\Zn w_l$. 
Write $d_2=aw_r+bw_3$, 
where $a,b\in \Zn $ and $r\in \{ 1,2\} \sm \{ s\} $. 
Since $d_1$ and $d_2$ 
are at least $\max \{ w_1,w_2,w_3\} $, 
we have $d_1\geq w_r$ and $d_2\geq w_s$. 
Define $G\in \E _3(\kappa )$ by 
$$
G=\bigl(x_r+\alpha (x_s+\beta x_r^ax_3^b)^{l_2}x_3^{l_3},
x_s+\beta x_r^ax_3^b,x_3\bigr), 
$$
where $\alpha =1$ if $d_1>w_r$, 
and $\alpha =0$ if $d_1=w_r$, 
and where $\beta =1$ if $d_2>w_s$, 
and $\beta =0$ if $d_2=w_s$. 
Then, 
$G$ is an element of $\E _3^{\w }(\kappa )$ 
such that $G\sim _{\w }F$. 
This completes the proof of (i).

Finally, 
we show (ii). 
By Theorem~\ref{thm:noSUred}, 
$F$ admits an elementary reduction 
for the weight $\w $. 
Hence, 
we have $\degw (f_i-h)<\degw f_i$ 
for some $1\leq i\leq 3$ and 
$h\in k[f_{i_1},f_{i_2}]$, 
where $i_1,i_2\in \{ 1,2,3\} \sm \{ i\} $ 
are such that $i_1<i_2$. 
Then, 
$f_i^{\w }$ belongs to 
$k[f_{i_1},f_{i_2}]^{\w }$, 
since $f_i^{\w }=h^{\w }$. 
If $f_i^{\w }$ belongs to 
$k[f_{i_1}^{\w },f_{i_2}^{\w }]$, 
then $d_i$ belongs to 
$\Zn d_{i_1}+\Zn d_{i_2}$. 
Assume that $f_i^{\w }$ does not belong to 
$k[f_{i_1}^{\w },f_{i_2}^{\w }]$. 
Then, 
we have $k[f_{i_1},f_{i_2}]^{\w }\neq 
k[f_{i_1}^{\w },f_{i_2}^{\w }]$. 
Hence, 
$f_{i_1}^{\w }$ and $f_{i_2}^{\w }$ 
are algebraically dependent over $k$ 
by Corollary~\ref{cor:IP} (iii). 
If $i\neq 3$, 
then we have $i_2=3$. 
Since $f_3=x_3$, 
it follows that 
$f_{i_1}^{\w }$ belongs to 
$k[f_{i_2}^{\w }]=k[f_3^{\w }]=k[x_3]$. 
Hence, 
$d_{i_1}$ belongs to $\Zn d_{i_2}$. 
Assume that $i=3$. 
Then, 
there exists $h\in k[f_1,f_2]$ 
such that $h^{\w }=f_3^{\w }$. 
Since $f_3^{\w }$ does not belong to 
$k[f_1^{\w },f_2^{\w }]$ by assumption, 
$\degw ^Sh>\degw h$ holds for $S=\{ f_1,f_2\} $ 
as remarked before Lemma~\ref{lem:SU ineq}. 
By Lemma~\ref{lem:SU ineq}, 
there exist $l_1,l_2\in \N $ with $\gcd (l_1,l_2)=1$ 
such that 
$(f_2^{\w })^{l_1}\approx (f_1^{\w })^{l_2}$ 
and 
\begin{align*}
w_3=\degw h
\geq l_2d_1-d_1-d_2+\degw df_1\wedge df_2 
>(l_1l_2-l_1-l_2)\frac{1}{l_1}d_1, 
\end{align*}
where the last inequality is because 
$d_2=(l_2/l_1)d_1$ and $\degw df_1\wedge df_2>0$. 
Assume that $f_1^{\w }$ or $f_2^{\w }$ 
does not belong to $k[x_1,x_2]$. 
Then, 
we have $\deg _{x_3}f_j^{\w }=l_jd$ for $j=1,2$ 
for some $d\in \N $, 
since 
$l_1\deg _{x_3}f_2^{\w }=l_2\deg _{x_3}f_1^{\w }$ 
and $\gcd (l_1,l_2)=1$. 
Hence, 
we get 
$d_1=\degw f_1\geq l_1dw_3\geq l_1w_3$. 
By the preceding inequality, 
it follows that $l_1l_2-l_1-l_2<1$. 
Since $\gcd (l_1,l_2)=1$, 
this implies that $l_1=1$ or $l_2=1$. 
Thus, 
we know that 
$f_2^{\w }\approx (f_1^{\w })^{l_2}$ or 
$(f_2^{\w })^{l_1}\approx f_1^{\w }$. 
Therefore, 
$d_2$ belongs to $\Zn d_1$ or 
$d_1$ belongs to $\Zn d_2$. 
If $f_1^{\w }$ and $f_2^{\w }$ 
belong to $k[x_1,x_2]$, 
then the conditions 
(a) through (d) before Theorem~\ref{thm:jvdk} are fulfilled, 
since $\degw F>|\w |$ and $f_3=x_3$. 
Hence, 
we have 
$f_1^{\w }\approx (f_2^{\w })^u$ or 
$f_2^{\w }\approx (f_1^{\w })^u$ 
for some $u\geq 1$ 
by Theorem~\ref{thm:jvdk} (i). 
Therefore, 
$d_1$ belongs to $\Zn d_2$ or 
$d_2$ belongs to $\Zn d_1$. 
This completes the proof of (ii). 

\medskip

To conclude this paper, 
we mention Takurou Kanehira's master's thesis \cite{Kanehira}, 
where he generalized 
Kara\'s-Zygad\l o~\cite[Theorem 2.1]{KZ} 
by means of the generalized Shestakov-Umirbaev theory 
as follows 
(cf.~\cite[Theorem 3.1]{Kane}; 
see also \cite{Karastype} for 
further generalizations, 
and \cite{SC} and \cite{LiDu} for related results).

\begin{thm}[Kanehira]
\label{cor:Kanehira}
Assume that 
$k$ is a field of characteristic zero. 
Let $d_3\geq d_2>d_1\geq 3$ be integers 
such that $d_1$ and $d_2$ 
are mutually prime odd numbers. 
If there exist $\w \in \N ^3$ 
and $F\in \T_3(k)$ such that $\mdegw F=(d_1,d_2,d_3)$
and $\degw F>|\w |$, 
then $d_3$ belongs to $\Zn d_1 +\Zn d_2 $.
\end{thm}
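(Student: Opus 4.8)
The plan is to feed $F$ into the generalized Shestakov-Umirbaev reduction theorem (Theorem~\ref{thm:SU}) and then eliminate, by elementary arithmetic, every outcome except $d_3\in\Zn d_1+\Zn d_2$; the elementary-reduction part of the argument will run parallel to the proof of Theorem~\ref{thm:ER}(ii). We may assume $d_3>d_2$ (otherwise $d_3=d_2\in\Zn d_1+\Zn d_2$), so that $d_3>d_2>d_1\geq 3$ and $d_3$ is the unique maximal entry of the multidegree. Since $F\in\T_3(k)$ and $\degw F>|\w|$, Theorem~\ref{thm:SU} says that $F$ admits an elementary reduction or a Shestakov-Umirbaev reduction for the weight $\w$. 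The Shestakov-Umirbaev case is ruled out by a parity argument: if $(F_\sigma,G_\sigma)$ satisfies the Shestakov-Umirbaev condition, then (SU3) gives $(g_{\sigma(1)}^{\w})^2\approx(g_{\sigma(2)}^{\w})^s$ with $s\geq 3$ odd, hence $2\degw g_{\sigma(1)}=s\degw g_{\sigma(2)}$, and since $\gcd(2,s)=1$ this forces $\degw g_{\sigma(2)}$ to be even; by (SU2), $d_{\sigma(2)}=\degw f_{\sigma(2)}=\degw g_{\sigma(2)}$ is even, while (P7) gives $d_{\sigma(2)}<d_{\sigma(1)}\leq d_3$, so $d_{\sigma(2)}\in\{d_1,d_2\}$ is odd, a contradiction.

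So $F$ admits an elementary reduction: $\degw(f_i-h)<\degw f_i=d_i$ for some $i$ and some $h\in k[f_{i_1},f_{i_2}]$ with $\{i_1,i_2\}=\{1,2,3\}\setminus\{i\}$; consequently $h^{\w}=f_i^{\w}$ and $\degw h=d_i$. If $f_i^{\w}\in k[f_{i_1}^{\w},f_{i_2}^{\w}]$, then $d_i\in\Zn d_{i_1}+\Zn d_{i_2}$, and I would argue as follows: for $i=3$ this is exactly the desired conclusion; for $i=1$ it is impossible because $0<d_1<d_2<d_3$; and for $i=2$ it forces $d_2\in\Zn d_1\setminus\{0\}$, so $d_1\mid d_2$, contradicting $\gcd(d_1,d_2)=1$ together with $d_1\geq 3$.

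In the remaining subcase $f_i^{\w}\notin k[f_{i_1}^{\w},f_{i_2}^{\w}]$, Corollary~\ref{cor:IP}(iii) makes $f_{i_1}^{\w}$ and $f_{i_2}^{\w}$ algebraically dependent over $k$ and gives $\degw ^{S}h>\degw h$ for $S=\{f_{i_1},f_{i_2}\}$, so Lemma~\ref{lem:SU ineq} produces coprime $l,m\in\N$ with $(f_b^{\w})^l\approx(f_a^{\w})^m$ for a suitable labelling $\{a,b\}=\{i_1,i_2\}$ and
\[
d_i=\degw h\ \geq\ m\,\degw f_a-\degw f_a-\degw f_b+\degw df_a\wedge df_b\ >\ m\,d_a-d_a-d_b,
\]
where $\degw df_a\wedge df_b>0$ because $\w\in\N^3$ (the relevant nonzero Jacobian monomial has $\w$-degree at least $w_c+w_d\geq 2$). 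For $i=3$ we have $\{a,b\}=\{1,2\}$, and $\gcd(d_1,d_2)=1$ forces $m\,d_a=d_1d_2$, so $d_3\geq d_1d_2-d_1-d_2+1=(d_1-1)(d_2-1)$; since every integer $\geq(d_1-1)(d_2-1)$ is a non-negative integer combination of the coprime numbers $d_1$ and $d_2$, the conclusion follows.

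For $i\in\{1,2\}$ in this last subcase, exactly one index of $\{i_1,i_2\}$ equals $3$. Using $l\,\degw f_b=m\,\degw f_a$ to express $d_a,d_b$ through $l,m$ and a common factor, substituting into the displayed inequality and invoking $d_i<d_{i_1}$ and $d_i<d_{i_2}$, I would reduce everything to an integer inequality of the shape $2l+m>lm$. This forces the exponent attached to $f_3^{\w}$ --- which is $\min(l,m)$, since $d_3$ is the largest of the three degrees --- to lie in $\{1,2\}$: if it equals $1$, then $f_3^{\w}$ is a pure power of $f_1^{\w}$ or of $f_2^{\w}$, hence $d_3\in\Zn d_1$ or $d_3\in\Zn d_2$ and we are done; if it equals $2$, then the other exponent is coprime to $2$, hence odd, and the relation $(f_3^{\w})^2\approx(f_j^{\w})^{\mathrm{odd}}$ forces $d_j$ (with $j\in\{1,2\}$ the non-$3$ index) to be even, contradicting the oddness of $d_1$ and $d_2$. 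I expect this last paragraph --- keeping track of which of the two exponents lands on which initial form, and checking in every sub-case that the only surviving possibility is $\min(l,m)=1$ --- to be the main obstacle; everything else is a direct appeal to the quoted results.
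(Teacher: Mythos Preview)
The paper does not actually prove Theorem~\ref{cor:Kanehira}; it only quotes the result from Kanehira's thesis (and \cite{Kane}) and says it was obtained ``by means of the generalized Shestakov-Umirbaev theory.'' So there is no in-paper proof to compare against. Your outline is exactly the kind of argument one expects: apply Theorem~\ref{thm:SU}, kill the Shestakov--Umirbaev branch by parity via (SU3) and (P7), and in the elementary-reduction branch use Lemma~\ref{lem:SU ineq} together with the Sylvester--Frobenius bound. The $i=3$ case and the SU-reduction parity argument are completely correct as written.

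There is one genuine slip in the final paragraph. You write ``invoking $d_i<d_{i_1}$ and $d_i<d_{i_2}$,'' but for $i=2$ this would require $d_2<d_1$, which is false. What you actually have for both $i=1$ and $i=2$ is $d_i<d_3$; with the labeling $d_3=me$, $d_j=le$ (so $l<m$), this yields $lm-l-m<m$, i.e.\ $l(m-1)<2m$, rather than your stated $2l+m>lm$. Fortunately this weaker inequality is still enough: if $l\ge 3$ then $3(m-1)\le l(m-1)<2m$ forces $m<3<l<m$, absurd, so $l\in\{1,2\}$ as you need. From there your dichotomy is correct: $l=1$ gives $d_3\in\Zn d_j\subset\Zn d_1+\Zn d_2$, while $l=2$ gives $2d_3=md_j$ with $m$ odd, hence $d_j$ even, contradicting the oddness of $d_1,d_2$. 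With this correction the argument goes through; the ``main obstacle'' you anticipated dissolves once you use only $d_i<d_3$ and accept the slightly weaker inequality.
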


Because of this result, 
Kanehira studied the following problem 
and gave some partial results.

\begin{prob}[Kanehira]\rm 
Assume that 
$k$ is a field of characteristic zero. 
Find sufficient conditions on $\w \in \N ^3$ 
under which the following statement holds: 
$(d_1,d_2,d_3)$ belongs to $\mdegw \T _3(k)$ 
for any mutually prime odd numbers 
$$
d_1,d_2\in \bigcup _{1\leq i<j\leq 3}(w_i\Zn +w_j\Zn ),
$$ 
and $d_3\in \Zn d_1+\Zn d_2$ 
such that $3\leq d_1<d_2\leq d_3$ and $d_1+d_2+d_3>|\w |$. 
\end{prob}

The results presented in this paper 
may be applicable to such a problem.

\noindent
Department of Mathematics and Information Sciences\\ 
Tokyo Metropolitan University \\
1-1  Minami-Osawa, Hachioji \\
Tokyo 192-0397, Japan\\
kuroda@tmu.ac.jp


\begin{thebibliography}{00}


\bibitem{Asanuma}
T. Asanuma, 
On strongly invariant coefficient rings, 
Osaka J. Math. {\bf 11} (1974), 587--593. 


\bibitem{BD}
S. M. Bhatwadekar\ and\ A. K. Dutta, 
On residual variables and stably polynomial algebras, 
Comm. Algebra {\bf 21} (1993), 635--645. 


\bibitem{DHM}
H. Derksen, O. Hadas\ and\ L. Makar-Limanov, Newton polytopes of invariants of additive group actions, J. Pure Appl. Algebra {\bf 156} (2001), no.~2-3, 187--197. 


\bibitem{Dicks}
W. Dicks, 
Automorphisms of the polynomial ring in two variables, 
Publ.\ Sec.\ Mat.\ Univ.\ Aut\`onoma Barcelona {\bf 27} (1983), 
155--162. 


\bibitem{Kane}
E.~Edo, 
T.~Kanehira, 
M.~Kara\'s and 
S.~Kuroda, 
Separability of wild automorphisms of a polynomial ring, 
Transform. Groups {\bf 18} (2013), 81--96. 


\bibitem{Jung}H.~Jung, 
\"Uber ganze birationale Transformationen der Ebene, 
J.\ Reine Angew.\ Math.\ {\bf 184} (1942), 161--174. 


\bibitem{345}
M. Kara\'s, There is no tame automorphism of $\Bbb C\sp 3$ 
with multidegree $(3,4,5)$, 
Proc. Amer. Math. Soc. {\bf 139} (2011), 
769--775. 


\bibitem{Kanehira}
T. Kanehira,
Weighted multidegrees of tame automorphisms 
of a polynomial ring in three variables (Japanese),
Master's Thesis, Tokyo Metropolitan University, 
February, 2012.


\bibitem{KZ}
M. Kara\'s\ and\ J. Zygad\l o, 
On multidegrees of tame and wild automorphisms of $\Bbb C\sp 3$, 
J. Pure Appl. Algebra {\bf 215} (2011), 
2843--2846. 


\bibitem{Kulk}W.~van der Kulk, 
On polynomial rings in two variables, 
Nieuw Arch.\ Wisk. (3) {\bf 1} (1953), 33--41. 



\bibitem{SAGBI}
S. Kuroda, The infiniteness of the SAGBI bases for certain invariant rings, Osaka J. Math. {\bf 39} (2002), 665--680. 


\bibitem{SUineq}S.~Kuroda, 
A generalization of the Shestakov-Umirbaev inequality, 
J. Math. Soc. Japan {\bf 60} (2008), 495--510. 


\bibitem{tame3}S. Kuroda, 
Shestakov-Umirbaev reductions 
and Nagata's conjecture on a polynomial 
automorphism, Tohoku Math.\ J.\ {\bf 62} (2010), 75--115.


\bibitem{Karastype}S.~Kuroda, 
On the Kara\'s type theorems for the multidegrees 
of polynomial automorphisms, 
arXiv:math.AC/1303.3703v1. 



\bibitem{stbinv}S.~Kuroda, 
Initial forms of stable invariants for additive group actions, 
arXiv:math.AC/1304.0313. 


\bibitem{LiDu}
J.~Li and X.~Du, 
Multidegrees of tame automorphisms with one
prime number, 
arXiv:math.AC/1204.0930


\bibitem{ML}
L.~Makar-Limanov, 
On Automorphisms of Certain Algebras (Russian), 
PhD Thesis, Moscow, 1970. 


\bibitem{MRSY}
L. Makar-Limanov, 
P. van Rossum, 
V. Shpilrain and J.-T. Yu, 
The stable equivalence and cancellation problems, 
Comment. Math. Helv. {\bf 79} (2004), 341--349.


\bibitem{Matsumura}
H. Matsumura, 
Commutative ring theory, 
translated from the Japanese by M. Reid, 
second edition, Cambridge Studies 
in Advanced Mathematics, 8, Cambridge Univ. Press, 
Cambridge, 1989. 



\bibitem{Comb}
V. Shpilrain\ and\ J.-T. Yu, 
Affine varieties with equivalent cylinders, 
J. Algebra {\bf 251} (2002), no.~1, 295--307. 


\bibitem{Nagata}M.~Nagata, 
On Automorphism Group of $k[x,y]$, 
Lectures in Mathematics, Department of Mathematics, 
Kyoto University, Vol.\ 5, 
Kinokuniya Book-Store Co.\ Ltd., Tokyo, 1972. 



\bibitem{oda}
T. Oda, {\it Convex bodies and algebraic geometry}, 
translated from the Japanese, 
Ergebnisse der Mathematik und ihrer Grenzgebiete (3), 
15, Springer, Berlin, 1988. 



\bibitem{SU'}
I.~Shestakov and U.~Umirbaev, 
Poisson brackets and two-generated 
subalgebras of rings of polynomials, 
J.\ Amer.\ Math.\ Soc.\ {\bf 17} (2003), 181--196. 


\bibitem{SU}
I.~Shestakov and U.~Umirbaev, 
The tame and the wild automorphisms of polynomial rings 
in three variables, 
J.\ Amer.\ Math.\ Soc.\ {\bf 17} (2003), 197--227. 


\bibitem{SC}
X.~Sun and Y.~Chen, 
Multidegrees of Tame Automorphisms in Dimension Three, 
Publ.\ Res.\ Inst.\ Math.\ Sci.\ {\bf 48} (2012), 
129--137. 

\end{thebibliography}
\end{document}